\definecolor{amethyst}{rgb}{0.6, 0.4, 0.8}
\newtheorem{thm}{Theorem}[section]
\newtheorem{cor}[thm]{Corollary}
\newtheorem{lem}[thm]{Lemma}
\newtheorem{prop}[thm]{Proposition}
\newtheorem{exmp}[thm]{Example}
\newtheorem{defn}[thm]{Definition}
\newtheorem{rem}[thm]{Remark}
\newtheorem{deff}[thm]{Definition}
\theoremstyle{definition}
\numberwithin{equation}{section}
\newtheorem{question}{Question}[]
\newcommand{\C}{\mathbb{C}}
\newcommand{\R}{\mathbb{R}}
\newcommand{\supp}{\operatorname{supp}}
\newcommand{\Z}{\mathbb{Z}}
\newcommand{\ccdot}{\,\cdot\,}
\newcommand{\s}{\hspace{0.5pt}}
\definecolor{armygreen}{rgb}{0.29, 0.33, 0.13}
\definecolor{ao(english)}{rgb}{0.0, 0.5, 0.0}
\newcommand{\abs}[1]{\lvert #1 \rvert} 
\newcommand{\norm}[1]{\lVert #1 \rVert}
\def\hat{\widehat}
\def\tilde{\widetilde}
\def \bfo {\begin {eqnarray*} }
\def \efo {\end {eqnarray*} }
\def \ba {\begin {eqnarray*} }
\def \ea {\end {eqnarray*} }
\def \beq {\begin {eqnarray}}
\def \eeq {\end {eqnarray}}
\def \supp {\hbox{supp }}
\def \dist {\hbox{dist}}
\def \det {\hbox{det}}
\def \p {\partial}
\def\hat{\widehat}
\def\tilde{\widetilde}
\def \bfo {\begin {eqnarray*} }
\def \efo {\end {eqnarray*} }
\def \ba {\begin {eqnarray*} }
\def \ea {\end {eqnarray*} }
\def \beq {\begin {eqnarray}}
\def \eeq {\end {eqnarray}}
\def \supp {\hbox{supp }}
\def \dist {\hbox{dist}}
\def \det {\hbox{det}}
\def \p {\partial}
\begin{document}

 \title[Linearized Calder\'on problem]{Linearized Calder\'on problem and exponentially accurate quasimodes for analytic manifolds}

\author[Krupchyk]{Katya Krupchyk}

\address
        {K. Krupchyk, Department of Mathematics\\
University of California, Irvine\\
CA 92697-3875, USA }

\email{katya.krupchyk@uci.edu}

\author[Liimatainen]{Tony Liimatainen}

\address
       {T. Liimatainen, Department of Mathematics and Statistics\\
       University of Jyv\"as\-ky\-l\"a, Jyv\"askyl\"a\\
       FI-40014, Finland}
\email{tony.t.liimatainen@jyu.fi}

\author[Salo]{Mikko Salo}

\address
       {M. Salo, Department of Mathematics and Statistics\\
       University of Jyv\"askyl\"a, Jyv\"askyl\"a\\
       FI-40014, Finland}
\email{mikko.j.salo@jyu.fi}

\begin{abstract}
% We study the linearized anisotropic Calder\'on problem on a compact Riemannian manifold of dimension $\ge 3$ with boundary which amounts to showing that products of pairs of harmonic functions form a complete set in the space of integrable functions on the manifold.  Assuming that the manifold is transversally anisotropic with the transversal manifold being real analytic and satisfying certain conditions related to the geometry of pairs of intersecting geodesics, we solve the linearized anisotropic Calderon problem.  Our result does not require the assumption of the injectivity of the geodesic X-ray transform on the transversal manifold. Crucial ingredients in the proof of our result are the construction of Gaussian beam quasimodes along non-tangential geodesics on the transversal manifold, with exponentially small errors, as well as the FBI transform characterization of the analytic wave front.

In this article we study the linearized anisotropic Calder\'on problem on a compact Riemannian manifold with boundary. This problem amounts to showing that products of pairs of harmonic functions of the manifold form a complete set. We assume that the manifold is transversally anisotropic and that the transversal manifold is real analytic and satisfies a geometric condition related to the geometry of pairs of intersecting geodesics. In this case, we solve the linearized anisotropic Calder\'on problem. The geometric condition does not involve the injectivity of the geodesic X-ray transform. Crucial ingredients in the proof of our result are the construction of Gaussian beam quasimodes on the transversal manifold, with exponentially small errors, as well as the FBI transform characterization of the analytic wave front set.
% 
% 
% We assume that the manifold is transversally anisotropic and that the transversal manifold being real analytic and satisfies a condition related to the geometry of pairs of intersecting geodesics, we solve the linearized anisotropic Calderon problem.
% 
% 
% We assume that the manifold is transversally anisotropic. Under this assumption, it is known~\cite{DKuLLS_2018} that one can recover transversal singularities from the boundary measurements at points which satisfy a certain geometric condition. The geometric condition does not involve the geodesic X-ray transform. 
%  We extend the results of~\cite{DKuLLS_2018} by additionally assuming that the transversal manifold is analytic.  In this case, we obtain a recovery result of transversal analytic singularities. If additionally all the points satisfy the geometric condition, we in fact obtain a uniqueness result. To prove these results, we construct exponentially accurate quasimodes on general real analytic manifolds with boundary.

\end{abstract}

\maketitle

\section{Introduction and statement of results}

The inverse conductivity problem posed by Calder\'on \cite{Calderon_1980} asks to determine the electrical conductivity of a medium from voltage and current measurements on its boundary. This problem is the mathematical model of Electrical Impedance (or Resistivity) Tomography, an imaging method with applications in seismic and medical imaging. It is also one of the most fundamental models of inverse boundary value problems for elliptic partial differential equations. For these reasons both the theoretical and applied aspects of the Calder\'on problem have been under intense study. We refer to the survey \cite{Uhlmann} for more information and references.

In this article we are interested in the case where the electrical conductivity of the medium is \emph{anisotropic}, i.e.\ depends on direction. This can be modelled by a matrix conductivity coefficient, or in  geometric terms by having a resistivity coefficient given by a Riemannian metric $g$ on a compact manifold $M$ with smooth boundary. There are many variants of this problem. One of them is the (geometric) Calder\'on problem for a Schr\"odinger equation: given a known compact Riemannian manifold $(M,g)$ with smooth boundary and an unknown potential $q \in C^{\infty}(M)$, determine $q$ from the knowledge of the Cauchy data on $\p M$ of solutions of the Schr\"odinger equation 
\[
(-\Delta_g + q)u = 0 \text{ in $M$}.
\]
Here $-\Delta_g$ is the Laplace-Beltrami operator. This geometric  Calder\'on problem is solved in  \cite{GuillarmouTzou_2011} when $\dim(M) = 2$.  The problem is open in general when $\dim(M) \geq 3$ with only partial results available. In particular, the unique determination of $q$ was obtained in \cite{Sylvester_Uhlmann_1987} in the Euclidean setting, in \cite{Isozaki_2004} for hyperbolic manifolds, and in \cite{LeeUhlmann}, \cite{Kohn_Vogelius_1984} in the real analytic setting. Going beyond these settings, the geometric Calder\'on problem was only solved in the case when  $(M,g)$ is CTA (conformally transversally anisotropic, see Definition \ref{def_cta} below) and under the assumption that the geodesic X-ray transform on the transversal manifold is injective  \cite{DKSU_2009, DKuLS_2016}.

The linearized version (at $q=0$) of the above problem is also of interest, since methods for the linearized problem often give insight to the original problem. In our case, the linearized problem reduces to the following simple question asking whether products of pairs of harmonic functions form a complete set in $L^1(M)$:

\begin{question} \label{q_linearized}
Let $(M,g)$ be a compact oriented Riemannian manifold with smooth boundary. If $f \in L^{\infty}(M)$ satisfies 
\[
\int_M f u_1 u_2 \,dV_g = 0
\]
for all $u_j \in L^2(M)$ with $\Delta_g u_j = 0$ in $M$, $j=1,2$, is it true that $f \equiv 0$?
\end{question}

The methods of \cite{GuillarmouTzou_2011, DKSU_2009, DKuLS_2016} give a positive answer to Question \ref{q_linearized} when $\dim(M) = 2$, or when $\dim(M) \geq 3$ and $(M,g)$ is CTA with the transversal manifold having injective geodesic X-ray transform. There have been recent attempts to improve these results when $\dim(M) \geq 3$. In \cite{GuillarmouSaloTzou_2019}, it is proved that Question \ref{q_linearized} has a positive answer when $(M,g)$ is a complex K\"ahler manifold with sufficiently many holomorphic functions. The article \cite{DKuLLS_2018} establishes a recovery of singularities result: if $(M,g)$ is transversally anisotropic and the transversal manifold satisfies a certain geometric condition, one can recover transversal singularities of $f$. In a related work~\cite{LLLS19}, it is proved that on a general transversally anisotropic manifold products of sets of four (instead of pairs) of harmonic functions form a complete set in $L^1(M)$. See also \cite{DKSjU_2009},  \cite{Sjostrand_Uhlmann_2016} for  the linearized Calder\'on problem with partial data in the Euclidean setting. 

In this article we extend the result of \cite{DKuLLS_2018} and show that if the transversal manifold is additionally real-analytic, Question \ref{q_linearized} has a positive answer (i.e.\ one can recover $f\in L^\infty(M)$ completely, not just some of its singularities). 

Let us proceed to state our results. To that end, let us first  recall the following definitions, see \cite{DKSU_2009}, \cite{DKuLS_2016}. 
\begin{defn} \label{def_cta}
Let $(M,g)$ be a smooth compact oriented Riemannian manifold of dimension $n\ge 3$ with smooth boundary $\p M$.
\begin{itemize}
\item[(i)] $(M,g)$ is called transversally anisotropic if $(M,g)\subset \subset (T,g)$ where $T=\R\times M_0^{\mathrm{int}}$, $g=e\oplus g_0$, $(\R,e)$ is the Euclidean real line, and $(M_0,g_0)$ is a smooth compact $(n-1)$--dimensional manifold with smooth boundary, called the transversal manifold. 

\item[(ii)] $(M,g)$ is called conformally transversally anisotropic (CTA) if $(M,cg)$ is transversally anisotropic, for some  positive function $c\in C^\infty(M)$. 

%\item[(iii)] $(M,g)$ is called admissible if $(M,g)$ is CTA and the transversal manifold $(M_0,g_0)$ is simple, meaning that for any $p\in M_0$, the exponential map $\exp_p$ with its maximal domain of definition in $T_p M_0$ is a diffeomorphism onto $M_0$, and $\p M_0$ is strictly convex.
\end{itemize}
\end{defn}
Here and in what follows  $M_0^{\text{int}}=M_0\setminus\p M_0$ stands for the interior of $M_0$.

Let $(M,g)$ be transversally anisotropic of dimension $n\ge 3$ with a transversal manifold $(M_0,g_0)$. Next we need some definitions related to the transversal manifold $(M_0,g_0)$.  Following \cite{DKuLS_2016}, we say that a geodesic $\gamma:[-T_1,T_2]\to M_0$, $0<T_1,T_2<\infty$,  is nontangential if $\gamma(-T_1), \gamma(T_2)\in \p M_0$, $\gamma(t)\in M_0^{\text{int}}$ for all $-T_1<t<T_2$, and $\dot{\gamma}(-T_1)$, $\dot{\gamma}(T_2)$ are nontangential vectors on $\p M_0$.  Following \cite{DKuLLS_2018}, we have the following definition. 
\begin{defn} 
\label{def_admissible}
We say that  $(x_0',\xi_0')\in S^*M_0^{\mathrm{int}}$ is generated by an admissible pair of geodesics, if 
there are two nontangential unit speed geodesics 
\[
\gamma_1:[-T_1,T_2]\to M_0, \quad \gamma_2:[-S_1,S_2]\to M_0,
\] 
$0<T_1,T_2, S_1,S_2<\infty$, such that 
\begin{itemize}
\item[(i)] $\gamma_1(0)=\gamma_2(0)=x_0'$,
\item[(ii)] $\dot{\gamma}_1(0)+\dot{\gamma}_2(0)=t_0\xi_0'$,
for some $0<t_0<2$, where $\xi_0'$ is understood as an element of $T_{x_0}M_0^{\mathrm{int}}$ by the Riemannian  duality, 
\item[(iii)] $\gamma_1$, $\gamma_2$ do not have self-intersections at the point $x_0'$, and $x_0'$ is the only point of their intersections, i.e.  
\begin{align*}
\gamma_1(t)=x_0' \Leftrightarrow t=0,\quad \gamma_2(s)=x_0' \Leftrightarrow s=0,\\
\gamma_1(t)=\gamma_2(s) \Rightarrow \gamma_1(t)=\gamma_2(s)=x_0'.
\end{align*}
\end{itemize}
\end{defn}

Let $f\in L^\infty(M)$ and let us extend $f\in L^\infty(M)$ by zero to $(\R\times M_0)\setminus M$. Writing $x=(x_1,x')$ where $x_1\in \R$, and $x'$ are local coordinates $M_0$,  we let 
\[
\hat f(\lambda, x')=\int_{-\infty}^\infty e^{-i\lambda x_1} f(x_1,x') \,dx_1, \quad \lambda\in \R, 
\]
be the Fourier transform of $f$ with respect to $x_1$.  We have for each  $\lambda\in \R$ that $\hat f(\lambda, \ccdot)\in L^\infty(M_0)\cap \mathcal{E}'(M_0^{\mathrm{int}})$.

When $X$ is a real analytic open manifold and $u\in \mathcal{D}'(X)$, we let $WF_a(u) \subset T^*X\setminus\{0\}$ stand for the analytic wave front set of $u$, see \cite[Definition 6.1]{Sjostrand_Analytic}, \cite[Sections 8.5, 9.3]{Hormander_bookI}. The set $WF_a(u)\subset T^*X \setminus\{0\}$ is closed conic and we have 
\[
\pi(WF_a(u))=\text{singsupp}_a(u),
\]
where $\pi: T^*X\to X$, $(x,\xi)\mapsto x$, is the natural projection and $\text{singsupp}_a(u)$ is the analytic singular support of $u$, i.e. the smallest closed set such that $u$ is real analytic in the complement. In particular, 
$WF_a(u)=\emptyset$ if and only if $u$ is real analytic on $X$.

We have the following analytic microlocal result, which is an analog of Theorem 1.1 in \cite{DKuLLS_2018}, established in the $C^{\infty}$--case. 
\begin{thm}
\label{thm_main_2}
Let $(M,g)$ be a transversally anisotropic manifold of dimension $n\ge 3$ with transversal manifold $(M_0,g_0)$, and assume that $M_0^{\mathrm{int}}$ and $g_0|_{M_0^{\mathrm{int}}}$ are real analytic. Assume furthermore  that $f\in L^\infty(M)$ satisfies
\begin{equation}
\label{eq_int_orthog}
\int_M fu_1 u_2 \,dV_g=0,
\end{equation}
for all $u_j\in L^2(M)$ with $-\Delta_g u_j=0$ in $M^{\mathrm{int}}$. Let $(x'_0,\xi'_0)\in S^*M_0^{\mathrm{int}}$ be generated by an admissible pair of geodesics. Then
for any $\lambda\in \R$, one has 
\[
(x'_0,\xi'_0)\notin WF_a(\hat f(\lambda,\ccdot) )\subset T^*M_0^{\mathrm{int}}\setminus\{0\}. 
\]
\end{thm}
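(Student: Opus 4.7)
The plan is to test the identity (\ref{eq_int_orthog}) against carefully engineered pairs of harmonic functions on $M$ and convert the resulting orthogonality into exponential decay of an FBI transform of $\hat f(\lambda, \ccdot)$ at $(x_0', \xi_0')$. On the cylinder $T = \R \times M_0^{\mathrm{int}}$, the product structure $g = e \oplus g_0$ makes any function $u_j(x_1,x') = e^{s_j x_1} w_j(x')$ harmonic once $(-\Delta_{g_0} - s_j^2) w_j = 0$ on $M_0^{\mathrm{int}}$. Choosing $s_1 = \tau - i\lambda/2$ and $s_2 = -\tau - i\lambda/2$ so that $s_1 + s_2 = -i\lambda$, and using that $f$ extends by zero to $T$, Fubini turns (\ref{eq_int_orthog}) into
\[
\int_{M_0} \hat f(\lambda, x')\, w_1(x') w_2(x') \, dV_{g_0}(x') = 0, \qquad \tau \ge \tau_0.
\]

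The key construction is $w_j = v_j + r_j$, where $v_j$ is an exponentially accurate Gaussian beam quasimode concentrated along $\gamma_j$ and $r_j$ a small corrector. The WKB ansatz $v_j = e^{i\tau \Phi_j} \sum_{k=0}^{N} \tau^{-k} a_{j,k}$ uses a complex phase $\Phi_j$ with $\Phi_j|_{\gamma_j} = 0$, $d\Phi_j|_{\gamma_j} = \dot{\gamma}_j^{\flat}$, and $\Im \Phi_j$ positive definite transversally, while the amplitudes $a_{j,k}$ solve a recursive system of transport equations along $\gamma_j$. The real analyticity of $(M_0^{\mathrm{int}}, g_0)$ lets one solve these transport equations by real analytic amplitudes on a fixed complex tubular neighbourhood of $\gamma_j$ and establish analytic Cauchy-type bounds $\|a_{j,k}\|_\infty \le C^{k+1} k!$; the optimal truncation $N \sim c\tau$ then yields $\|(-\Delta_{g_0} - s_j^2) v_j\|_{L^2(M_0)} \le C e^{-c\tau}$. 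A standard Carleman solvability estimate for $-\Delta_{g_0} - s_j^2$, with polynomial gain in $\tau$, produces $r_j$ satisfying $(-\Delta_{g_0} - s_j^2)(v_j + r_j) = 0$ in $M_0^{\mathrm{int}}$ and $\|r_j\|_{L^2(M_0)} = O(e^{-c'\tau})$ for some $c' > 0$.

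By condition (ii) of Definition \ref{def_admissible}, the sum of phases satisfies $d(\Phi_1 + \Phi_2)(x_0') = (\dot{\gamma}_1(0) + \dot{\gamma}_2(0))^{\flat} = t_0 \xi_0'$ and the Hessian of $\Im(\Phi_1 + \Phi_2)$ at $x_0'$ is positive definite; by condition (iii), $v_1 v_2$ is exponentially small outside any neighbourhood of $x_0'$ because one of $\Im \Phi_1$, $\Im \Phi_2$ is bounded below by a positive constant there. Hence $v_1 v_2$ has, up to an elliptic amplitude, the form of a Sj\"ostrand FBI kernel at $(x_0', t_0 \xi_0')$ with large parameter $\tau$, and by openness of admissibility the same construction works in a neighbourhood. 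Substituting $w_j = v_j + r_j$ in the integral identity and absorbing the remainders using $\hat f(\lambda, \ccdot) \in L^\infty(M_0)$ yields
\[
\bigl| T \hat f(\lambda, \ccdot)(x, \xi; \tau) \bigr| \le C e^{-c\tau}
\]
for all $(x,\xi)$ near $(x_0', t_0 \xi_0')$, which by the FBI characterization of the analytic wave front set means $(x_0', t_0 \xi_0') \notin WF_a(\hat f(\lambda, \ccdot))$; since $WF_a$ is conic and $t_0 > 0$, this is equivalent to the desired conclusion.

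The principal technical obstacle is the construction of Gaussian beam quasimodes with \emph{exponentially} small, rather than merely polynomially small, error on a real analytic manifold. The smooth WKB construction gives only polynomial residuals, and the analytic upgrade requires uniform analytic control of the amplitudes $a_{j,k}$ on a fixed complex tubular neighbourhood of $\gamma_j$ so that the optimal truncation $N \sim c\tau$ is available, together with analytic control of $v_j$ near $\p M_0$ compatible with the Carleman solvability step so that the corrector $r_j$ does not destroy the exponential gain.
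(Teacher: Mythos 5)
Your overall outline is correct and matches the paper's strategy quite closely: build Gaussian beam quasimodes with exponentially small error along the two geodesics, use admissibility to localize the product near $x_0'$, substitute into the orthogonality identity, and read off exponential decay of an FBI transform of $\hat f(\lambda,\cdot)$ to conclude $(x_0',\xi_0')\notin WF_a$. The eikonal/transport analysis with Cauchy bounds $\lvert a_k\rvert\le C^{k+1}k^k$ and optimal truncation $N\sim c\tau$, and the observation that $d(\Phi_1+\Phi_2)(x_0')=t_0\xi_0'$ with positive-definite transversal Hessian, are all in line with the paper.

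However, there is a genuine gap at the solvability step, and it is not a side detail. You propose to upgrade the transversal quasimode $v_j$ to an \emph{exact} solution $w_j=v_j+r_j$ of $(-\Delta_{g_0}-s_j^2)w_j=0$ in $M_0^{\mathrm{int}}$, invoking ``a standard Carleman solvability estimate for $-\Delta_{g_0}-s_j^2$.'' No such estimate is available: the transversal manifold $(M_0,g_0)$ carries no distinguished limiting Carleman weight, and for real or nearly-real $s_j$ (e.g.\ your own choice $s_1=\tau$, $s_2=-\tau$ when $\lambda=0$) the operator $-\Delta_{g_0}-\tau^2$ has no useful resolvent bound at all --- $\tau^2$ may sit on, or arbitrarily near, the Dirichlet spectrum of $\Delta_{g_0}$ on any exhausting compact, so no polynomial gain in $\tau$ is possible. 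Even for $\lambda\ne 0$, to get a resolvent bound you would be relying not on a Carleman estimate but on the imaginary part of $s_j^2$, which for $s_j=\pm\tau-i\lambda/2$ is only of size $\lvert\lambda\rvert\tau$ and degenerates as $\lambda\to 0$; that makes the argument non-uniform in $\lambda$, and the theorem is claimed for all $\lambda\in\R$.

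The paper does something structurally different here, and this is precisely why it works: it never solves the transversal Helmholtz equation exactly. Instead, the remainder $r_j$ is a function of \emph{both} variables $(x_1,x')$ on the full manifold $M$, and it is produced by the Carleman solvability result (Proposition~\ref{prop_solvability}) for the conjugated Laplacian $e^{x_1/h}(-h^2\Delta_g)e^{-x_1/h}$ on $M$, where $\phi(x)=x_1$ is a limiting Carleman weight furnished by the transversally anisotropic product structure. Concretely, $u_1=e^{-s_1x_1}(v_1(x')+r_1(x))$ and $u_2=e^{s_2x_1}(v_2(x')+r_2(x))$ with $s_1=\tfrac1h+i\lambda$, $s_2=\tfrac1h$ (note $s_2$ is purely real, which would make your transversal resolvent approach fail outright), and the Carleman estimate gives $\lVert r_j\rVert_{L^2(M)}\le\frac{C}{h}\cdot O(e^{-1/(Ch)})$. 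This is the device that converts the exponentially accurate \emph{quasimodes} on $M_0$ into exact \emph{harmonic functions} on $M$ with exponentially small corrections, and it is essential to the proof; replacing it by a transversal solvability claim is where your proposal breaks down.

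Two further, smaller points. First, the FBI transform in Sj\"ostrand's characterization pairs against $\overline{u(x)}$, so the exponential decay you obtain directly concerns $WF_a(\overline{\hat f}(\lambda,\cdot))=WF_a(\hat{\overline f}(-\lambda,\cdot))$; you then need the observation that \eqref{eq_int_orthog} also holds for $\overline f$ and that $\lambda$ is arbitrary to get the statement for $\hat f(\lambda,\cdot)$. Second, to fill out an actual FBI neighborhood of $(x_0',\xi_0')$ one needs a real-analytic \emph{family} of Gaussian beams parametrized by $\alpha$ near $(x_0',\xi_0')$, not just a single pair; you gesture at ``openness of admissibility,'' which is correct (Lemma~\ref{lem_geodesic_2} in the paper), but the uniformity of the estimates over this family is what makes Definition~\ref{def_wave_front_set} applicable and should be stated explicitly.
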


Theorem \ref{thm_main_2} implies the following global result, which gives a positive answer to Question \ref{q_linearized} under suitable  geometric assumptions.
 
\begin{thm}
\label{thm_main}
Let $(M,g)$ be a transversally anisotropic manifold of dimension $n\ge 3$  and assume that the transversal manifold $(M_0,g_0)$ is connected, $M_0^{\mathrm{int}}$ as well as  $g_0$ in $M_0^{\mathrm{int}}$ are real analytic.  Assume that every point $(x'_0,\xi'_0)\in S^*M_0^{\mathrm{int}}$ is generated by an admissible pair of geodesics. 
Moreover, assume that $f\in L^\infty(M)$ satisfies  \eqref{eq_int_orthog} 
for all $u_j\in L^2(M)$ with $-\Delta_g u_j=0$ in $M^{\mathrm{int}}$.  Then $f=0$ in $M$. 
\end{thm}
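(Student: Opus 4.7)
The strategy is to deduce the global vanishing of $f$ from the microlocal result of Theorem~\ref{thm_main_2} by combining the hypothesis that \emph{every} covector in $S^*M_0^{\mathrm{int}}$ is generated by an admissible pair of geodesics with the identity principle for real analytic functions and Fourier inversion in the Euclidean variable.

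First, I would fix $\lambda\in\R$ and consider the partial Fourier transform $\hat f(\lambda,\ccdot)\in L^\infty(M_0)\cap \mathcal{E}'(M_0^{\mathrm{int}})$ defined in the introduction. By Theorem~\ref{thm_main_2} and the assumption that every $(x_0',\xi_0')\in S^*M_0^{\mathrm{int}}$ is generated by an admissible pair of geodesics, one obtains
\[
WF_a(\hat f(\lambda,\ccdot))\cap S^*M_0^{\mathrm{int}}=\emptyset.
\]
Since the analytic wave front set is a closed conic subset of $T^*M_0^{\mathrm{int}}\setminus\{0\}$, this in turn gives $WF_a(\hat f(\lambda,\ccdot))=\emptyset$. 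By the characterization of the analytic wave front set, $\hat f(\lambda,\ccdot)$ is real analytic on the real analytic manifold $M_0^{\mathrm{int}}$.

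Next, I would exploit that $f$ is supported in $M$. By Definition~\ref{def_cta}(i), $M\subset\subset \R\times M_0^{\mathrm{int}}$, so the projection of $M$ onto $M_0$ is a compact subset $K\subset M_0^{\mathrm{int}}$. Because $f$ was extended by zero outside $M$, one has $\hat f(\lambda,x')=0$ for every $x'\in M_0^{\mathrm{int}}\setminus K$. Since $M_0^{\mathrm{int}}$ is connected (this is where connectedness of $M_0$ is used, noting that $M_0^{\mathrm{int}}$ is dense in $M_0$ and any boundary collar preserves connectedness) and $M_0^{\mathrm{int}}\setminus K$ is a nonempty open subset, the identity principle for real analytic functions forces $\hat f(\lambda,\ccdot)\equiv 0$ on $M_0^{\mathrm{int}}$, and hence everywhere.

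As this conclusion holds for every $\lambda\in\R$, Fourier inversion in $x_1$ yields $f(x_1,x')=0$ for a.e.\ $(x_1,x')\in\R\times M_0$, and therefore $f=0$ in $M$. The only step that requires any real care is verifying that $M_0^{\mathrm{int}}$ is indeed connected and that the complement $M_0^{\mathrm{int}}\setminus K$ is nonempty so that the identity theorem applies; both are immediate from the transversally anisotropic hypothesis together with the connectedness assumption on $M_0$. The substantive content of the theorem is entirely contained in Theorem~\ref{thm_main_2}, which has been granted.
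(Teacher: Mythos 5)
Your proof is correct and follows essentially the same route as the paper: invoke Theorem~\ref{thm_main_2} at every covector in $S^*M_0^{\mathrm{int}}$ to deduce $WF_a(\hat f(\lambda,\ccdot))=\emptyset$, hence real analyticity, then use compact support of $\hat f(\lambda,\ccdot)$ in $M_0^{\mathrm{int}}$ together with connectedness and the identity theorem to conclude $\hat f(\lambda,\ccdot)\equiv 0$, and finish by Fourier inversion. The paper states the argument more tersely but with the same ingredients; the extra details you supply (conicity of $WF_a$, that the projection of $M$ is a compact subset of $M_0^{\mathrm{int}}$ so its complement is a nonempty open set on which $\hat f(\lambda,\ccdot)$ vanishes) are exactly the points the paper leaves implicit.
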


\begin{rem}
Note that while $(M_0^{\mathrm{int}},g_0)$ is real analytic, Theorem \ref{thm_main} does not follow from the existing results in the real analytic setting, as it corresponds to deforming the zero potential by an $L^\infty$ perturbation. 
\end{rem}

\begin{rem}
In Theorems \ref{thm_main_2} and \ref{thm_main} while $M^{\mathrm{int}}$ is real analytic, the boundary $\p M$ need not be real analytic.  
\end{rem}

As  the following example shows, there exist transversally anisotropic manifolds $(M,g)$ with a transversal manifold $(M_0,g_0)$ satisfying the geometric conditions of Theorem \ref{thm_main} and with a non-invertible geodesic X-ray transform. Therefore, the geometric Calder\'on problem is still open on such manifolds while our Theorem \ref{thm_main} gives a positive solution to the corresponding linearized problem.

\begin{exmp}
\label{ex_main}
Let $M_0 = \mathbb{S}^1 \times [0,a]$, $a > 0$, be a cylinder with its usual flat metric $g_0$.  The geodesics on $M_0$ are straight lines, circular cross sections, and helices that wind around the cylinder. The geodesic X-ray transform
 is not invertible, since the kernel contains functions of the form $f(e^{it}, s) = h(s)$ where $h \in C^{\infty}_0((0,a))$ integrates to zero over $[0,a]$.  However, it is shown in Appendix \ref{app_example} that every point $(x'_0,\xi'_0)\in S^*M_0^{\mathrm{int}}$ is generated by an admissible pair of geodesics. 
\end{exmp}

It is established in  \cite[Lemma 3.1]{DKuLLS_2018} that if $(M_0,g_0)$ satisfies the strict Stefanov--Uhlmann regularity condition at $(x'_0,\xi'_0)\in S^*M_0^{\mathrm{int}}$, which we now proceed to recall, then $(x'_0,\xi'_0)$ is generated by an admissible pair of geodesics.  
\begin{defn}
The transversal manifold $(M_0,g_0)$ satisfies  the strict Stefanov--Uhlmann regularity condition at $(x_0',\xi_0')\in S^*M_0^{\mathrm{int}}$ if there exists $\eta'\in S_{x'_0}^*M_0^{\mathrm{int}}$ such that ${g_0}(\xi_0',\eta')=0$ and such that the following holds: let $\gamma_{x'_0,\eta'}:[-T_1,T_2]\to M_0$, $0<T_1,T_2<\infty$, be the geodesic with $\gamma_{x'_0,\eta'}(0)=x'_0$, $\dot{\gamma}_{x'_0,\eta'}=\eta'$. We have 
\begin{itemize}
\item[(i)] $\gamma_{x'_0,\eta'}$ is nontangential,
\item[(ii)] $\gamma_{x'_0,\eta'}$ contains no points conjugate to $x'_0$,
\item[(iii)] $\gamma_{x'_0,\eta'}$ does not self-intersect for any time $t\in [-T_1,T_2]$.  
\end{itemize}
 \end{defn}
Hence, if  a transversally anisotropic manifold $(M,g)$ is such that the transversal manifold $(M_0,g_0)$ satisfies the strict Stefanov--Uhlmann regularity condition at every point of $S^*M_0^{\mathrm{int}}$ with $M_0^{\mathrm{int}}$ and $g_0|_{M_0^{\mathrm{int}}}$ real analytic, and $(M_0,g_0)$ is connected, then Theorem \ref{thm_main} holds. 

As the following examples demonstrate, there are transversally anisotropic manifolds $(M,g)$ with a transversal manifold $(M_0,g_0)$ satisfying the geometric condition of Theorem \ref{thm_main}, and with an invertible geodesic X-ray transform. Thus, for such manifolds  $(M,g)$, Theorem \ref{thm_main} also follows from \cite{DKSU_2009}, \cite{DKuLS_2016}. 

\begin{exmp}
Let $(M_0,g_0)$ be a \emph{simple} manifold, i.e.\ a compact simply connected manifold with strictly convex boundary so that no geodesic has conjugate points. Then $(M_0,g_0)$ satisfies the strict Stefanov--Uhlmann regularity condition at any point of $S^*M_0^{\mathrm{int}}$ and thus also the geometric condition in Theorem \ref{thm_main}. Note that in this case $(M,g)$ is admissible in the sense of \cite{DKSU_2009}, and Theorem \ref{thm_main} would also follow from \cite{DKSU_2009}.
\end{exmp}

\begin{exmp}
Let $\mathbb{S}^3\subset \R^4$ be the unit sphere and let $\mu$ be a geodesic arc from the north pole to the south pole of the sphere. Let $M_0$ be the closure of a neighborhood of $\mu$. It is established in \cite{DKuLLS_2018}  that the manifold $M_0$ satisfies the strict Stefanov--Uhlmann regularity condition at each point of $ S^*M_0^\mathrm{int}$. Notice also that the manifold $M_0$ contains conjugate points, so that it is not simple. However, the geodesic X-ray transform on $(M_0,g_0)$ is injective by \cite{Stefanov_Uhlmann_2008}, and Theorem \ref{thm_main} would therefore also follow from \cite{DKuLS_2016}.
\end{exmp}

\begin{rem} We would like to remark that the strict Stefanov-Uhlmann condition is not satisfied for $(M_0,g_0)$ of Example \ref{ex_main} since for any $(x_0', \xi_0') \in S^* M_0^{\mathrm{int}}$ with $\xi_0'$ pointing in the direction of the $[0,a]$ factor, the orthogonal geodesics never reach $\p M_0$. 
\end{rem}

The proof of Theorem \ref{thm_main_2} depends crucially on the construction of Gaussian beam quasimodes along nontangential geodesics on $M_0$, with exponentially small errors, as stated in the following result. Before stating the result, let us recall from \cite[Chapter 1]{Sjostrand_Analytic} the notion of a classical analytic symbol. Let $V \subset \C^n$ be an open set. We say that $a(x;h)=\sum_{k=0}^\infty h^k a_k(x)$ is a (formal) classical analytic symbol in $V$  if  $a_k\in \text{Hol}(V)$, $k=0,1,2,\dots$, and for every $\tilde V\subset\subset V$, there exists $C=C_{\tilde V}>0$ such that 
\begin{equation}
\label{eq_4_2}
|a_k(x)|\le C^{k+1}k^k, \quad x\in \tilde  V, 
\end{equation}
$ k=0, 1,2,\dots$. The classical analytic symbol $a(x;h)$ is said to be elliptic if $a_0\ne 0$. 

We have the following essentially well known result,  see \cite{Sjostrand_1975} and \cite{Sjostrand_Analytic}, and see also  \cite{Babich_96} for a sketch of the proof.  Notice that here our quasimode construction is performed along the entire geodesic segment contrary to the standard constructions in a neighborhood of a point, see \cite{Dencker_Sj_Zworski}.
\begin{thm}
\label{thm_main_1}
Let $(X,g)$ be a compact Riemannian manifold of dimension $n\ge 2$ with smooth boundary, contained in a real analytic open manifold $(\hat X,g)$ of the same dimension with $g$ real analytic in $\hat X$.
Let 
$\gamma:[-T_1,T_2]\to X$, $0<T_1,T_2<\infty$,  be a unit speed non-tangential geodesic in $X$, and let $\lambda\in \R$. There is a family of $C^\infty$  functions $v(x;h)$ on $X$, $0<h\le 1$, and $C>0$ such that $\supp(v(\ccdot;h))$ is confined to a small neighborhood of $\gamma([-T_1,T_2])$ and 
\begin{equation}
\label{eq_prop_gaussian_1}
\| (-h^2\Delta_g-(hs)^2)v\|_{L^2(X)}=\mathcal{O}(e^{-\frac{1}{Ch}}), \quad \|v\|_{L^2(X)}\asymp 1, 
\end{equation}
as $h\to 0$.  Here $s=\frac{1}{h}+i\lambda$. The local structure of the family $v(x\s ;h)$ is as follows: let $p\in \gamma([-T_1, T_2])$ and let $t_1<\dots<t_{N_p}$ be the times in $(-T_1,T_2)$ when $\gamma(t_l)=p$, $l=1,\dots, N_p$. In a sufficiently small neighborhood $V$ of a point $p\in \gamma([-T_1, T_2])$, we have 
\[
v|_{V}=v^{(1)}+\dots+v^{(N_p)}, 
\]
where each $v^{(l)}$ has the form 
\[
v^{(l)}(x;h)= h^{-\frac{(n-1)}{4}}e^{is \varphi^{(l)}(x)}a^{(l)}(x;h).
\]
Here $\varphi=\varphi^{(l)}$ is real analytic in $V$ satisfying for $t$ near $t_l$, 
\begin{equation}
\label{eq_prop_gaussian_1_phase_100}
\varphi(\gamma(t))=t, \quad \nabla \varphi(\gamma(t))=\dot{\gamma}(t), \quad \emph{\text{Im}}\, (\nabla^2\varphi(\gamma(t)))\ge 0, \quad \emph{\text{Im}}\, (\nabla^2\varphi)|_{ \dot{\gamma}(t)^\perp}> 0, 
\end{equation}
and $a^{(l)}$ is an elliptic classical analytic symbol in a complex neighborhood of $p$. 
\end{thm}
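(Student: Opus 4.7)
The plan is to carry out the Gaussian beam construction in the analytic category, upgrading the usual $O(h^\infty)$ error bound of \cite{Dencker_Sj_Zworski} to exponential accuracy by realizing the amplitude as a classical analytic symbol in the sense of \cite[Chapter~1]{Sjostrand_Analytic} and then truncating optimally. The construction proceeds through four steps: analytic phase, analytic amplitude, optimal truncation, and global assembly.

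I would first work in Fermi coordinates $(t,y)\in(-T_1-\delta, T_2+\delta)\times B(0,\epsilon)\subset \R\times\R^{n-1}$ attached to an extension of $\gamma$ in $\hat X$. Since $g$ is real analytic near $\gamma([-T_1,T_2])$, these coordinates are real analytic and $g = dt^2 + g_{\alpha\beta}(t,y) dy^\alpha dy^\beta$ with $g_{\alpha\beta}(t,0) = \delta_{\alpha\beta}$. I seek a holomorphic phase $\varphi$ on a complex neighborhood $V$ of $[-T_1,T_2]\times\{0\}$ solving the eikonal equation $|d\varphi|_g^2 = 1$ with $\varphi(t,0) = t$. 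Taylor expanding $\varphi(t,y) = t + \tfrac{1}{2}\langle H(t)y,y\rangle + O(|y|^3)$, the Hessian $H(t)$ satisfies a matrix Riccati equation and admits a global complex symmetric solution with $\mathrm{Im}\,H(t)|_{\dot\gamma^\perp} > 0$ throughout $[-T_1,T_2]$, a classical result of Babich and Ralston. The higher Taylor coefficients in $y$ solve linear ODEs along $\gamma$ with real analytic coefficients and are therefore analytic, and the resulting series converges in a fixed $V$ by Cauchy--Kovalevskaya applied to the noncharacteristic Hamilton--Jacobi equation.

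With $\varphi$ in hand, a direct calculation gives
\[
(-h^2\Delta_g - (hs)^2)(e^{is\varphi}a) = -ish^2 e^{is\varphi}\bigl[Ta - \tfrac{i}{s}\Delta_g a\bigr], \qquad Tu := 2\langle d\varphi, du\rangle_g + (\Delta_g\varphi)u,
\]
where the leading symbol vanishes by the eikonal equation. Writing $a(x;h) = \sum_{k\ge 0} h^k a_k(x)$ and expanding $\tfrac{1}{s} = h(1+i\lambda h)^{-1}$, matching powers of $h$ yields recursive transport equations $Ta_k = F_k(a_0,\dots,a_{k-1})$, each a first-order complex linear ODE along the (complexified) characteristic flow of $\varphi$, solvable in $V$ with analytic data. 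The central technical point, and the one I regard as the main obstacle, is to show that the holomorphic representatives of $a_k$ satisfy the classical analytic symbol bound $|a_k(x)| \le C^{k+1}k^k$ uniformly on a fixed $\tilde V\subset\subset V$. Following the method of nested polydiscs from \cite{Sjostrand_1975, Sjostrand_Analytic}, one integrates the transport ODEs against the Hamiltonian flow and applies Cauchy's inequality with analyticity radius shrinking by $\sim 1/k$ at step $k$; the extra difficulty compared with the standard local case is to propagate these estimates uniformly along the whole segment $[-T_1,T_2]$ rather than in a neighborhood of a single point, and this is where nontangentiality of $\gamma$ enters to fix a uniform complex tube of holomorphy for $\varphi$ (cf.\ the sketch in \cite{Babich_96}).

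Once $|a_k|\le C^{k+1}k^k$ is established, I take the optimally truncated realization $a_N(x;h) := \sum_{k=0}^{N(h)} h^k a_k(x)$ with $N(h) = \lfloor 1/(eCh)\rfloor$; Stirling's formula then gives
\[
\bigl\|(-h^2\Delta_g - (hs)^2)(e^{is\varphi} h^{-(n-1)/4} a_N)\bigr\|_{L^\infty(\tilde V)} = O(e^{-1/(C'h)}).
\]
To confine the support, I multiply by a cutoff $\chi(y/\delta)$; since $\mathrm{Im}\,\nabla^2\varphi|_{\dot\gamma^\perp} > 0$ gives $|e^{is\varphi}|\le e^{-c|y|^2/h}$, the commutator of the differential operator with the cutoff is bounded pointwise by $e^{-c\delta^2/h}$ and contributes only an exponentially small $L^2$ error. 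Pushing forward from the abstract $(t,y)$-tube to $\hat X$ and restricting to $X$ gives $v$ with support in a small neighborhood of $\gamma([-T_1,T_2])$; at a physical self-intersection point $p = \gamma(t_1) = \cdots = \gamma(t_{N_p})$ the pushforward is automatically the sum $v^{(1)}+\cdots+v^{(N_p)}$ described in the statement, each summand carrying its own phase $\varphi^{(l)}$ coming from the value $t_l$. The normalization $\|v\|_{L^2(X)}\asymp 1$ then follows from a direct Gaussian integral in $y$ (giving $\asymp h^{(n-1)/2}$ against the prefactor $h^{-(n-1)/2}$) combined with the observation that distinct branches at a self-intersection carry phases with distinct real gradients $\dot\gamma(t_l)$, so in $L^2$ they are essentially orthogonal and no cancellation in the total norm occurs.
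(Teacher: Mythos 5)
Your overall strategy agrees with the paper's in all the essential respects: Fermi coordinates along the extended geodesic, an exact holomorphic solution of the eikonal equation, an amplitude realized as a classical analytic symbol via Sj\"ostrand's nested-neighborhood estimates adapted to a tube around the whole geodesic segment, an optimal truncation at $N(h)\sim 1/(heC)$ to turn the symbol estimates $|a_k|\le C^{k+1}k^k$ into an $\mathcal{O}(e^{-1/(Ch)})$ error, a Gaussian cutoff, and gluing at self-intersection points. The one place where you deviate substantively is the phase, and that is also where your argument has a soft spot. You propose to get $\varphi$ from the Riccati ODE for the complex Hessian $H(t)$ plus a claim that the remaining Taylor coefficients sum to a holomorphic function in a \emph{fixed} complex tube by Cauchy--Kovalevskaya. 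Cauchy--Kovalevskaya, however, gives holomorphy only in a tube of unspecified and in general shrinking width around the initial Cauchy surface; it does not by itself yield a uniform tube around all of $\gamma([-T_1,T_2])$, and the paper itself emphasizes this (see the remark preceding Step~1 in Section~\ref{sec_quasimodes}). The mechanism that actually delivers a uniform tube is the positivity $\operatorname{Im}\varphi''\geq 0$: the paper constructs the flow-out Lagrangian $\Lambda$ of an isotropic $\Lambda'$ with $\operatorname{Im}\psi''_{yy}(0)>0$, propagates the positivity of the tangent planes $\Lambda_t$ along the real Hamiltonian flow, and deduces transversality of $\Lambda_t$ to the vertical fiber for all $t\in I$, which is exactly what is needed to realize $\Lambda=\Lambda_\varphi$ by a generating function on a fixed neighborhood $\text{neigh}(I\times B,\mathbb{C}^n)$; your Riccati-equation picture encodes the same positivity fact, but you would still need to turn it into uniform holomorphy of the full $\varphi$ (not just its $y$-Hessian), e.g.\ by gluing local Cauchy--Kovalevskaya solutions along $\gamma$ using uniqueness and the absence of caustics, or by passing to the Lagrangian picture as the paper does. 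A second, minor difference: you expand $1/s=h(1+i\lambda h)^{-1}$ as a geometric series when deriving the transport hierarchy, whereas the paper keeps $-ihL_0-ih(\Delta_g\varphi)+h^2(-\Delta_g+\lambda L_0+\lambda\Delta_g\varphi)$ intact and inverts $(h\partial_t)$ plus a bounded $\mathcal{O}(\mu)$ perturbation by Neumann series in the $\|\cdot\|_\mu$ norm; either works, but the paper's bookkeeping is what makes Lemmas~\ref{lem_analytic_symbol_1}--\ref{lem_analytic_symbol_2} directly applicable. Your remarks on the cutoff, the $L^2$ normalization, and the non-cancellation at self-intersections (distinct real gradients $\dot\gamma(t_l)$ give near-orthogonal branches) are correct and match the paper.
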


We have chosen to give a fairly complete proof of Theorem  \ref{thm_main_1} since we are not aware of a detailed treatment in the literature and since we need to have fairly precise information concerning the quasimodes for our applications. 

%Let us shortly explain how the exponentially small error is achieved in the theorem. 
Let us briefly mention how the exponentially small error is achieved in Theorem \ref{thm_main_1}. 
The proof of the theorem is by using the ansatz $v(x;h)=e^{is \varphi(x)}a(x;h)$, which, as usual, leads to solving the eikonal equation for the phase function $\varphi(x)$ and a transport equation for the amplitude $a(x;h)$. We first find an exact analytic solution for the eikonal equation near a geodesic segment of $\gamma$. Consequently, the transport equation for the amplitude $a(x;h)=\sum_{k=0}^N h^k a_k(x)$ has analytic coefficients and we find $a(x;h)$ as a classical analytic symbol. This involves adapting the nested neighborhood method of \cite{Sjostrand_Analytic}. 
%We find $a(x;h)$ as a solution to the transport equation, which is a classical analytic symbol. 
The error term for $v$ being a true eigenfunction then is 
\begin{equation}
\label{eq:error_term_intro}
(-h^2\Delta_g-(hs)^2)e^{is\varphi}\bigg(\sum_{j=0}^Nh^j a_j\bigg)=h^{N+2} T_2(a_N),
\end{equation}
% 
% $(-h^2\Delta_g-(hs)^2)v$ for the ansatz of $v$ being a true eigenfunction is of the form
% \begin{equation}\label{eq:error_term_intro}
%  h^{N+2}\s T_2(a_N), %a_N(\Delta_g a_N+\lambda (L_0+\Delta_g\varphi)a_N),
% \end{equation}
where $T_2$ is a second  order operator with analytic coefficients.
Cauchy estimates and~\eqref{eq_4_2} then yield that the error term~\eqref{eq:error_term_intro} is bounded by $h^{N+2}C^{N+1}N^N$. Letting the order $N$ of the expansions of $a$ depend on $h$ as $N=N(h)=[\frac{1}{heC}]$ gives the exponentially small error in the theorem.

The above was based on finding first an exact analytic solution to the eikonal equation $\abs{d\varphi}_g=1$ near a geodesic segment of $\gamma$. To find such a solution, we view the eikonal equation as the Hamilton-Jacobi equation,
\begin{equation}\label{eq:HJ}
 p(x,\varphi_x'(x))=0,
\end{equation}
where $p(x,\xi)=\abs{\xi}_{g(x)}^2-1$ is holomorphically continued to a complex domain.
%Here $T^*X$ is the complexified cotangent bundle. 
When solving the Hamilton-Jacobi equation \eqref{eq:HJ} we proceed by a geometric argument of constructing a complex Lagrangian manifold,
\[
 \Lambda \subset p^{-1}(0),
\]
in a complex neighborhood of a segment of the graph of $\dot\gamma\subset T^*X$, see \cite{Sjostrand_1975}.
% 
% 
% We solve the Hamilton-Jacobi equation by constructing an analytic Lagrangian submanifold $\Lambda\subset T^*X$ inside $p^{-1}(0)$,
% \[
%  \Lambda \subset p^{-1}(0)\subset T^*X.
% \]
The solution $\varphi$ is then obtained as a generating function of the Lagrangian $\Lambda$, which parametrises $\Lambda$ as
\[
 \Lambda=\{(x,\varphi'_x(x)\}.
\] 
%The Lagrangian $\Lambda$ then yields a solution $\varphi$ to the eikonal equation 
% A Lagrangian can be (locally) 
% parametrized as the graph of the gradient of a function $\varphi:X\to \C$,
% \[
%  \Lambda=\{(x,\varphi'_x(x)\}.
% \]
Extending the argument to a neighborhood of the geodesic segment of $\gamma$ requires some extra work involving positive Lagrangians.

Let us proceed to explain the main ideas in the proof of Theorem \ref{thm_main_2}. Let $\alpha_0=(x_0',\xi_0')\in S^*M_0^{\text{int}}$ be generated by an admissible pair of geodesics $\gamma_1(\alpha_0)$ and $\gamma_2(\alpha_0)$ on $M_0$. We first show that there exists a neighborhood of $\alpha_0$ in $S^*M_0^{\text{int}}$ such that every point $\alpha$ in the neighborhood is generated by an admissible pair of geodesics $\gamma_1(\alpha)$ and $\gamma_2(\alpha)$ on $M_0$. Next we construct two real analytic families of Gaussian beams quasimodes $v_1(\alpha)$ and $v_2(\alpha)$ on $M_0$, associated to $\gamma_1(\alpha)$ and $\gamma_2(\alpha)$, respectively, with exponentially small errors. The fact that $(M,g)$ is transversally anisotropic provides us with the limiting Carleman weight $\phi(x)=x_1$ for the Laplacian, and using the technique of Carleman estimates, we convert the families of Gaussian beams $v_1(\alpha)$ and $v_2(\alpha)$ into two families of harmonic functions on $M$ with exponentially small remainder terms. Testing the orthogonality relation \eqref{eq_int_orthog} with the constructed families of harmonic functions leads to the exponential decay of the FBI transform of $\hat f(\lambda,\cdot)$ in a neighborhood of $\alpha_0$. Using the FBI characterization of the analytic wave front set, see \cite{Sjostrand_Analytic}, we conclude the proof. Note that we need to work with families of Gaussian beams to fill out the entire neighborhood of $\alpha_0$.  

\begin{rem}
Similarly to \cite{DKuLLS_2018}, Theorem \ref{thm_main_2} and Theorem \ref{thm_main} are established for transversally anisotropic manifolds rather than CTA manifolds.  The reason for this is that the standard reduction 
\[
c^{\frac{n+2}{4}} \circ (-\Delta_{cg}) \circ c^{-\frac{(n-2)}{4}} = -\Delta_{g}+ q, \quad  g=e\oplus g_0,
\]
leads to the potential 
\[
 q=  - c^{\frac{n+2}{4}}\Delta_g(c^{-\frac{(n-2)}{4}}),
\]
see \cite{DKuLS_2016}, and therefore, to construct harmonic functions with exponentially small remainder terms on a CTA manifold, one has to construct Gaussian beam quasimodes for the conjugated Schr\"odinger operator,
\[
e^{sx_1}(-h^2\Delta_g+h^2q)e^{-sx_1}, 
\]
with exponentially small errors. If $c$ is independent of $x_1$ and real-analytic then so is $q$, and this construction could be done as in  Theorem \ref{thm_main_1}. Notice also that for this reason, one can also include a general real analytic potential which is independent of $x_1$ in the results of   Theorem \ref{thm_main_1}. However, if $c$ depends on $x_1$ then the corresponding sequence of transport equations becomes of $\bar \p$-type, see e.g. \cite{Feizmohammadi_Oksanen_2020}, \cite{Krup_Uhlmann_2018}, which complicates the analysis of Theorem \ref{thm_main_1} further and and is therefore not developed here. 
\end{rem}

Let us mention that Gaussian beam quasimode constructions have a long tradition in microlocal analysis, see  \cite{Babich_Buldyrev}, \cite{Ralston_1977}, \cite{Ralston_1982}, with applications in the analysis of eigenfunctions, see   \cite{Zelditch}, and inverse problems, see   \cite{Oksanen_Salo_Stefanov_Uhlmann_2020} and the references given there.  

Finally, let us point out certain related results on a standard geometric version of the Calder\'on problem, which asks to determine a metric $g$ up to natural gauges (a boundary-fixing diffeomorphism, and also a conformal factor when $\dim(M) = 2$) from the knowledge of Cauchy data on $\p M$ of solutions of the equation $-\Delta_g u = 0$ in $M$.  This problem was solved in   \cite{LassasUhlmann} when $\dim(M) = 2$, but for $\dim(M) \geq 3$ it is only known under additional conditions such as the manifold being real-analytic, see \cite{LeeUhlmann, LassasUhlmann, LassasTaylorUhlmann}, or Einstein \cite{GuillarmouSaBarreto}. Alternative proofs are given in \cite{Belishev_2009, LassasLiimatainenSalo_2019}. Interesting counterexamples in the case of measurements on disjoint sets or low regularity coefficients are given in \cite{DaudeKamranNicoleau_2019, DaudeKamranNicoleau_2020}. If one allows degenerate coefficients, there are other counterexamples~\cite{LassasTaylorUhlmann, GLU3}. Counterexamples with degenerate coefficients form the basis of invisibility cloaking, see e.g.~\cite{Uhlmann}.

The paper is organized as follows.  Section \ref{sec_quasimodes} is devoted to the construction of exponentially accurate Gaussian beam quasimodes and the proof of Theorem~\ref{thm_main_1}.  Section \ref{sec_geodesics} contains some results concerning properties of geodesics needed in the proof of Theorem \ref{thm_main_2}.  Section \ref{sec_analytic_family_quasimodes} extends Theorem  \ref{thm_main_1} to produce  analytic families of exponentially accurate Gaussian beam quasimodes. 
The construction of families of harmonic functions based on Gaussian beam quasimodes is presented in Section 
\ref{sec_CGO_based_quasi}. Section \ref{sec_proofs_of_main_theorems} contains some facts about analytic wave front sets and the proofs of Theorem \ref{thm_main_2} and Theorem \ref{thm_main}.  The admissibility property of geodesics in Example \ref{ex_main} is verified in Appendix \ref{app_example}.

\subsection*{Acknowledgements}

The research of K.K.\ is partially supported by the National Science Foundation (DMS 1815922). T.L.\ and M.S.\ were supported by the Academy of Finland (Finnish Centre of Excellence in Inverse Modelling and Imaging, grant numbers 312121 and 309963), and M.S.\ was also supported by the European Research Council under Horizon 2020 (ERC CoG 770924). This material is based upon work supported by the National Science Foundation under Grant No.\ 1440140, while K.K.\ and M.S.\ were in residence at MSRI in Berkeley, California, during the semester on Microlocal Analysis in 2019. K.K.\ is very grateful to Johannes Sj\"ostrand for a very helpful discussion and for pointing out the reference \cite{Sjostrand_1975}.

\section{Exponentially accurate quasimodes. Proof of Theorem \ref{thm_main_1}}\label{sec:exp_accurate_quasimodes}

\label{sec_quasimodes}

Let $(X,g)$ be a compact Riemannian manifold of dimension $n\ge 2$ with smooth boundary, contained in a larger real analytic open manifold $(\hat X,g)$ of the same dimension with $g$ real analytic in $\hat X$. We extend $\gamma$ as a unit speed geodesic in $\hat X$.  Let $\varepsilon>0$ be such that $\gamma(t)\in 
\hat{X}\setminus X$ and $\gamma(t)$ has no self-intersection for $t\in [-T_1-2\varepsilon, -T_1)\cup (T_2,T_2+2\varepsilon]$. This choice of $\varepsilon$ is possible since $\gamma$ is non-tangential.  First  it follows from \cite[Lemma 7.2]{Kenig_Salo_APDE_2013} that $\gamma|_{[-T_1-\varepsilon,T_2+\varepsilon]}$ self-intersects only at finitely many times $t_j$ with 
\[
-T_1< t_1<\dots <t_N< T_2.
\]
We also set $t_0:=-T_1-\varepsilon$ and $t_{N+1}:=T_2+\varepsilon$. An application of  \cite[Lemma 3.5]{DKuLS_2016} shows that there exists an open cover $\{(U_j,\kappa_j)\}_{j=0}^{N+1}$ of $\gamma([-T_1-\varepsilon,T_2+\varepsilon])$ consisting of coordinate neighborhoods $U_j$ and real analytic diffeomorphisms $\kappa_j$ having the following properties: 
\begin{itemize}
\item[(i)] $\kappa_j(U_j)=I_j\times B$, where $I_j$ are open intervals and $B=B(0,\delta')$ is an open ball in $\R^{n-1}$.  Here $\delta'>0$  can be taken arbitrarily small and the same for each $U_j$, 
\item[(ii)] $\kappa_j(\gamma(t))=(t,0)$ for each $t\in I_j$,
\item[(iii)] $t_j$ only belongs to $I_j$ and $\overline{I_j}\cap \overline{I_k}=\emptyset$ unless $|j-k|\le 1$,
\item[(iv)] $\kappa_j=\kappa_k$ on $\kappa_j^{-1}((I_j\cap I_k)\times B)$.
\end{itemize}
The corresponding local coordinates $\kappa_j(x)=(t,y)\in U_j$ are called the Fermi coordinates. Here we note that  Lemma 3.5 in  \cite{DKuLS_2016}  is established in the $C^{\infty}$ case, and the real analyticity of the Fermi diffeomorphisms $\kappa_j$ is obtained by inspection of the proof of Lemma 3.5 in  \cite{DKuLS_2016}, in view of the analyticity of $\hat X$. 
As observed in the proof of \cite[Lemma 3.5]{DKuLS_2016}, in the case when $\gamma$ does not self-intersect, there are Fermi coordinates on a single coordinate neighborhood of $\gamma|_{[-T_1-\varepsilon, T_2+\varepsilon]}$ so that (i) and (ii) are satisfied. These coordinates are given by inverting the map
\[
 (t,y)\mapsto \text{exp}_{\gamma(t)}\big(\sum_{k=1}^{n-1}y^k\s e_k(t)\big) \in \hat X.
\]
Here $e_k(t)$ are the parallel transportations of the last $n-1$ vectors of  an orthonormal frame $\{\dot{\gamma}|_{t=-T_1},e_1,\ldots,e_{n-1}\}\subset T_{\gamma(-T_1)}M$ and $\exp$ is the exponential map of $(\hat X,g)$.

Our goal is to construct exponentially accurate Gaussian beam quasimodes near $\gamma([-T_1-\varepsilon, T_2+\varepsilon])$. We shall start by carrying out the quasimode construction in a fixed coordinate neighborhood $U=U_j$ which we can identify with the set 
$I\times B$, where $I\subset \R$ is an open interval and $B=B(0,\delta')$ is an open ball in $\R^{n-1}$ with $\delta'>0$. Without loss of generality, we assume that $0\in I$. 
The geodesic $\gamma$ in the open set $U$ is given by $\Gamma=\{x=(t,y)\in I\times B:y=0\}$. 

Let us consider the following Gaussian beam ansatz, 
\begin{equation}
\label{eq_3_0_quasimode}
v(t,y;h)=e^{is \varphi(t,y)}a(t,y;h), \quad s=\frac{1}{h}+i\lambda,\quad \lambda\in \R,
\end{equation}
where the phase $\varphi$ is complex valued with $\text{Im}\, \varphi(t,y)\ge 0$ and $a$ is an amplitude. We shall  proceed to construct the quasimode $v$ so that the phase $\varphi$ satisfies \eqref{eq_prop_gaussian_1_phase_100} and the amplitude $a$ is an elliptic classical analytic symbol.  

\subsection{Construction of the phase function $\varphi$}  We shall proceed using the classical arguments, solving the Hamilton-Jacobi equation in the complex domain and making crucial use of the geometry of positive complex Lagrangians, see \cite{Sjostrand_1975}. Let us remark here that while we only need the good properties of the phase in the real domain, specifically along the geodesic $\gamma$, since the phase function takes complex values, the Hamilton-Jacobi equation holds naturally for the holomorphic extensions in the complex domain. From the geometric point of view, the complex Lagrangian manifold naturally associated to the phase function $\varphi$ is not confined to the real domain but is a submanifold of the complexified phase space.

First we have  
\begin{equation}
\label{eq_3_0}
\begin{aligned}
e^{-is \varphi}(-h^2\Delta_g-(hs)^2)e^{is \varphi}a=-h^2\Delta_g a-ih(1+i\lambda h)[2\langle d\varphi, da\rangle_g +(\Delta_g \varphi)a]\\
+(1+i\lambda h)^2[\langle d\varphi, d\varphi\rangle_g-1]a.
\end{aligned}
\end{equation}
In the usual Gaussian beam construction in the $C^{\infty}$--setting, one solves the eikonal equation to a large, and sometimes infinite, order along the geodesic, see \cite{Oksanen_Salo_Stefanov_Uhlmann_2020}, \cite{Babich_Buldyrev}, \cite{Ralston_1977}, \cite{Ralston_1982}. Working in the present real analytic setting, it will be natural to 
solve the eikonal equation 
\begin{equation}
\label{eq_3_1}
\langle d\varphi, d\varphi\rangle_g-1=p(x,\varphi'_x(x))=0
\end{equation}
in a full neighborhood of the geodesic. Here 
\begin{equation}
\label{eq_3_1_2}
p(x,\xi)=|\xi|_g^2-1=G(x) \xi \cdot \xi - 1
\end{equation}
 is the semiclassical principal symbol of the operator  $P=-h^2\Delta_g-(hs)^2$, where $G(x)=(g^{jk}(x))$.  Since the metric $g$ is real analytic, $p(x,\xi)$ extends to a holomorphic function in an open set of the form $\tilde U\times \C^n$, where 
 \[
\tilde U\subset \C^n  
 \]
is a complex neighborhood of $U$.   

Let $(x(t), \xi(t))=\exp (\frac{t}{2}H_p)(0,\xi_0)$ be the integral curve of the Hamiltonian $H_p$ in $T^*X$, which corresponds to the unit speed geodesic $\gamma$, so that 
\[
\pi_x\bigg(\exp \bigg(\frac{t}{2}H_p\bigg)(0,\xi_0)\bigg)=\gamma(t), \quad t\in I\subset \R, 
\]
where $\pi_x(x,\xi)=x$, and $\xi_0=\dot{\gamma}(0)$. Here $\dot{\gamma}(0)$ is viewed as a cotangent vector using the Riemannian duality.  Since $(0, \xi_0)\in p^{-1}(0)\cap(U\times\R^n)$, we therefore have $(x(t), \xi(t))\in p^{-1}(0)\cap(U\times\R^n)$ for all $t\in I$. We have explicitly the Hamilton's equations
\begin{equation}
\label{eq_3_2}
\begin{cases}
\dot{x}(t)=\frac{1}{2}\p_{\xi}p(x(t),\xi(t)),\\
\dot{\xi}(t)=-\frac{1}{2}\p_x p(x(t),\xi(t)),\\
x(0)=0,\\
\xi(0)=\xi_0.
\end{cases}
\end{equation}
Recalling that $x=(t,y)\in U$ and writing $\xi=(\tau, \eta)\in T^*_xX$ for the dual variable, we see from \eqref{eq_3_2} that 
\begin{equation}
\label{eq_3_2_new_1}
\p_\tau p(x(t),\xi(t))\ne 0\quad \text{for all } t \in I,
\end{equation} 
since the $t$ component of $\dot x(t)$ is identically $1$ in the $(t,y)$ coordinates.

We look for a real analytic solution $\varphi$ of \eqref{eq_3_1} in $U$ such that 
\begin{equation}
\label{eq_3_4}
\text{Im}\,  \varphi (t,y)\ge 0, \quad \text{Im}\, \varphi(t,0) =0, \quad  \text{Im}\, \varphi''_{yy}(t,0)>0,\quad t\in I, 
\end{equation}
and therefore, 
\[
\text{Im}\, \varphi(t,y)\sim |y|^2=\dist((t,y), \Gamma)^2, \quad  (t,y)\in U.
\]
We will find the required real analytic solution of \eqref{eq_3_1} as the restriction to $U\subset \R^n$ of a holomorphic function $\varphi$ in $\tilde U\subset \C^n$, solving the following Cauchy problem for the Hamilton-Jacobi equation in the complex domain,
\begin{equation}
\label{eq_3_3}
\begin{cases} p(x,\varphi'_x(x))=0,\quad x=(t,y)\in \tilde U,\\
\varphi(0, y)=\psi(y),\\
\varphi'_x(0)=\xi_0.
\end{cases}
\end{equation}
Here we take $\psi$ to be  a holomorphic function near $0\in \C^{n-1}$ such that 
\begin{equation}
\label{eq_3_4_1}
\text{Im}\, \psi''_{yy}(0)>0, 
\end{equation}
$\psi(0)$ is real, 
and so that the compatibility condition  $\psi_y'(0)=\eta_0$ in \eqref{eq_3_3} holds, with $ \xi_0=(\tau_0, \eta_0)\in \R\times\R^{n-1}$.  Note that $p$, $\varphi$ and $\psi$ are holomorphic in their variables. For a holomorphic function $f(z_1,\dots, z_N)$ in an open set $V\subset \C^N$ we write $f'_z(z)=(\p_{z_1}f(z), \dots, \p_{z_N}f(z))$ for the complex gradient, $f''_{zz}(z)=(\p_{z_jz_k}f(z))_{j,k=1}^N$ for the complex Hessian, etc. If $z_j=x_j+iy_j$, holomorphicity implies that $\p_z^\alpha f(z)=\p_x^\alpha f(z)$ for any multi-index $\alpha$. This shows that a holomorphic solution $\varphi$ of \eqref{eq_3_3} in $\tilde U\subset \C^n$ indeed yields a real analytic solution of \eqref{eq_3_1} in $U$.

\textbf{Remark.} Let us note that \cite[Theorem 5.5]{Grigis_Sjoistrand_book} gives the standard Hamilton-Jacobi theory locally near a point in the smooth case, and the extension of this theory to the holomorphic case is discussed in the remark following Theorem 1.8.2 in \cite{Hormander_book_old}. However here we need to construct the phase $\varphi$ enjoying the good properties along the entire geodesic segment, and therefore, we shall give a detailed discussion of the construction below. The condition \eqref{eq_3_4_1} will  be crucial for this purpose.

\textbf{Step 1.  Solving near a point. }
In order to solve \eqref{eq_3_3}, we start by following the proof of  \cite[Theorem 5.5]{Grigis_Sjoistrand_book}, see also \cite{Sjostrand_1975}. The setting of our proof is illustrated in Figure~\ref{pic:Flowout_of_Lambda_prime} below.  
 \begin{figure}[ht!]\label{pic:Flowout_of_Lambda_prime}
 \centering
  \includegraphics[scale=1.6]{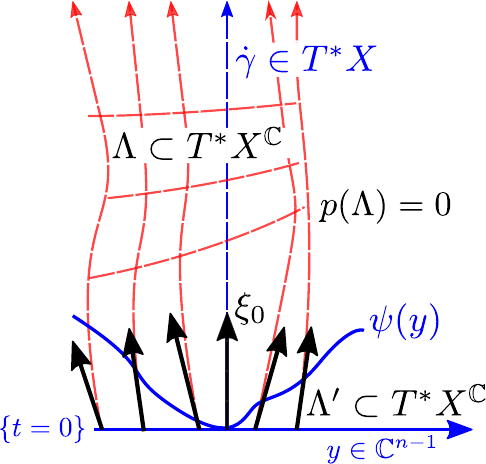}
 \caption{A Lagrangian submanifold $\Lambda \subset T^*X^\C$ satisfying $p\s(\Lambda)=0$ is the union of the red integral curves of $H_p$ in $T^*X^\C$ passing through $\Lambda'\subset T^*X^\C$, which is represented by the black arrows. Here $T^*X^\C$ is the cotangent bundle of the complexification of $X$, which is locally $\C^n_x\times \C^n_\xi$.}
 \label{pic:Flowout_of_Lambda_prime}
 \end{figure}
To this end, we observe first that in view of \eqref{eq_3_2_new_1}, by the implicit function theorem applied to $p(0,\ccdot,\ccdot,\ccdot)$, in a complex neighborhood of $(0,0,\tau_0, \eta_0)$ we have $p(0,y,\tau, \eta)=0$ if and only if $\tau=\lambda(y,\eta)$ where $\lambda$ is a holomorphic function near $(0, \eta_0)\in \C^{2(n-1)}$ such that $\lambda(0,\eta_0)=\tau_0$. 

Let us define
\[
\Lambda':=\{(0, y, \tau, \eta): \eta=\psi'_y(y), \tau=\lambda(y,\eta), y\in \text{neigh}(0,\C^{n-1}) \}\subset \C^{2n}.
\]
We have that $\Lambda'$ is a complex manifold of complex dimension $n-1$ such that 
\[
\Lambda'\subset p^{-1}(0),
\]
which is isotropic in the sense that the restriction of $\sigma$ to $T\Lambda'\times T\Lambda'$ vanishes:
\begin{equation}
\label{eq_new_sigma}
\sigma|_{\Lambda'}=0.
\end{equation}
  Here $\sigma=\sum_{j=1}^n d\xi_j\wedge dx_j$ is the complex symplectic form on $\C^{2n}=\C^n_x\times \C^n_\xi$. Indeed, any vector tangent to $\Lambda'$ is of the form $(0,V^y,V^{\tau},V^{\eta})$ with $V^\eta=\psi_{yy}''\s V^y$ and $V^y\in \C^{n-1}$. Applying $\sigma$ to two such vectors gives $V_1^y\cdot \psi_{yy}''\s V_2^y-V_2^y\cdot \psi_{yy}''\s V_1^y=0$, showing \eqref{eq_new_sigma}.

  %Thus if $Z_1$ and $Z_2$ are two  such vectors, then $\sigma(Z_1,Z_2)=V_1^y\cdot \psi_{yy}''\s V_2^y-V_2^y\cdot \psi_{yy}''\s V_1^y=0$.)} % with $V^y_1,V^y_2\in \C^{n-1}$)
%=(0,V_1^y,V_1^{\tau},V_1^{\eta})$ and $Z_2=(0,V_1^y,V_1^{\tau},V_1^{\eta})$ are tangent to $\Lambda'$,  it follows that $\sigma(T_1,T_2)=V_1^y\cdot \psi_{yy}''\cdot V_2^y-V_2^y\cdot \psi_{yy}''\cdot V_1^y=0$.)
%Any vector tangent to $\Lambda'$ is of the form $(0,V^y,V^{\tau},V^{\eta})$
%Let $(0,\overline{y},\overline{\tau},\overline{\eta})\in \Lambda'$. Then $X\in T_{(0,\overline{y},\overline{\tau},\overline{\eta})}\Lambda'$ if and only if $X=(0, V^y, \lambda_y'(\overline{y},\overline{\eta})\cdot V^y+\lambda_\eta'(\overline{y},\overline{\eta})\cdot \psi_{yy}''(\overline{y})\cdot V^y), \psi_{yy}''(\overline{y})$). We write $X=(V^x,V^\xi)$, where $V^x=(0,V^y)$. Then $\sigma(X,X)=\sum_{j=1}^nV_j^\xi V_j^x-V_j^xV_j^\xi$. Since the first component of $V^x$ is zero, we have that first component of $V^\xi$ is irrelevant. Consequently, we have $\sigma(X,X)=\sum_{j=2}^nV_j^\xi V_j^y-V_j^yV_j^\xi= V_y\cdot \psi_{yy}''(\overline{y})\cdot V^y-V^y\cdot \psi_{yy}''(\overline{y})\cdot V^y=0.$ Do you agree? If, I could write a hint for the reader how to see $\sigma|_{\Lambda'}=0$.
Note also that 
\begin{equation}
\label{eq_3_4_2_00-new}
\Lambda'\cap \R^{2n}=\{(0,0,\tau_0,\eta_0)\}. 
\end{equation}
 Indeed, $(0,0,\tau_0,\eta_0)\in \Lambda'\cap \R^{2n}$ as $(\tau_0,\eta_0)\in \R^n$ and $\psi'_y(0)=\eta_0$. To see the opposite inclusion, let $(0, y, \lambda(y,\eta), \eta=\psi'(y))\in  \Lambda'\cap \R^{2n}$ and Taylor expand $\psi'(y)$ at $y=0$, 
  \[
 \eta=\psi'(y)=\eta_0+\psi''(0)y+\mathcal{O}(|y|^2), \quad y\in \R^{n-1}.
 \]
 We have $\text{Im}\, \eta=\text{Im}\, \psi''(0)y+\mathcal{O}(|y|^2)$, and  therefore, in view of \eqref{eq_3_4_1},  $\text{Im}\, \eta =0$ implies that $y=0$. This shows \eqref{eq_3_4_2_00-new}.

Let $H_p$ be the complex Hamilton vector field of $p$, and let us consider the $H_p$ flowout of $\Lambda'$:  
\[
\Lambda=\bigg\{\exp\bigg(\frac{t}{2}H_p\bigg)(\rho): \rho\in \Lambda', t\in \text{neigh}(I,\C)\bigg\}\subset \C^{2n}. 
\]
Here if $\mu=\sum_{j=1}^N a_j(z)\p_{z_j}$ is a holomorphic vector field on an open set $V\subset \C^N$ in the sense that $a_j\in \text{Hol}(V)$, $j=1,\dots,n$, we can define the flow $\exp(t\mu)(\rho)$, $\rho\in V$, locally for $t\in \text{neigh}(0,\C)$, by solving the system of ODE, 
\[
\begin{cases}\dot{z}_j(t)=a_j(z(t)), \quad 1\le j\le n,\\
z(0)=\rho,
\end{cases}
\]
see \cite[Section 1]{Forstneric} and the references given there. 

Then $\Lambda'\subset \Lambda$, and since the flow of $H_p$ preserves $p$, we have 
\[
\Lambda\subset p^{-1}(0),
\]
and $\Lambda$ is a $\C$--Lagrangian submanifold of $\C^{2n}$, see \cite[Proposition 5.4]{Grigis_Sjoistrand_book} for a proof in the real case. The proof in the present holomorphic setting is similar.  Let us also recall from \cite[page 60]{Grigis_Sjoistrand_book} that the holomorphic Hamilton vector field  $H_p$ is tangent to $\Lambda$ at each point of $\Lambda$. This is because $\Lambda$ is a Lagrangian contained in $p^{-1}(0)$.

The differential of $\pi_x|_{\Lambda}$ is bijective at $(0,0,\tau_0,\eta_0)$ since the differential of $\pi_x$ is injective  and since any Lagrangian submanifold has dimension $\dim(X)$. (The differential of $\pi_x|_\Lambda$ is injective since the differential of the exponential map $TX\to X$ is injective.)
Consequently, there is a function $\varphi\in \text{Hol}(\text{neigh}(0,\C^n))$ such that
\begin{equation}
\label{eq_3_4_2}
\Lambda=\Lambda_\varphi:=\{(x,\varphi'_x(x)): x\in \text{neigh}(0,\C^n)\},
\end{equation}
see \cite[Section 5.6, Exercise 4]{Martinez_book}, and also \cite[Theorem 5.3]{Grigis_Sjoistrand_book} for the real version of this result. We have $\varphi'_x(0)=\xi_0$ and modifying $\varphi$ by a constant we get $\varphi(0,y)=\psi(y)$, and such a solution is unique.

\textbf{Step 2. Solving near $\gamma$.}
Let us denote the tangent space of $\Lambda$ at $(0,0,\tau_0,\eta_0)$ by $\Lambda_0$ and write
\begin{equation}
\label{eq_3_7_1}
\Lambda_0:=T_{(0,0,\tau_0, \eta_0)}\Lambda=\{(\delta_x,\delta_\xi)\in \C^n\times \C^n: \delta_\xi=\varphi''_{xx}(0)\delta_x\},
\end{equation}
where in the second equality we used  \eqref{eq_3_4_2}. 

We claim that $\Lambda_0$ is a positive Lagrangian plane in the sense that 
\[
\frac{1}{i}\sigma(\rho,\overline{\rho})\ge 0,\quad \rho\in \Lambda_0.
\]
To this end, letting $M_0=\varphi''_{xx}(0)$ and using~\eqref{eq_3_7_1}, we write   
 $\rho=(\delta_x,M_0\delta_x)\in \Lambda_0$. Then using that $M_0$ is symmetric, we get
\begin{equation}
\label{eq_3_13_1}
\begin{aligned}
\frac{1}{i}\sigma(\rho,\overline{\rho})=\frac{1}{i}(M_0\delta_x\cdot \overline{\delta_x}-\overline{M_0\delta_x}\cdot\delta_x)=
2\text{Im}\, (M_0\delta_x\cdot\overline{\delta_x})\\
=2\text{Im}\, (M_0) \text{Re}\,\delta_x\cdot \text{Re}\, \delta_x+ 2\text{Im}\, (M_0)\text{Im}\delta_x\cdot \text{Im}\delta_x,
\end{aligned}
\end{equation}
and therefore, it suffices to prove that 
\begin{equation}
\label{eq_3_13}
\text{Im}\, M_0\ge 0.
\end{equation}  
In doing so, using \eqref{eq_3_3}, we write 
\begin{equation}
\label{eq_3_11}
M_0=\begin{pmatrix}  \varphi''_{tt}(0,0) & \varphi''_{ty}(0,0)\\
\varphi''_{yt}(0,0) & \psi''_{yy}(0)
\end{pmatrix}.
\end{equation}
Using that $H_p$ is tangent to $\Lambda_\varphi$, we see that $\exp(\frac{t}{2}H_p)(0,\xi_0)=(x(t),\varphi'_x(x(t)))$
 is real for $t\in \text{neigh}(0, \R)$, so that $\varphi'_t(t,0)$, $\varphi'_y(t,0)$ are real. Hence, 
\begin{equation}
\label{eq_im_M_0}
\text{Im}\,M_0=\begin{pmatrix}  0 & 0\\
0 & \text{Im}\, \psi''_{yy}(0)
\end{pmatrix},
\end{equation}
and therefore, by the condition $\text{Im}\, \psi''_{yy}(0)>0$ we imposed on $\psi$ in~\eqref{eq_3_4_1}, \eqref{eq_3_13} follows.

For future reference, let us remark that 
\begin{equation}
\label{eq_im_M_0_1}
\Lambda_0\cap \R^{2n}=\R \s  H_p(0,\xi_0),
\end{equation}
where $\R \s H_p(0,\xi_0)=\{s\s H_p(0,\xi_0): s\in \R\}$. 
Indeed, we have $H_p(0,\xi_0)\in \Lambda_0\cap \R^{2n}$ since the $H_p$ vector field is tangent to $\Lambda$. On the other hand, if $(\delta_x, M_0\delta_x)\in \Lambda_0\cap \R^{2n}$, it follows from \eqref{eq_im_M_0} that 
\[
\delta_x=(\delta_t, 0)=\delta_t\dot{x}(0)=\delta_t p'_{\xi}(0,\xi_0), 
\]
where $\delta_t\in \R$. Here in the second equality we used that $(t,0)$ corresponds to the geodesic in Fermi coordinates. We get $(\delta_x,M_0\delta_x)=\delta_t(p'_\xi(0,\xi_0),M_0p'_\xi(0,\xi_0))=\delta_t H_p((0,\xi_0))$, which shows \eqref{eq_im_M_0_1}. Here in the last equality we used $H_p(0,\xi_0)\in \Lambda_0$.

Let
\[
\kappa(t):=\exp\bigg(\frac{t}{2} H_p\bigg): \Lambda \to \Lambda, \quad t\in I\subset \R, 
\]
and therefore, the differential satisfies 
\[
d\kappa(t)(0,\xi_0):\Lambda_0\to T_{\kappa(t)(0,\xi_0)}\Lambda.
\]
As the canonical transformation $\kappa(t)$ is real for each $t\in I$, $d\kappa(t)(0,\xi_0)$ preserves positivity, see  \cite[Section 5.6, Exercise 8]{Martinez_book},  and therefore, 
\[
\Lambda_t:=T_{\kappa(t)(0,\xi_0)}\Lambda\subset \C^{2n} 
\]
is a positive Lagrangian plane, for all $t\in I$.

We claim that $\Lambda_t$ is transversal to the fiber  $F=\{(0,\eta):\eta\in \C^n\}\subset \C^{2n}$, for all $t\in I$, i.e. $\Lambda_t+F=\C^{2n}$. 
 As $\dim \Lambda_t=n$, we have to show that $\Lambda_t\cap F=\{0\}$. Indeed, let $(0,\eta)\in \Lambda_t\cap F$. Then 
\eqref{eq_3_7_1} implies that 
\begin{equation}
\label{eq_3_18}
\begin{pmatrix}0\\ 
\eta
\end{pmatrix}=d\kappa (t)(0,\xi_0) \begin{pmatrix}
\delta_x\\
M_0\delta_x
\end{pmatrix}, 
\end{equation}
for some $\delta_x\in \C^n$.
We have 
\begin{align*}
0=\frac{1}{i}\sigma\bigg( \begin{pmatrix}0\\ 
\eta
\end{pmatrix}, 
\overline{\begin{pmatrix}0\\ 
\eta
\end{pmatrix}} \bigg)= \frac{1}{i}\sigma\bigg(d\kappa (t)(0,\xi_0) \begin{pmatrix}
\delta_x\\
M_0\delta_x
\end{pmatrix}, 
d\kappa (t)(0,\xi_0) \overline{\begin{pmatrix}
\delta_x\\
M_0\delta_x
\end{pmatrix}} \bigg)\\
= \frac{1}{i}\sigma\bigg( \begin{pmatrix}
\delta_x\\
M_0\delta_x
\end{pmatrix}, 
 \overline{\begin{pmatrix}
\delta_x\\
M_0\delta_x
\end{pmatrix}} \bigg)=2\text{Im}\, ( M_0 \delta_x\cdot\overline{\delta_x}).
\end{align*}
As $\text{Im}\, M_0\ge 0$, we get $(\text{Im}\, M_0) \delta_x=0$, and therefore,  \eqref{eq_im_M_0} implies that 
$\delta_x=\alpha p'_\xi(0,\xi_0)$ for some $\alpha\in \C$.  Thus, by \eqref{eq_3_18}  we obtain that 
\[
\begin{pmatrix} 0\\
\eta
\end{pmatrix} = d\kappa (t)(0,\xi_0) (\alpha H_p(0,\xi_0))=\alpha H_p(x(t),\xi(t))=\alpha\begin{pmatrix} \dot{x}(t)\\
\dot{\xi}(t)
\end{pmatrix}.
\]
Since $\dot{x}(t)\ne 0$, we get $\alpha=0$. Hence, 
\[
\eta=0,
\] 
which establishes the claim.

As $\Lambda_t$ is transversal to the fiber for all $t\in I$, by inspection of the proof of Theorem 5.5 in  \cite{Grigis_Sjoistrand_book}, we conclude that there exists $\varphi \in \text{Hol}(\text{neigh}(I\times B,\C^n))$ such that $\Lambda=\Lambda_\varphi$ and $\varphi$ solves \eqref{eq_3_3}.  The function $\varphi$ is a continuation of the one appearing in~\eqref{eq_3_4_2}. Notice that it is precisely thanks to the fact that the tangent plane $\Lambda_t$ does not contain any non-zero vector of the form $(0,\eta)$ for all $t\in I$ that the proof of Theorem 5.5 in  \cite{Grigis_Sjoistrand_book} applies near each point in $I\times \{0\}$, see also \cite[Section 24.2]{Hormander_bookI}.

\textbf{Step 3. Properties of the solution.}
Next we shall check that the property \eqref{eq_3_4}, that is
\begin{equation*}
%\label{eq_3_4}
\text{Im}\,  \varphi (t,y)\ge 0, \quad \text{Im}\, \varphi(t,0) =0, \quad  \text{Im}\, \varphi''_{yy}(t,0)>0,\quad t\in I, 
\end{equation*}
holds for $\varphi$.  First, $\varphi'_x(x(t))=\xi(t)$ is real for $t\in I$. 
Writing 
\[
\frac{d}{dt}\varphi(x(t))=\varphi'_x(x(t))\cdot\dot{x}(t)=\xi(t)\cdot \frac{1}{2}p'_\xi(x(t),\xi(t)),
\]
we have 
\begin{equation}
\label{eq_3_17_new_0}
\varphi(t,0)=\psi(0)+\frac{1}{2}\int_0^t \xi(s)\cdot p'_\xi(x(s),\xi(s))ds=\psi(0)+t, 
\end{equation}
as $\xi\ccdot p'_\xi(x,\xi)=2(p(x,\xi)+1)$. 
Thus, using that $\psi(0)$ is real, we see that $\text{Im}\, \varphi(t,0)=0$ for $t\in I$. Furthermore, if $\psi(0)=0$, we get $\varphi(t,0)=t$.

Let $M(t)=\varphi''_{xx}(x(t))$. Then $M(t)$ is an $n\times n$ complex symmetric matrix depending real analytically on $t$, such that 
\begin{equation}
\label{eq_3_17}
\text{Im}\, M(t)\ge 0,
\end{equation} 
 in view of the positivity of $\Lambda_t$. We claim that 
\begin{equation}
\label{eq_3_16}
\text{Im}\, M(t)|_{W}>0,
\end{equation}
where $W\subset \R^{n}$ is an algebraic supplement to $\R\s \dot{x}(t)$ so that $\R\s \dot{x}(t)\oplus W=\R^n$. 
To that end, let us observe first that 
\[
\Lambda_t\cap\R^{2n}=d\kappa(t)(0,\xi_0)(\Lambda_0\cap \R^{2n})=d\kappa(t)(0,\xi_0) (\R \s H_p(0,\xi_0))=\R\s H_p(x(t), \xi(t)).
\]
Here we have used \eqref{eq_im_M_0_1} in the second equality. Let $v\in W$ be such that 
\[
\text{Im}\, M(t) v\cdot v=0.
\]
Hence, by \eqref{eq_3_17}, we get 
\[
\text{Im}\, M(t) v=0.
\]
Thus, $(v, M(t)v)\in \Lambda_t\cap\R^{2n}=\R \s H_p(x(t), \xi(t))$, and therefore, $v$ is proportional to $p'_\xi(x(t), \xi(t))=\dot{x}(t)$. This gives that $v=0$, since $v\in W$. Hence, \eqref{eq_3_16} follows, and we get $\text{Im}\, \varphi''_{yy}(t,0)>0$ for  all $t\in I$. 

Finally, we get $\text{Im}\,\varphi(t,y)\ge 0$ for all $(t,y)\in U$ by Taylor's formula and by using that $\varphi'_x(x(t))=\xi(t)$ is real.  We have therefore constructed a real analytic solution $\varphi$ of \eqref{eq_3_1} such that  \eqref{eq_3_4} holds.

\subsection{Construction of the amplitude}

We shall follow \cite[Theorem 9.3]{Sjostrand_Analytic}, where the construction of the amplitude as a classical analytic symbol is carried out in a neighborhood of a point, extending the construction to a full neighborhood of a geodesic segment. 

We look for the amplitude $a$ in the form of a formal power series in $h$, 
\begin{equation}
\label{eq_new_4_0}
a(x;h)=\sum_{k=0}^\infty h^k a_k(x).
\end{equation}
%\tbl{so that $a$ is a classical analytic symbol. The function $a$ is classical analytic symbol if for any compact set $K$, there is $N$ and $C$ (independent of $N$) such that
%\[
% \sup_{x\in K}\abs{a(x,h)-\sum_{k=0}^{N-1}h^ka_k(x)}\leq C^{N+1}N^N,
%\]
%and if for all integers $k\geq 0$ the functions $a_k$ are holomorphic and 
%\[
% \sup_{x\in K}\abs{a_k(x)}\leq C^kk^k.
%\]
%~\f{I couldn't immediately find a standard reference for this, but used https://arxiv.org/pdf/1808.00199.pdf}
%}
From 
\eqref{eq_3_0},  we see that we want to solve the following equation formally in powers of $h$,
\begin{equation}
\label{eq_new_4_1}
e^{-is\varphi}(-h^2\Delta_g-(hs)^2)e^{is\varphi}a=[-hiL_0 -ih \Delta_g\varphi +h^2(-\Delta_g+\lambda L_0+\lambda\Delta_g\varphi) ]a=0,
\end{equation}
in a fixed complex domain $\tilde U$, containing $\Gamma$. Here 
\begin{equation}
\label{eq_new_4_1_0}
L_0=2\langle d\varphi, d\ccdot \rangle_g=2G(x)\varphi'_x\cdot \p_x=p'_\xi(x,\varphi'_x(x))\cdot \p_x,
\end{equation}
where $p$ is given in \eqref{eq_3_1_2}.
The transport equation \eqref{eq_new_4_1} can be written in the following form, 
\begin{equation}
\label{eq_new_4_1_1}
(hL_0+hf(x)+h^2Q(x,D_x))a=0,
\end{equation}
where $f(x)=\Delta_g\varphi$ is a holomorphic function on $\tilde U$ and $Q(x,D_x)=i(-\Delta_g+\lambda L_0+\lambda\Delta_g\varphi)$ is a holomorphic differential operator of order 2.  
To solve \eqref{eq_new_4_1_1},  we remark first that the holomorphic vector field $L_0$ is transversal to each complex hypersurface $H_{t_0}=\{(t, y)\in \text{neigh}(I,\C)\times\text{neigh}(0,\C^{n-1}): t=t_0\in I\}$ at $(t_0,0)$. Indeed, 
\[
p'_\xi(x(t),\varphi'_x(x(t)))\cdot \p_x=p'_\tau(x(t),\varphi'_x(x(t)))\p_t+p'_\eta(x(t),\varphi'_x(x(t)))\cdot\p_y, 
\]
where $p'_\tau(x(t),\varphi'_x(x(t)))\ne 0$ for all $t\in I$ since $\p_\tau p(x(t),\xi(t))\ne 0$ for all $t \in I$ as noted in \eqref{eq_3_2_new_1}.  Thus, substituting \eqref{eq_new_4_0} into  \eqref{eq_new_4_1_1}, we get a sequence of transport equations which can all be solved uniquely in a suitable complex domain containing $\Gamma$, provided that $a|_{H_{t_0}}$ is prescribed, for some $t_0\in I$.  However, the difficulty here is that we would like our solution $a(x;h)$ to be a classical analytic symbol, and following \cite[Section 9]{Sjostrand_Analytic}, we shall establish this fact making use of the method of "nested neighborhoods" introduced in \cite{Sjostrand_Analytic}. Contrary to  \cite[Theorem 9.3]{Sjostrand_Analytic}, where the family of "nested neighborhoods" is considered near a point, here we shall work in such neighborhoods near a piece of the geodesic.    

For simplicity, let us take $t_0=0$. We look for solution to~\eqref{eq_new_4_1_1} by using convenient coordinates. The coordinates we will use are the usual flowout coordinates (see e.g.~\cite{lee_book}), which we show to exist for $L_0$ on a neighborhood of a given interval.
\begin{lem}
\label{lem_holom_coor}
Let $J\subset\subset I$ be an open interval. There exist local holomorphic coordinates $(s,z)\in \text{neigh}(\overline{J},\C)\times \text{neigh}(0,\C^{n-1})$ such that the hyperplane $H_0$ is given by the equation $s=0$ and  $L_0=\frac{\p}{\p s}$.
\end{lem}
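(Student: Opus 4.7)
The plan is to straighten $L_0$ along $\Gamma|_{\overline{J}}$ by flowing out from the hypersurface $H_0 = \{t=0\}$, i.e.\ to take the (inverse of the) flow map of $L_0$ starting from $H_0$ as the new coordinates. Concretely, I would define
\[
\Phi(s, z) = \exp(s L_0)(0, z),
\]
where $\exp(s L_0)$ denotes the holomorphic flow of the vector field $L_0$ from~\eqref{eq_new_4_1_0}. By~\eqref{eq_3_2_new_1}, the $\partial_t$-coefficient of $L_0$ along $\Gamma$ is nonzero, so $L_0$ is transverse to $H_0$ at every point of $\Gamma|_{\overline{J}}$. Holomorphic ODE theory, together with a covering argument based on compactness of $\overline{J}$, gives that $\Phi$ is well-defined and holomorphic on some complex neighborhood of $\overline{J} \times \{0\} \subset \C \times \C^{n-1}$.

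The central step is to verify that $\Phi$ is a biholomorphism onto its image. At each $(s_0, 0)$ with $s_0 \in \overline{J}$, we have $d\Phi(\partial_s) = L_0(\Phi(s_0,0))$ and $d\Phi(\partial_{z_j}) = d\exp(s_0 L_0)|_{(0,0)}\,\partial_{y_j}$. Since $\{L_0(0,0), \partial_{y_1}, \ldots, \partial_{y_{n-1}}\}$ is a basis of $T_{(0,0)}\C^n$ by transversality, and $d\exp(s_0 L_0)|_{(0,0)}$ is a linear isomorphism that carries $L_0(0,0)$ to $L_0(\Phi(s_0,0))$ (the generator of a flow is invariant under the flow), the vectors $\{d\Phi(\partial_s), d\Phi(\partial_{z_1}), \ldots, d\Phi(\partial_{z_{n-1}})\}$ form a basis of $T_{\Phi(s_0,0)}\C^n$. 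By the holomorphic inverse function theorem, $\Phi$ is a local biholomorphism near each such $(s_0, 0)$.

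For global injectivity on a suitable tubular complex neighborhood of $\overline{J} \times \{0\}$, I would argue by contradiction: any two sequences $(s_k, z_k) \neq (s'_k, z'_k)$ contracting toward $\overline{J} \times \{0\}$ with $\Phi(s_k, z_k) = \Phi(s'_k, z'_k)$ would, by compactness, have limits $(s_*,0)$ and $(s'_*,0)$ satisfying $\Phi(s_*,0) = \Phi(s'_*, 0)$. In the Fermi chart the integral curve $s \mapsto \exp(s L_0)(0, 0)$ traces (a reparametrization of) $\Gamma$, which is injective on $I$; hence $s_* = s'_*$, and the local injectivity established above at $(s_*, 0)$ yields the contradiction. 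Shrinking to such a tube and taking $\Phi^{-1}$ then furnishes coordinates $(s, z)$, in which $H_0 = \Phi(\{s=0\}) = \{s=0\}$ from the normalization $\Phi(0, z) = (0, z)$, and $L_0 = \Phi_*\partial_s = \partial_s$ tautologically from the defining property of the flow.

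The main obstacle is that the classical flow-box theorem only straightens $L_0$ near a single point, whereas the lemma asks for a straightening along the whole compact segment $\overline{J}$; this is overcome by the compactness/covering and contradiction arguments above, both of which use the injectivity of the geodesic inside one Fermi chart.
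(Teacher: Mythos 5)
Your construction is the same as the paper's: define $\Phi(s,z)=\exp(sL_0)(0,z)$ (what the paper calls $F$), show $d\Phi$ is invertible along $\overline{J}\times\{0\}$, establish injectivity on a tubular complex neighborhood, and invert to obtain the coordinates; the identifications $L_0=\partial_s$ and $H_0=\{s=0\}$ then follow. The one place where you take a slightly different route is the nondegeneracy of $d\Phi$ on $\overline{J}\times\{0\}$: the paper computes from the Fermi-coordinate form $L_0 = 2(1+\mathcal{O}(|y|^2))\partial_t + \mathcal{O}(|y|)\cdot\partial_y$ that $DF(s,0)$ is block lower-triangular with nonvanishing $(1,1)$-entry and then invokes Liouville's formula for the other block, whereas you argue abstractly that $d\exp(s_0L_0)|_{(0,0)}$ is an isomorphism carrying $L_0(0,0)$ to $L_0(\Phi(s_0,0))$, so it sends the transversality basis $\{L_0(0,0),\partial_{y_1},\dots,\partial_{y_{n-1}}\}$ to a basis; this is a cleaner, coordinate-free account of the same fact. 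For global injectivity the paper cites a holomorphic version of \cite[Lemma~7.3]{Kenig_Salo_APDE_2013}, which is exactly the compactness/contradiction argument you spell out, so the two are interchangeable there. Your derivation of $H_0=\{s=0\}$ directly from $\Phi(0,z)=(0,z)$ plus injectivity is also slightly tidier than the paper's asymptotic $t(s,z)=2s+\mathcal{O}(|z|^2)s$, but the conclusion is the same.
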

\begin{proof}
We continue to work in the Fermi coordinates $x=(t,y)$ and recall from  \cite{Kenig_Salo_APDE_2013} that 
\begin{equation}
\label{eq_new_4_1_1_1}
G(t,y)=(g^{jk}(t,y))=1+ \mathcal{O}(|y|^2).
\end{equation}
Now \eqref{eq_3_1}, \eqref{eq_3_1_2}, and \eqref{eq_new_4_1_1_1} imply that 
\begin{equation}
\label{eq_new_4_1_1_3}
(\varphi'_t)^2(t,0)+(\varphi'_y)^2(t,0)=1, 
\end{equation}
and therefore, it follows from \eqref{eq_3_17_new_0}  and \eqref{eq_new_4_1_1_3} that $\varphi'_y(t,0)=0$. 
Hence, Taylor expanding $\varphi(t,y)$ at $y=0$, we get 
\begin{equation}
\label{eq_new_4_1_1_4}
\varphi(t,y)=\psi(0)+t+\mathcal{O}(|y|^2).
\end{equation}
It follows from \eqref{eq_new_4_1_0}, \eqref{eq_new_4_1_1_1}, and \eqref{eq_new_4_1_1_4} that 
\begin{equation}
\label{eq_new_4_1_1_5}
L_0=2(1+\mathcal{O}(|y|^2))\begin{pmatrix} 1+\mathcal{O}(|y|^2)\\
\mathcal{O}(|y|) 
\end{pmatrix}\cdot \begin{pmatrix}\p_t\\
\p_y
\end{pmatrix}=2(1+\mathcal{O}(|y|^2))\p_t+\mathcal{O}(|y|)\cdot \p_y.
\end{equation}
Consider the initial value problem for the  flow $\exp(sL_0)(0,z)$, 
\begin{equation}
\label{eq_new_4_1_1_6}
\begin{cases} 
\p_s(t,y)(s,z)=L_0((t,y)(s,z)),\\
(t,y)(0,z)=(0,z),
\end{cases}
\end{equation}
where $(s,z)\in \text{neigh}(I,\C)\times \text{neigh}(0,\C^{n-1})$ .
In particular, $y(s,z)|_{z=0}=0$ and therefore, $y(s,z)=\mathcal{O}(|z|)$. Differentiating the first equation in \eqref{eq_new_4_1_1_6} in $z_j$ and using \eqref{eq_new_4_1_1_5}, we get 
\begin{equation}
\label{eq_new_4_1_1_7}
\begin{cases} 
\p_s(\p_{z_j} t(s,z))= \mathcal{O}(y(s,z)\p_{z_j}y)=\mathcal{O}(|z|),\\
\p_{z_j}t(0,z)=0. 
\end{cases}
\end{equation}
Hence, 
\begin{equation}
\label{eq_new_4_1_1_8}
\p_{z_j}t(s,z)=\mathcal{O}(|z|).
\end{equation}
Consider the holomorphic map
\[
F:  \text{neigh}(I,\C)\times \text{neigh}(0,\C^{n-1})\ni (s,z)\mapsto (t,y)(s,z).
\]
In view of \eqref{eq_new_4_1_1_8}, the differential $DF(s,0)$ is given by  
\begin{equation}
\label{eq_new_4_1_1_9}
DF(s,0)=\begin{pmatrix} t'_s(s,0) & 0& \dots &0\\
y'_{1s}(s,0)&  y'_{1z_1} (s,0)& \dots &y'_{1z_{n-1}}(s,0)\\
\vdots &  \vdots & & \vdots \\
y'_{n-1 s} (s,0)& y'_{n-1 z_1}(s,0)& \dots & y'_{n-1 z_{n-1}}(s,0)
\end{pmatrix},
\end{equation}
where $t'_s(s,0)=2(1+\mathcal{O}(|y(s,0)|^2))=2$. By Liouville's formula, see \cite[Theorem 1.2.5]{Hormander_book_hyperbolic}, we know that the last $n-1$ columns in \eqref{eq_new_4_1_1_9} are linearly independent, and therefore, $\det(DF(s,0))\ne 0$ for all $ s\in\text{neigh}(I,\C)$.  Furthermore, $F|_{I\times \{0\}}$ is injective as $F(s,0)=(t(s,0),0)=(2s,0)$. An application of a holomorphic version of \cite[Lemma 7.3]{Kenig_Salo_APDE_2013} allows us to conclude that  $F$ is a holomorphic diffeomorphism in $\text{neigh}(\overline{J},\C)\times \text{neigh}(0,\C^{n-1})$ where $J\subset\subset I$ is an open interval. 

Now writing $x=(t,y)$, in view of \eqref{eq_new_4_1_1_6}, we see that 
\[
\frac{\p}{\p s} u(x(s,y))=u'_x (x(s,y))\cdot \dot{x}(s,y)=(L_0u)(x(s,y)). 
\]

Finally, it follows from \eqref{eq_new_4_1_1_5} and \eqref{eq_new_4_1_1_6} that 
\[
\begin{cases} \p_s t(s,z)=2(1+\mathcal{O}(|z|^2)),\\
t(0,z)=0,
\end{cases}
\]
and therefore, $t(s,z)=2s+\mathcal{O}(|z|^2)s$. Hence, $t=0$ is equivalent to the fact that $s=0$, showing that the hyperplane $H_0$ is given by the equation $s=0$.
\end{proof}

Passing to the new holomorphic coordinates provided by Lemma \ref{lem_holom_coor}, and renaming them as $x=(t,y)$, we are led from \eqref{eq_new_4_1_1} to consider the following initial value problem, 
\begin{equation}
\label{eq_new_4_2}
\begin{cases}
\big(h\frac{\p}{\p t}+hf(x)+h^2Q(x,D_x)\big)a=0,\\
a|_{t=0}=w(y;h),
\end{cases}
\end{equation}
where $w(y;h)$  is a classical analytic symbol near $0\in \C^{n-1}$. We would like to find a classical analytic symbol $a$ solving  \eqref{eq_new_4_2}. Here $f$ is a holomorphic function, and $Q$ is a holomorphic differential operator of order $2$. To that end, it suffices  to solve the following problem,  
\begin{equation}
\label{eq_new_4_3}
\begin{cases}
\big(h\frac{\p}{\p t}+hf(x)+h^2Q(x,D_x)\big)a=hv,\\
a|_{t=0}=0,
\end{cases}
\end{equation}
where $v(x;h)$ is a classical analytic symbol in $\text{neigh}(\overline{J},\C)\times \text{neigh}(0,\C^{n-1})$. This is because a solution $a$ to~\eqref{eq_new_4_3} with $v=-(\frac{\p}{\p t}+f(x)+hQ(x,D_x))v_0$ and $v_0|_{t=0}=w$, implies that $a+v_0$ solves~\eqref{eq_new_4_2}.  Using that 
\[
\p_t+f(t,y)=e^{-F(t,y)}\circ \p_t \circ e^{F(t,y)},
\]
where $F'_t(t,y)=f(t,y)$, we may assume that $f(x)=0$.  

We shall first carry out the analysis of \eqref{eq_new_4_3} under the assumption that the interval $J$ is symmetric about the origin and after a rescaling we may assume that $\overline{J}=[-1,1]$.  Let $\Omega\subset \C^n$ be open such that 
$[-1,1]_t\times \{0\}_{y}\subset \Omega$ and $\Omega$ is in the domain of definition of various symbols.  Then let $0<\varepsilon<1$, $r>0$ be small but fixed so that  if we set 
\[
\Omega_0=\big\{(t,y)\in \C^n: \frac{|y|}{\varepsilon}+\frac{|\text{Im}\, t|}{\varepsilon}+|\text{Re}\, t|<1+r\big\}
\]
then $\overline{\Omega_0}\subset \Omega$. Consider the family of open sets,
\[
\Omega_s=\big\{(t,y)\in \C^n: \frac{|y|}{\varepsilon}+\frac{|\text{Im}\, t|}{\varepsilon}+|\text{Re}\, t|<1+r-s\big\},
\]
with $0\le s< r$.  Note that $\Omega_s$ is a family of "nested neighborhoods" of $[-1,1]\times 0$ in the sense of  \cite[Theorem 9.3]{Sjostrand_Analytic}, so that we have
\begin{itemize}
\item[(i)] if $s_1>s_2$ then $\Omega_{s_1}\subset \Omega_{s_2}$,
\item[(ii)] there exists $\delta>0$ such that for all $s_1>s_2$ and all $x\in \Omega_{s_1}$ we have the inclusion $B_{\C^n}(x,\delta(s_1-s_2))\subset \Omega_{s_2}$. 
\end{itemize}

Given $\mu>0$, we say that $a\in\mathcal{A}_{\mu}$, if $a(x;h)=\sum_{k=0}^\infty a_k(x)h^k$, $a$ is holomorphic in $\Omega$, such that for all $s\in (0,r)$,
\begin{equation}
\label{eq_new_4_4}
\sup_{\Omega_s} |a_k|\le \frac{f(a,k)}{s^k}k^{k},
\end{equation}
where $f(a,k)$ is the best constant for which \eqref{eq_new_4_4} holds, and 
\begin{equation}
\label{eq_new_4_4_1}
\sum_{k=0}^\infty f(a,k)\mu^k:=\|a\|_{\mu}<\infty. 
\end{equation}
Now if $a\in \mathcal{A}_{\mu}$ for some $\mu>0$ then $f(a,k)\le C^{k+1}$, $k=0, 1, 2,\dots$, and therefore, $a$ is a classical analytic symbol on $\Omega_0$. Let 
\begin{equation}
\label{eq_new_4_5}
(\p_t^{-1} a)(t,y)=\int_0^t a(\tau, y)d\tau.  
\end{equation}
We shall need the following result, see \cite[Theorem 9.3]{Sjostrand_Analytic} and \cite[Lemma 5.5]{Robbiano_Zuily}. 

\begin{lem}
\label{lem_analytic_symbol_1}
Let $a\in \mathcal{A}_\mu$ be of the form 
\[
a=\sum_{k=2}^\infty h^ka_k,
\]
and let $b=(h\p_t)^{-1}a$. Then  
\begin{equation}
\label{eq_new_4_5_1}
\|b\|_\mu\le \mathcal{O}\bigg(\frac{1}{\mu}\bigg)\|a\|_\mu. 
\end{equation}
\end{lem}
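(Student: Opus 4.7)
The plan is to expand $b = (h\partial_t)^{-1} a = \sum_{k \geq 1} h^k b_k$ with $b_k(t,y) = \int_0^t a_{k+1}(\tau, y)\,d\tau$, and reduce the claim to a uniform coefficient estimate of the form $f(b, k) \leq C f(a, k+1)$ for all $k \geq 1$. Once this is established, summing with the weights $\mu^k$ immediately gives
\[
\|b\|_\mu = \sum_{k \geq 1} f(b, k)\mu^k \leq C \sum_{k \geq 1} f(a, k+1)\mu^k = \frac{C}{\mu}\sum_{j \geq 2}f(a, j)\mu^j \leq \frac{C}{\mu}\|a\|_\mu,
\]
which is the stated bound.

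The key geometric observation about the nested family is that if $(t, y) \in \Omega_s$, then for every $\theta \in [0,1]$ the point $(\theta t, y)$ lies in $\Omega_{s_\theta}$ with $s_\theta := s + (1-\theta)\delta$, where $\delta := |\text{Re}\,t| + |\text{Im}\,t|/\varepsilon$. This follows by direct substitution into the defining inequality of $\Omega_{s_\theta}$, and crucially $s_\theta \geq s$ along the whole segment, with equality only at the endpoint $\theta = 1$. Integrating along this straight segment and applying the defining bound for $f(a,k+1)$ at the variable depth $s_\theta$ then yields
\[
|b_k(t,y)| \leq |t|\,f(a, k+1)(k+1)^{k+1}\int_0^1 \frac{d\theta}{s_\theta^{k+1}} = \frac{|t|}{\delta}\cdot\frac{f(a, k+1)(k+1)^{k+1}}{k}\left(\frac{1}{s^k} - \frac{1}{(s+\delta)^k}\right).
\]
Since $\varepsilon < 1$ in the construction of $\Omega_0$, one has $|t| \leq |\text{Re}\,t| + |\text{Im}\,t| \leq \delta$, whence $\sup_{\Omega_s}|b_k| \leq f(a,k+1)(k+1)^{k+1}/(k\,s^k)$. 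Feeding this into the definition of $f(b,k)$ produces
\[
f(b, k) \leq \frac{f(a, k+1)(k+1)^{k+1}}{k^{k+1}} = f(a, k+1)\Bigl(1 + \frac{1}{k}\Bigr)^{k+1} \leq 4\, f(a, k+1), \quad k \geq 1,
\]
while $f(b, 0) = 0$ trivially.

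The main obstacle is precisely this $s_\theta$-variation step. The naive estimate $|b_k(t,y)| \leq |t|\sup_{\Omega_s}|a_{k+1}|$ followed by the defining inequality at the \emph{fixed} depth $s$ gives $|b_k| \lesssim f(a, k+1)(k+1)^{k+1}/s^{k+1}$, which after computing $\sup_s s^k(\cdot)/k^k$ leaves a spurious factor $1/s$ and therefore forces $f(b,k) = +\infty$; the entire argument hinges on the $1/k$ gain harvested from the $\theta$-integral of $s_\theta^{-(k+1)}$, together with the algebraic identity $(k+1)^{k+1}/k^{k+1} = (1+1/k)^{k+1}$ which exactly absorbs the index shift from $k+1$ to $k$. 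A minor technicality is that $s_\theta$ can exceed $r$ when $\delta > r-s$; on that portion of the segment one uses instead the bound $\sup_{\Omega_{s_\theta}}|a_{k+1}| \leq f(a,k+1)(k+1)^{k+1}/r^{k+1}$ obtained as $s \to r^-$ in the defining inequality, which only modifies the constant $C$.
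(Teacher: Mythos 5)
Your main computation reproduces the paper's argument in all essentials: the same coefficient decomposition $b_k=\partial_t^{-1}a_{k+1}$, the same geometric observation that the straight segment from $0$ to $t$ penetrates into deeper members $\Omega_{s_\theta}$ of the nested family, and the same $1/k$ gain harvested from $\int s_\theta^{-(k+1)}\,d\theta$, packaged with the index shift $(k+1)^{k+1}/k^{k+1}=(1+1/k)^{k+1}$. The only cosmetic differences are that you use the exact (and sharp) penetration depth $\delta=|\mathrm{Re}\,t|+|\mathrm{Im}\,t|/\varepsilon$ and evaluate the $\theta$-integral exactly, whereas the paper uses the smaller depth $|t|$ and majorizes the integral by extending to $+\infty$; both give $\sup_{\Omega_s}|b_k|\le f(a,k+1)(k+1)^{k+1}/(ks^k)$, and your resulting constant $4$ is marginally sharper than the paper's $2e$.

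The final paragraph, however, contains a real error. You are right that $\delta$ need not be small ($\delta<1+r-s$, so $\delta$ can be of order $1$ while $r$ is small) so that $s_\theta$ genuinely can exceed $r$; but the proposed remedy does \emph{not} only modify the constant. On the portion $\{\theta:s_\theta\ge r\}$, the substitute bound $f(a,k+1)(k+1)^{k+1}/r^{k+1}$ contributes (after multiplying by $|t|\theta^*\le 1$ and using $1/r^{k+1}\le (1/r)\,s^{-k}$ for $s<r$) a term of size $f(a,k+1)(k+1)^{k+1}/(r\,s^{k})$, which forces $f(b,k)\gtrsim (k+1)^{k+1}/(r\,k^{k})\,f(a,k+1)\sim (k/r)\,f(a,k+1)$. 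The extra factor of $k$ cannot be absorbed when summing $\sum f(b,k)\mu^k$: with $a_k$ taken constant one can already produce $\|a\|_\mu<\infty$ but $\|b\|_\mu=\infty$, so the claim ``only modifies $C$'' is false. The way the estimate is actually closed (and what the paper's own proof silently assumes when it applies \eqref{eq_new_4_4} at depths $s+(1-\sigma)|t|\ge r$) is to read \eqref{eq_new_4_4} as holding for every $s$ with $\Omega_s\ne\emptyset$, i.e.\ $s\in(0,1+r)$, which is the natural definition of the best constant $f(a,k)$ attached to the family. Then since $(t,y)\in\Omega_s$ forces $\delta<1+r-s$, one has $s_\theta<s+\delta<1+r$ throughout, the problematic region is empty, and your main computation is complete as it stands; the last paragraph should simply be deleted.
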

\begin{proof}
We have
\[
b=\sum_{k=2}^\infty h^{k-1}\p_t^{-1}a_k=\sum_{k=1}^\infty h^k b_k,
\]
where $b_k=\p_t^{-1}a_{k+1}$. Let us estimate $\sup_{\Omega_s}|b_k|$. To that end, we write 
\[
b_k(x)=t\int_0^1 a_{k+1}(\sigma t, y)d\sigma
\]
We claim that for $0\le \sigma\le 1$,  if $x=(t,y)\in \Omega_s$ then 
\begin{equation}
\label{eq_new_4_6}
(\sigma t,y)\in \Omega_{s+(1-\sigma)|t|}.
\end{equation} 
Indeed, using that $0<\varepsilon<1$, we get 
\begin{align*}
\frac{|y|}{\varepsilon}+\frac{\sigma |\text{Im}\, t|}{\varepsilon}+\sigma|\text{Re}\, t|&< 1+r-s -\frac{(1-\sigma)}{\varepsilon}|\text{Im}\, t|-(1-\sigma)|\text{Re}\, t|\\
&<1+r-(s+(1-\sigma)|t|),
\end{align*}
showing $(\sigma t,y)\in \Omega_{s+(1-\sigma)|t|}$ as claimed. It follows from \eqref{eq_new_4_4}, \eqref{eq_new_4_6} that for $x\in \Omega_s$, we have
\begin{align*}
|b_k(x)|&\le |t| f(a,k+1)(k+1)^{k+1} \int_0^1\frac{d\sigma}{(s+(1-\sigma)|t|)^{k+1}} \\
&=f(a,k+1)(k+1)^{k+1}|t|  \int_0^1\frac{d\sigma}{(s+\sigma |t|)^{k+1}} \\
&=f(a,k+1)(k+1)^{k+1} \int_0^{|t|}\frac{d\sigma}{(s+\sigma )^{k+1}} \\
&\le
f(a,k+1) 
(k+1)^{k+1} \int_0^{\infty}\frac{d\sigma}{(s+\sigma )^{k+1}}=f(a,k+1)(k+1)^{k+1} \int_s^{\infty}\frac{d\sigma}{\sigma ^{k+1}}\\
&=f(a,k+1) \frac{(k+1)^{k+1}}{ks^k}. 
\end{align*}
Here we have used that $k\ge 1$. Thus, for any $0< s<r$, we get 
\[
\sup_{\Omega_s}|b_k|\le f(a,k+1)\frac{(1+1/k)k^k(1+1/k)^k}{s^k}\le \frac{2ef(a,k+1)}{s^k}k^k, 
\]
and therefore by the definition of $f(b,k)$, see~ \eqref{eq_new_4_4}, we have
\[
f(b,k)\le 2ef(a,k+1), \quad k=1,2,\dots. 
\]
Using \eqref{eq_new_4_4_1}, we obtain that 
\[
\|b\|_{\mu}=\sum_{k=1}^\infty f(b,k)\mu^k\le \sum_{k=1}^\infty 2ef(a,k+1)\mu^k=\frac{2e}{\mu}\|a\|_{\mu},
\]
establishing \eqref{eq_new_4_5_1}. 
\end{proof}

Now applying to $(h\p_t)^{-1}$ to  \eqref{eq_new_4_3}, we get 
\begin{equation}
\label{eq_new_4_6_1}
a+ (h\p_t)^{-1} h^2Q(x,D_x)a=\p_{t}^{-1}v.
\end{equation}
Here $\p_{t}^{-1}v$ is a classical analytic symbol in $\Omega_0$. 
To proceed, we need the following result. 
\begin{lem}
\label{lem_analytic_symbol_2}
Let $a\in \mathcal{A}_\mu$. Then $(h\p_t)^{-1} h^2Q(x,D_x)a\in \mathcal{A}_{\mu}$ with 
\begin{equation}
\label{eq_new_4_7}
\|(h\p_t)^{-1} h^2Q(x,D_x)a\|_{\mu}\le \mathcal{O}(\mu)\|a\|_{\mu}
\end{equation}
\end{lem}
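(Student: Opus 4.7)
The plan is to reduce the statement to Lemma~\ref{lem_analytic_symbol_1} after first showing that $b := h^2 Q(x,D_x) a$ lies in $\mathcal{A}_\mu$ with $\|b\|_\mu \le \mathcal{O}(\mu^2) \|a\|_\mu$. Writing $a = \sum_{k=0}^\infty h^k a_k$, we have $b = \sum_{j=2}^\infty h^j b_j$ with $b_j = Q(x,D_x) a_{j-2}$, so the series starts at order $j=2$, precisely matching the hypothesis of Lemma~\ref{lem_analytic_symbol_1}.

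The main step is to estimate $\sup_{\Omega_s} |b_j|$ by a Cauchy-type argument on the nested neighborhoods. Since $Q$ is a second-order holomorphic differential operator with coefficients bounded on $\overline{\Omega_0}\subset \Omega$, it suffices to control $\partial^\alpha a_{j-2}$ on $\Omega_s$ for $|\alpha|\le 2$. Given $s\in(0,r)$ and $j\ge 3$, I would choose an intermediate parameter $s^\ast = s(j-2)/j \in (0,s)$, so that $s-s^\ast = 2s/j$. By property (ii) of the nested family, for any $x \in \Omega_s$ the disk $B_{\C^n}(x,\delta(s-s^\ast))\subset \Omega_{s^\ast}$, and Cauchy's inequality yields
\[
|\partial^\alpha a_{j-2}(x)| \le \frac{\alpha!}{(\delta(s-s^\ast))^{|\alpha|}}\sup_{\Omega_{s^\ast}} |a_{j-2}| \le \frac{2}{(2\delta s/j)^2}\,\frac{f(a,j-2)(j-2)^{j-2}}{(s(j-2)/j)^{j-2}}
\]
for $|\alpha|=2$, the lower-order terms being dominated. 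The $(j-2)^{j-2}$ and $(j-2)^{-(j-2)}$ factors cancel, the remaining $(j/(j-2))^{j-2}\le e^2$ is absorbed into a constant, and after multiplying by the uniform bound on the coefficients of $Q$ and the $j^2$ from the Cauchy loss we obtain $\sup_{\Omega_s}|b_j| \le C\, f(a,j-2)\, j^j/s^j$, i.e.\ $f(b,j)\le C f(a,j-2)$. The cases $j=2$ (and $j=3$, if needed) are handled separately with $s^\ast = s/2$, absorbing everything into the constant $C$ since no negative power of $s^\ast$ appears.

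Summing, $\|b\|_\mu = \sum_{j=2}^\infty f(b,j)\mu^j \le C\sum_{j=2}^\infty f(a,j-2)\mu^j = C\mu^2\|a\|_\mu$, which gives $b\in\mathcal{A}_\mu$ with norm $\mathcal{O}(\mu^2)\|a\|_\mu$. Since the expansion of $b$ starts at $j=2$, Lemma~\ref{lem_analytic_symbol_1} applies and yields
\[
\|(h\partial_t)^{-1}b\|_\mu \le \mathcal{O}(1/\mu)\|b\|_\mu \le \mathcal{O}(\mu)\|a\|_\mu,
\]
which is exactly~\eqref{eq_new_4_7}.

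The main technical obstacle is the balancing in the choice of $s^\ast$: the Cauchy estimate for two derivatives costs a factor $(s-s^\ast)^{-2}$, while restricting from $\Omega_s$ to $\Omega_{s^\ast}$ inflates the bound by $(s/s^\ast)^{j-2}$. Setting $s-s^\ast\sim s/j$ is precisely the scaling that turns the $(j-2)^{j-2}$ growth inherited from $a$ into the required $j^j$ growth for $b$, up to a harmless factor $e^{O(1)}$; any slower shrinkage would fail to absorb the $k^k$ weight in the definition of $\mathcal{A}_\mu$.
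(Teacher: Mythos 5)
Your proposal is correct and follows essentially the same route as the paper: split off $b = h^2 Q a$, show $f(b,j)\le C\,f(a,j-2)$ via property (ii) of the nested neighborhoods and a Cauchy estimate with the intermediate parameter $s_2 = s\,(j-2)/j$ (exactly the paper's choice), then invoke Lemma~\ref{lem_analytic_symbol_1}. The only cosmetic inefficiency is that you introduce a factor $(j/(j-2))^{j-2}\le e^2$ to be absorbed into the constant, whereas with your choice of $s^\ast$ the $(j-2)^{j-2}$ factors cancel exactly and the $j^{j-2}$ appears directly; this does not affect the argument.
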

\begin{proof}
Writing $a(x)=\sum_{k=0}^\infty a_k(x)h^k$, we get 
\[
h^2Q(x,D_x)a=\sum_{k=2}^\infty h^kQ(x,D_x)a_{k-2}.
\]
For $s_1>s_2$, in view of the property (ii) of the "nested neighborhoods" $\Omega_s$,  and \eqref{eq_new_4_4}, we obtain for $k=2,3,\dots$ that 
\begin{equation}
\label{eq_new_4_8}
\sup_{\Omega_{s_1}}|Q(x,D_x)a_{k-2}|\le \frac{C}{(s_1-s_2)^2}\sup_{\Omega_{s_2}}|a_{k-2}|\le  \frac{C}{(s_1-s_2)^2} \frac{f(a,k-2)}{s_2^{k-2}}(k-2)^{k-2}. 
\end{equation}
 The Cauchy estimate was used here in the first inequality. Taking $0< s_2=\frac{k-2}{k}s_1<s_1$ for $k=3,4,\dots$, we get from \eqref{eq_new_4_8} that 
\[
\sup_{\Omega_{s_1}}|Q(x,D_x)a_{k-2}|\le \frac{Cf(a,k-2)}{s_1^k}k^k,
\]
and therefore, in view of \eqref{eq_new_4_4}, 
\[
f(Q(x,D_x)a_{k-2},k)\le Cf(a,k-2). 
\]
Thus, by the definition of $\norm{\ccdot}_\mu$, see~\eqref{eq_new_4_4_1}, we obtain that 
\begin{equation}
\label{eq_new_4_9}
\|h^2Q(x,D_x)a\|_{\mu}\le \sum_{k=2}^\infty \mu^k Cf(a,k-2)\le \mathcal{O}(\mu^2)\|a\|_\mu. 
\end{equation}
Lemma \ref{lem_analytic_symbol_1} together with \eqref{eq_new_4_9} implies that 
\[
\|(h\p_t)^{-1} h^2Q(x,D_x)a\|_{\mu}\le \mathcal{O}(1/\mu)\|h^2Q(x,D_x)a\|_{\mu}\le 
\mathcal{O}(\mu)\|a\|_{\mu},
\]
establishing \eqref{eq_new_4_7}. 
\end{proof}
It follows from Lemma \ref{lem_analytic_symbol_2} that 
\[
\|\tilde Q^jw\|_{\mu} \le \mathcal{O}(\mu^j)\|w\|_\mu, \quad \tilde Q:=(h\p_t)^{-1} h^2Q(x,D_x),
\]
for $w\in \mathcal{A}_\mu$, and therefore, by Neumann series argument, we have that
the equation \eqref{eq_new_4_6_1},
\[
a+ (h\p_t)^{-1} h^2Q(x,D_x)a=\p_{t}^{-1}v,
\]
has a unique solution $a$ with $\|a\|_{\mu}<\infty$ for $\mu>0$ small enough. Thus, $a$ is  a classical analytic symbol in $\Omega_0$.  

We shall next proceed to solve \eqref{eq_new_4_2} when the interval $J$ is not necessarily symmetric with respect to the origin, $\overline{J}=[a,b]$ where  $a<0<b$. Without loss of generality, we may assume that $0<a+b$. Let $N\ge 0$ be the largest integer such that $(N+1)|a|<b$.  We first solve  \eqref{eq_new_4_2} with the initial condition prescribed at $t=0$ in the symmetric interval $[a, |a|]$ and obtain a unique classical analytic symbol $a^{(0)}$ in a complex neighborhood of $[a, |a|]\times \{0\}$. Next  we solve the initial value problem,
\begin{equation}
\label{eq_new_4_10}
\begin{cases}
(h\p_t +h^2Q(x,D_x))a^{(1)}=0,\\
a^{(1)}|_{t=|a|}=a^{(0)}|_{t=|a|},
\end{cases}
\end{equation}
in a complex neighborhood of  $[0, 2|a|]\times \{0\}$. Continuing this process and working the symmetric intervals of the form $[(j-1)|a|, (j+1)|a|]$, $j=2,\dots, N$,  we construct a classical analytic symbol in a complex neighborhood of $[a,(N+1)|a|]\times \{0\}$ solving \eqref{eq_new_4_2}.  Finally solving \eqref{eq_new_4_10} with the initial condition prescribed at $t=(N+1)|a|$ in a complex neighborhood of $[2(N+1)|a|-b, b]\times \{0\}$, we obtain  a classical analytic symbol in a complex neighborhood of $[a,b]\times \{0\}$ solving \eqref{eq_new_4_2}. 

Furthermore, demanding that $a|_{t=0}$ should be an elliptic classical analytic symbol near $0$ in $\C^{n-1}$, we conclude that the classical analytic symbol $a(x;h)$ is elliptic in the sense that $a_0\ne 0$. This completes  the construction of the amplitude as an elliptic classical analytic symbol.

It follows from  \eqref{eq_new_4_1} that for all $N\ge 1$, 
\[
e^{-is \varphi}(-h^2\Delta_g-(hs)^2)e^{is \varphi}\bigg( \sum_{j=0}^N h^j a_j\bigg)=h^{N+2}(-\Delta_g a_N+\lambda (L_0+\Delta_g\varphi)a_N)
\]
in a complex neighborhood $\tilde U$ of $\Gamma$. 
Using \eqref{eq_4_2} and Cauchy's estimates, we obtain after an arbitrarily small decrease of $\tilde U$ that 
\[
\bigg| e^{-is \varphi}(-h^2\Delta_g-(hs)^2)e^{is \varphi}\bigg( \sum_{k=0}^N h^k a_k\bigg)\bigg|\le h^{N+2}C^{N+1}N^N.
\]
Choosing $N=N(h)=[\frac{1}{heC}]$, we obtain  that 
\[
\bigg| e^{-is \varphi}(-h^2\Delta_g-(hs)^2)e^{is \varphi}\bigg( \sum_{j=0}^{N(h)} h^k a_k\bigg)\bigg|\le \mathcal{O}(1) e^{-\frac{1}{C_1 h}}, \quad C_1>0,
\]
for all $0<h\le 1$.  Note that we also have
\[
\bigg| \sum_{j=0}^{N(h)} h^k a_k\bigg|\le C\sum_{k=0}^{N(h)} e^{-k}\le C\frac{e}{e-1}. 
\]
In the estimate above we used $k\leq N(h)=[1/(heC)]$ and $\abs{a_k}\leq C^{k+1}k^k$, which holds since $a$ is classical analytic symbol.

Let $\chi\in C_0^\infty(\R^{n-1})$ be such that $0\le \chi\le 1$, $\chi=1$ for $|y|\le 1/4$ and $\chi=0$ for $|y|\ge 1/2$. 
In view of \eqref{eq_3_0_quasimode} we set 
\begin{equation}\label{eq:v_defined}
v(t,y;h)=h^{-\frac{(n-1)}{4}}e^{is\varphi(t,y)}a(t,y;h), \quad a(t,y;h)= \bigg(\sum_{k=0}^{N(h)} h^k a_k\bigg)\chi\bigg(\frac{y}{\delta'}\bigg).
\end{equation}
Here $\delta'>0$ is chosen sufficiently small so that $\chi(y/\delta')$ is zero outside the set where we have constructed the functions $\varphi$ and $a_k$.
Since $\text{Im}\,\varphi(t,y)\ge \frac{|y|^2}{C}$, $C>0$, the cutoff function $\chi$ does not destroy the exponential smallness of the error, and we see that $v$ satisfies after an arbitrarily small decrease of the real domain $U$, 
\begin{equation}
\label{eq_4_3}
\|v\|_{L^2(U)}\asymp 1, \quad \|(-h^2\Delta-(hs)^2)v\|_{L^2(U)}=\mathcal{O}(e^{-\frac{1}{Ch}}), \quad C>0, 
\end{equation}
as $h\to 0$. Here for the first bound,  we use the fact that $a_0\ne 0$.

\subsection{Gluing the local quasimodes together} Let us now return to the open Fermi cover $(U_j)_{j=0}^{N+1}$ of $\gamma([-T_1-\varepsilon, T_2+\varepsilon])$, replacing it if necessary by a slightly smaller relatively compact subcover. We have constructed  $\varphi_0$ real analytic in $U_0$ solving the Cauchy problem \eqref{eq_3_3} in a complex neighborhood of $U_0$ so that $\varphi_0(t,0)=t$  for all $t\in I_0$ and $\text{Im}\, \varphi_0''(t,0)>0$ for all $t\in I_0$. In order to construct  $\varphi=\varphi_1$ real analytic in $U_1$, we pick $t_0\in I_0\cap I_1$ and solve in a complex neighborhood of $U_1$,
\[
\begin{cases}
p(x,\varphi'_x(x))=0,\\
\varphi|_{t=t_0}=\varphi_0(t_0,y),\\
\varphi'_x(t_0)=\xi(t_0).
\end{cases}
\]
We get $\varphi_1$ such that $\varphi_1=\varphi_0$ near $(t_0,0)\in U_0\cap U_1$,   and thus, by unique continuation, $\varphi_1=\varphi_0$ in  $U_0\cap U_1$, assuming as we may that $U_0\cap U_1$ is connected.
Note that in view of \eqref{eq_3_17_new_0}, we have $\varphi_1(t,0)=t$ for all $t\in I_1$. Continuing in this way, we obtain $\varphi_j$ real analytic in $U_j$, $0\le j\le N+1$, such that $\varphi_j=\varphi_{j+1}$ in $U_j\cap U_{j+1}$ and \eqref{eq_3_4} holds for every $\varphi_j$. 

Next let $a^{(0)}(t,y;h)$ be an elliptic classical analytic symbol in a complex neighborhood of $U_0$ obtained by solving \eqref{eq_new_4_1_1}. To get $a^{(1)}(t,y;h)$, we solve the sequence of transport equations \eqref{eq_new_4_1_1} with $\varphi=\varphi_1$ and with $a^{(1)}|_{t=t_0}=a^{(0)}|_{t=t_0}$. Thus, by uniqueness and analytic continuation, $a^{(1)}=a^{(0)}$ in $U_0\cap U_1$. Continuing in the same way, we get $v_0,v_1,\dots, v_{N+1}$ such that 
\begin{equation}
\label{eq_equal_quasi}
v_j=v_{j+1}\quad\text{in}\quad U_j\cap U_{j+1}. 
\end{equation}

Let $\chi_j=\chi_j(t)\in C^\infty_0(I_j)$ be such that $\sum_{j=0}^{N+1} \chi_j=1$ near $[-T_1-\varepsilon, T_2+\varepsilon]$, and define our quasimode $v$ globally by 
\[
v=\sum_{j=0}^{N+1}\chi_j v_j. 
\]

Let $p_1,\dots,p_R\in X^\mathrm{int}$ be the distinct points where the geodesic $\gamma$ self-intersects, and let $-T_1< t_1<\dots<t_{N}<T_2$  be the times of self-intersections. Let $V_1,\dots, V_R$ be small neighborhoods in $X$ around $p_j$, $j=1,\dots, R$. Then choosing $\delta'$ in~\eqref{eq:v_defined} small enough we obtain an open cover of a neighborhood of $\gamma[-T_1,T_2]$ in $\hat X$, 
\begin{equation}
\label{eq_open_cover_sup}
\supp(v(\ccdot ; h))\cap X\subset (\cup_{j=1}^R V_j)\cup (\cup_{k=1}^S W_k),
\end{equation}
where in each $V_j$, the quasimode is a finite sum,
\begin{equation}
\label{eq_rep_vs_1}
v(\ccdot; h)|_{V_j}=\sum_{l: \gamma(t_l)=p_j} v_{l}(\ccdot; h),
\end{equation}
and in each $W_k$ (where there are no self-intersecting points), in view of \eqref{eq_equal_quasi}, there is some $l(k)$ so that the quasimode is given by 
\begin{equation}
\label{eq_rep_vs_2}
v(\ccdot; h)|_{W_k}=v_{l(k)}(\ccdot; h).
\end{equation}

Finally, the bounds \eqref{eq_prop_gaussian_1}  follows from the bounds \eqref{eq_4_3}, and the representations \eqref{eq_rep_vs_1} and \eqref{eq_rep_vs_2} of $v$. This completes the proof of Theorem \ref{thm_main_1}.

\section{Some preliminary results about geodesics}

\label{sec_geodesics}

Let $(X,g)$ be a  compact Riemannian manifold of dimension $n\ge 2$ with smooth boundary, contained in an open real analytic manifold $(\hat X, g)$ of the same dimension with $g$ real analytic in $\hat X$. First we have the following analog of \cite[Lemma 2.1]{DKuLLS_2018}, established in this work in the smooth case. 

\begin{lem}
\label{lem_geodesics_1}
Let $\alpha_0=(x_0,\xi_0)\in S^*X^{\mathrm{int}}$ and let $\zeta_1,\zeta_2\in S^*_{x_0}X^{\mathrm{int}}$ be such that 
\begin{equation}
\label{eq_6_1}
\zeta_1+\zeta_2=t_0\xi_0
\end{equation}
for some $0<t_0<2$. Then there exists a neighborhood $U$ of $\alpha_0$ in $S^*X^{\mathrm{int}}$ and a real analytic map
\[
I:U\to S^*X^{\mathrm{int}}\times S^*X^{\mathrm{int}}, \quad (x,\xi)\mapsto (x,\omega_1(x,\xi))\times (x,\omega_2(x,\xi))
\]
such that 
\begin{equation}
\label{eq_6_2}
I(x_0,\xi_0)=(x_0,\zeta_1)\times (x_0,\zeta_2),
\end{equation}
and 
\begin{equation}
\label{eq_6_3}
\omega_1(x,\xi)+\omega_2(x,\xi)=t_0\xi, \quad (x,\xi)\in U. 
\end{equation}
\end{lem}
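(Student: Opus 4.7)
The plan is to reduce the problem to constructing a single real analytic unit covector field $e(x,\xi)$ orthogonal to $\xi$ near $(x_0, \xi_0)$, after which the desired $\omega_1, \omega_2$ are given by an explicit algebraic formula.

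First, observe that given $\abs{\xi}_g = 1$, the system $\omega_1+\omega_2 = t_0\xi$, $\abs{\omega_1}_g = \abs{\omega_2}_g = 1$ is equivalent, via $\omega_2 := t_0\xi - \omega_1$, to the two scalar conditions $g(x)(\omega_1,\omega_1) = 1$ and $g(x)(\xi,\omega_1) = t_0/2$; in particular $g(x_0)(\zeta_1,\xi_0) = t_0/2$. I claim that $\zeta_1$ and $\xi_0$ are linearly independent: if $\zeta_1 = c\xi_0$ with $\abs{c}=1$, then $\zeta_2 = (t_0 - c)\xi_0$ would satisfy $\abs{t_0-c}=1$, forcing $t_0 \in \{0,\pm 2\}$ and contradicting $0 < t_0 < 2$. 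Consequently,
\[
\eta_0 := \frac{\zeta_1 - (t_0/2)\xi_0}{\sqrt{1 - t_0^2/4}} \in S^*_{x_0}X^{\mathrm{int}}
\]
is a well-defined unit covector with $g(x_0)(\eta_0, \xi_0) = 0$.

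To propagate $\eta_0$ in a real analytic way, I fix a real analytic chart near $x_0$ and view $\eta_0$ as a constant vector $\tilde e \in \R^n$ in that chart. Applying Gram--Schmidt with respect to $g(x)$, set
\[
e(x, \xi) := \frac{\tilde e - g(x)(\tilde e, \xi)\,\xi}{\abs{\tilde e - g(x)(\tilde e, \xi)\,\xi}_{g(x)}}.
\]
Since $g(x_0)(\tilde e, \xi_0) = 0$ and $\abs{\tilde e}_{g(x_0)} = 1$, the denominator is nonzero at $(x_0,\xi_0)$, so $e$ is real analytic on some neighborhood $U$ of $(x_0,\xi_0)$ in $S^*X^{\mathrm{int}}$ with $e(x_0, \xi_0) = \eta_0$, $g(x)(e,\xi) = 0$, and $\abs{e}_{g(x)} = 1$. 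Now define
\[
\omega_1(x, \xi) := \frac{t_0}{2}\xi + \sqrt{1 - t_0^2/4}\, e(x,\xi), \qquad \omega_2(x,\xi) := t_0\xi - \omega_1(x,\xi).
\]
A direct computation using $g(x)(\xi,e)=0$, $\abs{e}_{g(x)}=1$ and $\abs{\xi}_{g(x)}=1$ yields $\abs{\omega_1}_{g(x)} = \abs{\omega_2}_{g(x)} = 1$ and $\omega_1+\omega_2 = t_0\xi$ throughout $U$; evaluating at the base point gives $\omega_1(x_0,\xi_0) = (t_0/2)\xi_0 + (\zeta_1 - (t_0/2)\xi_0) = \zeta_1$ and hence $\omega_2(x_0,\xi_0) = \zeta_2$. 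Real analyticity of the map $I$ is inherited from that of $e$ and of $g$.

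The only subtle point is the linear independence of $\zeta_1$ and $\xi_0$, which both makes $\eta_0$ well defined and guarantees nonvanishing of the Gram--Schmidt denominator; this is precisely where the hypothesis $0 < t_0 < 2$ is used. The remainder is a routine combination of Gram--Schmidt and elementary algebra, and would go through verbatim in the smooth setting of \cite{DKuLLS_2018}, with the present upgrade to real analyticity coming solely from analyticity of $g$.
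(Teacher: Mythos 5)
Your proof is correct and is set up so that the formula for $\omega_1$ comes out in closed form, whereas the paper arrives at it by a different route. The paper writes $\tilde\omega_1(x,\xi)=\zeta(x)+\alpha(x,\xi)\xi$, with $\zeta(x)$ the pointwise normalization of $\zeta_1$, normalizes $\tilde\omega_1$ to get $\omega_1$, and then determines $\alpha$ by solving a quadratic $F(x,\xi,\alpha)=0$ via the implicit function theorem, with $0<t_0<2$ being precisely what makes $F'_\alpha(x_0,\xi_0,0)\neq 0$. You instead decompose $\zeta_1$ orthogonally at the base point, propagate only the orthogonal part $\eta_0$ by Gram--Schmidt against $\xi$ (with $0<t_0<2$ now guaranteeing that $\eta_0$ is well defined and that the Gram--Schmidt denominator does not vanish), and then write $\omega_1=(t_0/2)\xi+\sqrt{1-t_0^2/4}\,e(x,\xi)$ directly. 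The two constructions exploit the same key facts (that $g(x)(\omega_1,\xi)=t_0/2$ is forced, and that $0<t_0<2$ makes the construction nondegenerate), but yours sidesteps the implicit function theorem and makes real analyticity immediate, at the cost of a slightly less symmetric presentation. Both arguments go through verbatim in the smooth and real analytic settings alike.
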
 

\begin{proof} We follow the proof of  \cite[Lemma 2.1]{DKuLLS_2018}  with minor changes in the real analytic setting, and the argument is presented here only for the convenience of the reader. 

Let $x_1, \dots, x_n$ be real analytic local coordinates on $X^{\mathrm{int}}$ centered at $x_0$ such that 
$G(0)=1$. Here $G = G(x) = (g^{jk})$ is the co-metric tensor. It follows from \eqref{eq_6_1} upon taking the scalar product with $\zeta_1$, $\zeta_2$, and $\xi_0$, that $\zeta_1\cdot \xi_0=\zeta_2\cdot\xi_0$ and $t_0=2 \zeta_1\cdot \xi_0$.  Similarly, if  \eqref{eq_6_3} holds, then $t_0=2G(x)\omega_1(x,\xi)\cdot \xi$, and we therefore should have 
\begin{equation}
\label{eq_6_4}
G(x)\omega_1(x,\xi)\cdot \xi=\zeta_1\cdot \xi_0.
\end{equation}

Furthermore,  if \eqref{eq_6_3} is valid, then 
\[
\omega_2(x,\xi)= 2 (\zeta_1\cdot \xi_0)\xi -  \omega_1(x,\xi).
\]
Note that this implies that $|\omega_2(x,\xi)|_{G(x)}=1$, provided that \eqref{eq_6_4} holds, and therefore, we only need to determine $\omega_1(x,\xi)\in S^*X^{\mathrm{int}}$ depending analytically on $(x,\xi)$ such that $\omega_1(x_0,\xi_0) = \zeta_1$ and \eqref{eq_6_4} is valid.

To that end, let us set 
\[
\zeta(x)=\frac{\zeta_1}{\sqrt{G(x)\zeta_1\cdot \zeta_1}}. 
\]
Then $\zeta(x)$ is real analytic in $x$ and $\zeta(0)=\zeta_1$.   Let 
\[
\tilde \omega_1(x,\xi)=\zeta(x)+\alpha(x,\xi)\xi,
\]
for $(x,\xi)\in \text{neigh}((x_0,\xi_0), S^*\R^n)$, with some $\alpha=\alpha(x,\xi)$ to be chosen. We have 
\[
G(x)\tilde \omega_1(x,\xi)\cdot \tilde \omega_1(x,\xi)=1+\alpha^2+2\alpha G(x)\zeta(x)\cdot \xi. 
\]
We set 
\begin{equation}
\label{eq_6_5}
\omega_1(x,\xi)=\frac{\tilde \omega_1(x,\xi)}{\sqrt{G(x)\tilde \omega_1(x,\xi)\cdot \tilde \omega_1(x,\xi)}}= \frac{\zeta(x)+\alpha\xi}{\sqrt{1+\alpha^2+2\alpha G(x)\zeta(x)\cdot \xi}}.
\end{equation}
We would like to find $\alpha$ so that $\alpha(0,\xi_0)=0$ and that \eqref{eq_6_4} holds. The former requirement guarantees that 
$\omega_1(0,\xi_0)=\zeta_1$, and  the latter requirement implies that we should have  
\[
F(x,\xi,\alpha)=0,
\]
where 
\[
F(x,\xi,\alpha):=(1-(\zeta_1\cdot \xi_0)^2)\alpha^2+2 G (x)\zeta(x)\cdot \xi (1-(\zeta_1\cdot\xi_0)^2)\alpha + (G (x)\zeta(x)\cdot \xi)^2- (\zeta_1\cdot\xi_0)^2.
\]
Note that $F$ is real analytic in $(x,\xi,\alpha)\in \text{neigh}((0,\xi_0), S^*\R^n)\times \R$,  $F(0,\xi_0,0)=0$, and $F_\alpha'(0,\xi_0, 0)=2(\zeta_1\cdot \xi_0)(1-(\zeta_1\cdot \xi_0)^2)\ne 0$, as $0<\zeta_1\cdot\xi_0=\frac{t_0}{2}<1$. Thus, by the implicit function theorem, there is a neighborhood $U$ of $(0,\xi_0)$ and a unique real analytic function $\alpha(x,\xi)$ in $U$ such that $\alpha(0,\xi_0)=0$ and $F(x,\xi,\alpha)=0$ if and only if $\alpha=\alpha(x,\xi)$.  
Hence, $\omega_1(x,\xi)$ given in \eqref{eq_6_5} satisfies the conditions of the proposition. This completes the proof. 
\end{proof}

We shall also need the following result. 

\begin{lem}
\label{lem_geodesic_2}
Let $\alpha_0=(x_0,\xi_0)\in S^*X^{int}$. Assume that $\alpha_0$ is generated by an admissible pair of geodesics $\gamma_1(\alpha_0):[-T_1(\alpha_0), T_2(\alpha_0)]\to X$ and $\gamma_2(\alpha_0):[-S_1(\alpha_0), S_2(\alpha_0)]\to X$, where $0<T_1(\alpha_0), T_2(\alpha_0), S_1(\alpha_0), S_2(\alpha_0)<\infty$.  Then there exists a neighborhood $V$ of $\alpha_0$ in $S^*X^{int}$ such that every point $\alpha=(\alpha_x,\alpha_\xi)\in V$ is generated by an admissible pair of geodesics $\gamma_1(\alpha):[-T_1(\alpha), T_2(\alpha)]\to X$ and $\gamma_2(\alpha):[-S_1(\alpha), S_2(\alpha)]\to X$,  which depend real analytically on $\alpha$. 
\end{lem}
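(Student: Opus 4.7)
The strategy is to evolve real-analytic families of initial velocities supplied by Lemma~\ref{lem_geodesics_1} under the geodesic flow, and to check that each admissibility clause of Definition~\ref{def_admissible} is stable under small perturbations of $\alpha$. Set $\zeta_j := \dot{\gamma}_j(\alpha_0)(0) \in S^*_{x_0} X^{\mathrm{int}}$ for $j=1,2$; since $\zeta_1+\zeta_2 = t_0 \xi_0$ with $0<t_0<2$, Lemma~\ref{lem_geodesics_1} produces a neighborhood $U$ of $\alpha_0$ and real-analytic maps $\omega_j \colon U \to S^*X^{\mathrm{int}}$ with $\omega_j(\alpha_0)=\zeta_j$ and $\omega_1(\alpha)+\omega_2(\alpha)=t_0 \alpha_\xi$ on $U$. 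Define $\gamma_j(\alpha)$ to be the geodesic in $\hat X$ with initial point $\alpha_x$ and initial covector $\omega_j(\alpha)$ (identified with a tangent vector via the Riemannian duality). Because $g$ is real analytic on $\hat X$, the geodesic flow is real analytic, so $\gamma_j(\alpha)(t)$ depends real-analytically on $(\alpha,t)$, and conditions~(i)--(ii) of Definition~\ref{def_admissible} hold on $U$ with the same $t_0$. To fix the endpoints, let $\rho\in C^{\infty}(\hat X)$ be a boundary defining function for $\partial X$. Nontangentiality at $\alpha_0$ gives $\frac{d}{dt}\rho(\gamma_1(\alpha_0)(t))\neq 0$ at $t=T_2(\alpha_0)$, so the implicit function theorem produces $C^{\infty}$ exit times $T_1(\alpha),T_2(\alpha),S_1(\alpha),S_2(\alpha)$; nontangentiality is an open condition and persists after shrinking $U$.

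The core of the argument is the preservation of clause~(iii). In local coordinates near $x_0$, consider
\[
F_\alpha(t,s) := \gamma_1(\alpha)(t) - \gamma_2(\alpha)(s).
\]
Then $F_\alpha(0,0)=0$ identically, and the differential $DF_{\alpha_0}(0,0)$ is the linear map with columns $\zeta_1$ and $-\zeta_2$. These vectors are independent: $|\zeta_j|=|\xi_0|=1$ together with $\zeta_1+\zeta_2=t_0\xi_0$, $0<t_0<2$, rules out both $\zeta_1=\zeta_2$ (which would force $t_0=2$) and $\zeta_1=-\zeta_2$ (which would force $t_0=0$). Hence $F_{\alpha_0}$ is an immersion at $(0,0)$, and joint continuity of $DF_\alpha(t,s)$ in $(\alpha,t,s)$ together with the quantitative inverse function theorem yield a fixed neighborhood $W$ of $(0,0)$ on which $F_\alpha^{-1}(0)\cap W=\{(0,0)\}$ for all $\alpha$ close to $\alpha_0$. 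For the complement, enlarge the reference rectangle and fix a compact $R_0$ containing $[-T_1(\alpha),T_2(\alpha)]\times[-S_1(\alpha),S_2(\alpha)]$ for all $\alpha$ near $\alpha_0$ (possible by Step~1). By admissibility of $\alpha_0$, the continuous function $(t,s)\mapsto \dist_g(\gamma_1(\alpha_0)(t),\gamma_2(\alpha_0)(s))$ is bounded below by some $c>0$ on the compact set $R_0\setminus W$, and by continuous dependence of geodesics on initial data this lower bound persists, say as $c/2$, for $\alpha$ in a smaller neighborhood. This excludes intersections in $R(\alpha)\setminus W$. The self-intersection statements $\gamma_j(\alpha)(t)=\alpha_x\Leftrightarrow t=0$ follow from the same dichotomy applied to the map $t\mapsto \gamma_j(\alpha)(t)-\alpha_x$, using $\dot{\gamma}_j(\alpha_0)(0)\neq 0$ for local injectivity and compactness for the exterior.

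The main obstacle is precisely the verification of clause~(iii): one must fuse the \emph{local} transversality that comes from the strict inequality $0<t_0<2$ (which makes $F_{\alpha_0}$ an immersion at the origin and so precludes nearby intersections) with an \emph{independent} compactness argument on the complementary region, while at the same time accommodating that the time intervals themselves move with $\alpha$. Because $\partial X$ is only smooth, the exit times $T_i(\alpha),S_i(\alpha)$ are only smooth, not analytic, in $\alpha$; the ``real analytic dependence on $\alpha$'' asserted in the conclusion therefore refers to the initial data $(\alpha_x,\omega_j(\alpha))$ propagated by the real-analytic geodesic flow, not to the domains of the geodesic segments.
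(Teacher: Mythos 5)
Your plan reproduces the skeleton of the paper's proof (start from Lemma~\ref{lem_geodesics_1}, evolve under the real-analytic geodesic flow, note nontangentiality is open, and then check condition~(iii) of admissibility by a local-plus-global split), but the key step---stability of (iii)---is carried out differently. The paper argues by contradiction: it supposes sequences $\alpha_k\to\alpha_0$, $t_k,s_k$ with $\gamma_1(\alpha_k)(t_k)=\gamma_2(\alpha_k)(s_k)$, passes to limits, invokes admissibility of $\alpha_0$ to force $t_\ast=s_\ast=0$, and then gets a contradiction with the positive injectivity radius of a closed extension of $X$, since two distinct unit-speed geodesics through the same interior point cannot intersect again within a geodesic ball of radius $\mathrm{Inj}(X)$. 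Your argument replaces the injectivity-radius step by a quantitative inverse function theorem applied to $F_\alpha(t,s)=\gamma_1(\alpha)(t)-\gamma_2(\alpha)(s)$ (whose differential at $(0,0)$ is an injection because $\zeta_1,\zeta_2$ are unit and $0<t_0<2$), and replaces the contradiction-by-sequences step by a direct distance lower bound on a complementary compact region. Both implementations are correct in spirit and achieve the same uniformity in $\alpha$; the paper's version is perhaps more economical since it quotes one global geometric invariant rather than a uniform IFT.

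One point in your compactness step deserves more care. You enlarge the reference rectangle to a fixed compact $R_0$ containing all $[-T_1(\alpha),T_2(\alpha)]\times[-S_1(\alpha),S_2(\alpha)]$ for $\alpha$ near $\alpha_0$, and then assert that admissibility of $\alpha_0$ makes $\dist_g(\gamma_1(\alpha_0)(t),\gamma_2(\alpha_0)(s))$ bounded below by $c>0$ on $R_0\setminus W$. Admissibility only furnishes positivity on the \emph{original} rectangle $R(\alpha_0)\setminus W$; on $R_0\setminus R(\alpha_0)$ the two geodesics have been extended past $\partial X$ into $\hat X$, and a priori nothing rules out an intersection there. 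This can be repaired: since $x_0\in X^{\mathrm{int}}$, admissibility forces $\gamma_1(\alpha_0)(\pm T_i(\alpha_0))\neq\gamma_2(\alpha_0)(s)$ for every $s$ and $\gamma_1(\alpha_0)(t)\neq\gamma_2(\alpha_0)(\pm S_i(\alpha_0))$ for every $t$ in their respective closed intervals, so the distance function is strictly positive on the boundary parameter strips, and by continuity (plus a limiting argument of the same flavor you already use) it remains positive on $R_0\setminus W$ once $R_0$ is a sufficiently thin enlargement. Alternatively you could set $R_0:=R(\alpha_0)$ and handle the thin shell $R(\alpha)\setminus R(\alpha_0)$ separately, or simply adopt the paper's sequential argument which sidesteps the problem because the limiting $(t_\ast,s_\ast)$ automatically lies in the closed rectangle $R(\alpha_0)$. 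As written the claim overreaches slightly; the fix is short but should be spelled out. Your closing remark about the regularity of the exit times versus the flow is a fair and accurate reading of what ``depend real analytically on $\alpha$'' can mean here.
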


\begin{proof}
First we have 
\[
\zeta_1+\zeta_2=t_0\xi_0,
\]
for some $0<t_0<2$, where $\zeta_j=\dot{\gamma}_j(\alpha_0)$ is viewed as a cotangent vector, using the Riemannian duality. By Lemma \ref{lem_geodesics_1}, there exists a neighborhood $U$ of $\alpha_0$ in $S^*X^{\mathrm{int}}$ and a 
real analytic map
\[
I:U\to S^*X^{\mathrm{int}}\times S^*X^{\mathrm{int}}, \quad \alpha=(\alpha_x,\alpha_\xi)\mapsto (\alpha_x,\omega_1(\alpha))\times (\alpha_x,\omega_2(\alpha))
\]
such that 
\[
I(x_0,\xi_0)=(x_0,\zeta_1)\times (x_0,\zeta_2),
\]
and 
\begin{equation}
\label{eq_6_6}
\omega_1(\alpha)+\omega_2(\alpha)=t_0\alpha_\xi. 
\end{equation}
The corresponding unit speed geodesics $\gamma_1(\alpha):[-T_1(\alpha), T_2(\alpha)]\to X$ and $\gamma_2(\alpha):[-S_1(\alpha), S_2(\alpha)]\to X$, $T_1(\alpha), T_2(\alpha), S_1(\alpha), S_2(\alpha)>0$,   such that $\gamma_j(\alpha)(0)=\alpha_x$, $\dot{\gamma}_j(\alpha)(0)=\omega_j(\alpha)$, are non-tangential being small perturbations of the non-tangential geodesics $\gamma_j(\alpha_0)$, $j=1,2$.  Hence, the functions $T_j(\alpha)$ and $S_j(\alpha)$ depend continuously on $\alpha\in U$ and in particularly, they are bounded after an arbitrarily small decrease of $U$. Note also that  \eqref{eq_6_6} implies that $\gamma_1(\alpha)$ and $\gamma_2(\alpha)$ are two distinct geodesics and that  are not reverses of each other.

We claim that there is a neighborhood $\tilde U\subset U$ of $\alpha_0$ such that for all $\alpha\in \tilde U$, we have 
\begin{equation}
\label{eq_6_6_0}
\gamma_1(\alpha)(t)=\gamma_1(\alpha)(0) \quad \Longleftrightarrow\quad  t=0. 
\end{equation}
Indeed, otherwise there exists a sequence $\alpha_k\to \alpha_0$ as $k\to \infty$ and $0\ne t_k\in [-T_1(\alpha_k), T_2(\alpha_k)]$ such that 
\begin{equation}
\label{eq_6_6_1}
\gamma_1(\alpha_k)(t_k)=\gamma_1(\alpha_k)(0),
\end{equation}
for all $k$. 
Assuming, as we may, that $t_k\to t_0$, we get from  \eqref{eq_6_6_1} that $\gamma_1(\alpha_0)(t_0)=\gamma_1(\alpha_0)(0)$. Since the geodesic $\gamma_1(\alpha_0)$ does not self-intersect at $x_0=\gamma_1(\alpha_0)(0)$, we conclude that $t_0=0$.  Since $\gamma_1(\alpha_k)(t_k)\to \gamma_1(\alpha_0)(0)=x_0\in X^{\mathrm{int}}$ as $k\to\infty$,  for all $k$ sufficiently large, we see that $\gamma_1(\alpha_k)(t_k)\in X^{\mathrm{int}}$.  As $X$ is compact, it has a positive injectivity radius $\text{Inj}(X)>0$. Here we have extended $X$ to a closed manifold to speak about the injectivity radius  and the boundary will not cause any problems as $\gamma_1(\alpha_k)(t_k)\in X^{\mathrm{int}}$, for $k$ sufficiently large. Now \eqref{eq_6_6_1} implies that $|t_k|\ge \text{Inj}(X)$ for all $k$ sufficiently large,  which is a contradiction as $t_k\to 0$. Thus, the claim \eqref{eq_6_6_0} follows.  The same is true for the family of geodesics $\gamma_2(\alpha)$ for $\alpha$ in a possibly smaller neighborhood of $\alpha_0$. 

Finally, we claim there is a neighborhood $V\subset \tilde U$ of $\alpha_0$ such that for all $\alpha\in V$, we have 
\begin{equation}
\label{eq_6_6_2}
\gamma_1(\alpha)(t)=\gamma_2(\alpha)(s) \quad \Longrightarrow \quad t=s=0. 
\end{equation}
Indeed, otherwise, there exists $\alpha_k\to \alpha_0$ as $k\to \infty$, and  $t_k\in [-T_1(\alpha_k), T_2(\alpha_k)]$, and $s_k\in  [-S_1(\alpha_k), S_2(\alpha_k)]$ such that 
\begin{equation}
\label{eq_6_7}
\gamma_1(\alpha_k)(t_k)=\gamma_2(\alpha_k)(s_k),
\end{equation}
$t_k\ne 0$, and $s_k\ne 0$, 
for all $k$.  Assuming as we may that $t_k\to t_0$ and $s_k\to s_0$ and passing to the limit in \eqref{eq_6_7}, we obtain that  
\[
\gamma_1(\alpha_0)(t_0)=\gamma_2(\alpha_0)(s_0),
\]
and therefore, as $\gamma_1(\alpha_0)$ and $\gamma_2(\alpha_0)$ are admissible, we get $t_0=s_0=0$. 
Thus, we get 
\begin{align*}
&\gamma_1(\alpha_k)(t_k)=\gamma_2(\alpha_k)(s_k)\to x_0, \\
&(\alpha_k)_x=\gamma_1(\alpha_k)(0)=\gamma_2(\alpha_k)(0)\to x_0,
\end{align*}
as $k\to \infty$. Note that for $k$ sufficiently large, all the points $\gamma_1(\alpha_k)(t_k)$, $\gamma_1(\alpha_k)(0)$, $\gamma_2(\alpha_k)(s_k)$, $\gamma_2(\alpha_k)(0)$ are in the interior of $X$.  Therefore, $|t_k|\ge \text{Inj}(X)$ and $|s_k|\ge \text{Inj}(X)$ for $k$ sufficiently large, as otherwise, the geodesics $\gamma_1(\alpha_k)$ and $\gamma_2(\alpha_k)$ would intersect at a geodesic ball centered at $(\alpha_k)_x$.  This contradicts the fact that $t_k\to 0$ and $s_k\to 0$ as $k\to \infty$, showing the claim. 
Hence, the pair of geodesics $\gamma_1(\alpha)$, $\gamma_2(\alpha)$ is admissible, for all $\alpha\in V$. 
\end{proof}

\section{Analytic families of exponentially accurate Gaussian beam quasimodes}

\label{sec_analytic_family_quasimodes}

When proving Theorem \ref{thm_main_2} below, we  shall need the following consequence of Theorem \ref{thm_main_1}. 
\begin{cor}
\label{cor_thm_main_1}
Let $(X,g)$ be a  compact Riemannian manifold of dimension $n\ge 2$ with smooth boundary, contained in an open real analytic manifold $(\hat X, g)$ of the same dimension with $g$ real analytic in $\hat X$. 
 Let $\alpha_0=(x_0,\xi_0)\in S^*X^{\mathrm{int}}$ and let 
$\gamma_0:[-T_1,T_2]\to X$, $0<T_1, T_2<\infty$, be   a unit speed nontangential geodesic such that $\gamma_0(0)=x_0$, and $\gamma_0$ does not have self-intersections at $x_0$. Let 
$\gamma(\alpha):[-T_1(\alpha),T_2(\alpha)]\to X$, $0<T_1(\alpha), T_2(\alpha)<\infty$, $\alpha=(\alpha_x,\alpha_\xi)\in \text{neigh}(\alpha_0,S^*X^{\mathrm{int}})$, be a real analytic family of  unit speed nontangential geodesics such that $\gamma(\alpha_0)=\gamma_0$, and $\gamma(\alpha)(0)=\alpha_x$. Let $\lambda\in \R$.  Then there is a real analytic family of $C^\infty$  functions $v(x, \alpha;h)$ on $X$, $\alpha\in \text{neigh}(\alpha_0,S^*X^{\mathrm{int}})$,  $0<h\le 1$, and $C>0$ such that $\supp(v(\ccdot, \alpha;h))$ is confined to a small neighborhood of $\gamma(\alpha)([-T_1(\alpha),T_2(\alpha)])$ for each $\alpha$, and 
\begin{equation}
\label{eq_prop_gaussian_1_new}
\| (-h^2\Delta_g-(hs)^2)v(\cdot, \alpha;h)\|_{L^2(X)}=\mathcal{O}(e^{-\frac{1}{Ch}}), \quad \|v(\cdot, \alpha;h)\|_{L^2(X)}\asymp 1, 
\end{equation}
as $h\to 0$, uniformly in $\alpha$.  Here $s=\frac{1}{h}+i\lambda$. The local structure of the family $v(\ccdot,\alpha;h)$ in a neighborhood of $\alpha_x$
is as follows:  
\[
v(x, \alpha ;h)= h^{-\frac{(n-1)}{4}}e^{is \varphi(x,\alpha)}a(x,\alpha;h), 
\]
where
$\varphi(x,\alpha)$ is real analytic in $(x,\alpha)$ for $\alpha \in \text{neigh}(\alpha_0,S^*X^{\mathrm{int}})$ and $|x-\alpha_x|<\frac{1}{c}$, $c>0$,  and $a(x,\alpha;h)$ is an elliptic classical analytic symbol near $(x_0,\alpha_0)$. Furthermore, for $t$ close to $0$ and $\alpha \in \text{neigh}(\alpha_0,S^*X^{\mathrm{int}})$, we have 
\begin{align*}
\varphi(\gamma(\alpha)(t),\alpha)=t, \quad \nabla \varphi(\gamma(\alpha)(t),\alpha)=\dot{\gamma}(\alpha)(t),\\ \emph{\text{Im}}\, (\nabla^2\varphi(\gamma(\alpha)(t)))\ge 0, \quad \emph{\text{Im}}\, (\nabla^2\varphi)|_{ \dot{\gamma}(\alpha)(t)^\perp}> 0. 
\end{align*}
\end{cor}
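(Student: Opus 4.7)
The plan is to revisit the proof of Theorem~\ref{thm_main_1} step by step and verify that each ingredient carries over with real analytic dependence on the parameter $\alpha \in \text{neigh}(\alpha_0,S^*X^{\mathrm{int}})$, thereby producing $v(x,\alpha;h)$ as a real analytic family of $C^\infty$ functions of $x$. The starting observation is that the Fermi coordinate construction recalled at the beginning of Section~\ref{sec:exp_accurate_quasimodes} is based on inverting the map $(t,y)\mapsto \exp_{\gamma(t)}(\sum y^k e_k(t))$, where $e_k(t)$ are parallel transports along $\gamma$. Since $\gamma(\alpha)$ depends real analytically on $\alpha$ (by hypothesis) and parallel transport and the exponential map of the real analytic metric $g$ are real analytic, we obtain Fermi coordinates $\kappa_\alpha$ around each $\gamma(\alpha)$ depending real analytically on $\alpha$, defined on a fixed (independent of $\alpha$) domain $I\times B$, at least on coordinate patches covering a neighborhood of $\gamma_0([-T_1,T_2])$ in $\hat X$.

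Next, I construct the phase $\varphi(x,\alpha)$ by repeating Steps~1--3 of Section~\ref{sec:exp_accurate_quasimodes} with the parameter $\alpha$ present throughout. In the $\alpha$-dependent Fermi coordinates one chooses a fixed initial datum $\psi(y)$ at $t=0$ satisfying $\text{Im}\,\psi''_{yy}(0)>0$; the compatibility conditions in~\eqref{eq_3_3} are then $\alpha$-dependent only through $\xi_0(\alpha)$, which is analytic in $\alpha$. Step~1 (solvability near a point) uses the holomorphic implicit function theorem applied to $p(\,\cdot\,,\,\cdot\,;\alpha)=|\xi|^2_{g(x,\alpha)}-1$ (where the $\alpha$ merely labels the Fermi coordinates), and the resulting isotropic manifold $\Lambda'(\alpha)$ together with its $H_p$-flowout $\Lambda(\alpha)$ depend holomorphically on $\alpha$. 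The positivity computation~\eqref{eq_3_13_1}--\eqref{eq_3_13}, the characterization~\eqref{eq_im_M_0_1} of $\Lambda_0\cap\R^{2n}$, and the transversality of $\Lambda_t(\alpha)$ to the fiber $F$ are all stable properties, so they persist for $\alpha$ near $\alpha_0$. Hence Step~2 yields a holomorphic generating function $\varphi(x,\alpha)$ in a complex neighborhood of $(\gamma(\alpha_0)([-T_1,T_2]),\alpha_0)$, and Step~3 transfers the properties~\eqref{eq_3_4} uniformly in $\alpha$.

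For the amplitude, I apply the nested neighborhood scheme of Lemmas~\ref{lem_holom_coor}--\ref{lem_analytic_symbol_2} to the transport equation~\eqref{eq_new_4_1_1} with holomorphic coefficients now depending also on $\alpha$. Since the sets $\Omega_s$ can be chosen independent of $\alpha$ (they are adapted to a fixed Fermi chart), the symbol norms $\|\cdot\|_\mu$ and the Neumann series argument producing the solution $a(x,\alpha;h)=\sum_k h^k a_k(x,\alpha)$ of~\eqref{eq_new_4_6_1} yield bounds $|a_k(x,\alpha)|\le C^{k+1}k^k$ uniformly in $\alpha\in\text{neigh}(\alpha_0,S^*X^{\mathrm{int}})$, together with joint holomorphic dependence of each $a_k$ on $(x,\alpha)$. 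Choosing $a|_{t=0}$ to be an $\alpha$-independent elliptic classical analytic symbol preserves ellipticity uniformly in $\alpha$. Truncating at $N(h)=[1/(heC)]$ and cutting off transversally as in~\eqref{eq:v_defined} produces the local piece of the quasimode with the uniform exponential error bound.

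Finally, since $\gamma_0$ does not self-intersect at $x_0$, there is a fixed neighborhood of $\alpha_x$ in which only one Fermi chart is relevant, so the local representation in the statement holds verbatim; away from $x_0$ I glue the finitely many pieces as in~\eqref{eq_open_cover_sup}--\eqref{eq_rep_vs_2}, noting that the self-intersection combinatorics of $\gamma(\alpha)$ is stable under small perturbation of $\alpha$, so the cover and the gluing partition of unity can be chosen independent of $\alpha$ near $\alpha_0$. The bounds~\eqref{eq_prop_gaussian_1_new} then follow from~\eqref{eq_4_3} uniformly in $\alpha$. The main obstacle I anticipate is bookkeeping the uniformity of the nested neighborhood estimates and the gluing domains in $\alpha$; this is addressed by shrinking the parameter neighborhood of $\alpha_0$ once at the end so that all finitely many local constructions are simultaneously valid with uniform constants.
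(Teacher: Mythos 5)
Your outline for the phase and amplitude construction (carrying the parameter $\alpha$ through the Fermi coordinates, the complex Hamilton--Jacobi equation, and the nested-neighborhood scheme) is the correct one and matches the paper's strategy. However, there is a genuine gap in the gluing step. You write that ``the self-intersection combinatorics of $\gamma(\alpha)$ is stable under small perturbation of $\alpha$, so the cover and the gluing partition of unity can be chosen independent of $\alpha$ near $\alpha_0$.'' This is false in general, and the paper explicitly devotes a remark right after the proof of Corollary~\ref{cor_thm_main_1} to pointing this out: when $\dim X > 2$, a self-intersection of $\gamma_0$ at a point $x_1=\gamma_0(t_1)=\gamma_0(t_2)$ can be \emph{unwound} by an arbitrarily small perturbation in the direction orthogonal to the plane spanned by $\dot\gamma_0(t_1)$ and $\dot\gamma_0(t_2)$, so the number $N(\alpha)$ of self-intersection times need not depend continuously on $\alpha$. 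Relying on stability of the intersection pattern is therefore an argument that would fail.

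What the paper does instead is weaker and sufficient: it establishes only a \emph{uniform upper bound} $N(\alpha)\le N_0$ for $\alpha$ near $\alpha_0$, by an injectivity argument that maps self-intersection pairs $(t,s)$ into pairs of subintervals of length less than the injectivity radius (equation~\eqref{eq_cardinality}), and then invokes the inspection of~\cite[Lemma 3.5]{DKuLS_2016} to get $\alpha$-dependent Fermi covers $\{(U_j(\alpha),\kappa_j(\alpha))\}_{j=0}^{N(\alpha)+1}$ with uniformly controlled parameters. The construction of $v(\cdot,\alpha;h)$ is then carried out for each $\alpha$ separately along the scheme of Theorem~\ref{thm_main_1}; the estimates~\eqref{eq_prop_gaussian_1_new} are uniform in $\alpha$ because the cover and the symbol bounds are uniform, and the real-analytic dependence that is actually used downstream is only the local one near $\alpha_x$, which is unproblematic precisely because $\gamma(\alpha)$ has no self-intersection at $\alpha_x$ by hypothesis. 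To repair your argument, you should replace the stability claim by the uniform-boundedness claim and restrict the uniform real-analyticity statement to a fixed neighborhood of $\alpha_x$; the rest of your proposal then goes through.
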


\begin{proof}
The functions $T_1(\alpha)$ and $T_2(\alpha)$ depend continuously on $\alpha$ in a small neighborhood  of  $\alpha_0$, and shrinking the neighborhood further we may assume that $T_1(\alpha)$, $T_2(\alpha)$ are bounded.  Let $\varepsilon>0$ be such that $\gamma(\alpha)(t)\in 
\hat{X}\setminus X$ and $\gamma(\alpha)(t)$ has no self-intersection for $t\in [-T_1(\alpha)-2\varepsilon, -T_1(\alpha))\cup (T_2(\alpha),T_2(\alpha)+2\varepsilon]$ for all $\alpha\in \text{neigh}(\alpha_0,S^*X^{\mathrm{int}})$. 
This choice of $\varepsilon$ is possible since $\gamma(\alpha)$ are non-tangential and depend smoothly on $\alpha$.  
It follows from \cite[Lemma 7.2]{Kenig_Salo_APDE_2013} that $\gamma(\alpha)|_{[-T_1(\alpha)-\varepsilon,T_2(\alpha)+\varepsilon]}$ self-intersects only at finitely many times $t_j(\alpha)$, $1\leq j \leq N(\alpha)$, with 
\[
-T_1(\alpha)< t_1(\alpha)<\dots <t_{N(\alpha)}(\alpha)< T_2(\alpha).
\]

First we claim that there is $N_0$ such that $N(\alpha)\le N_0<\infty$ for all $\alpha$ in a small neighborhood of $\alpha_0$. 
This follows by inspection of the arguments in the proof of \cite[Lemma 7.2]{Kenig_Salo_APDE_2013}. Indeed, as explained in \cite[Lemma 7.2]{Kenig_Salo_APDE_2013}, if $\gamma(\alpha)(t)=\gamma(\alpha)(s)$ for some $t\ne s$ then $\dot{\gamma}(\alpha)(t)\ne \pm \dot{\gamma}(\alpha)(s)$.  Furthermore, if $r$ is smaller than the injectivity radius of some closed manifold containing a fixed neighborhood of $X \cup \gamma(\alpha)([-T_1(\alpha) - 2\varepsilon, T_2(\alpha) + 2\varepsilon]$ for $\alpha\in \text{neigh}(\alpha_0,S^*X^{\mathrm{int}})$,  then any two distinct geodesic segments of length $\le r$ can intersect in at most one point. Partitioning $[-T_1(\alpha)-2\varepsilon, T_2(\alpha)+2\varepsilon]$ in disjoint intervals $\{J_k\}_{k=1}^{K(\alpha)}$ of length $\le r$, we get an injective map
\begin{equation}
\label{eq_cardinality}
\begin{aligned}
&\{ (t,s)\in [-T_1(\alpha)-2\varepsilon, T_2(\alpha)+2\varepsilon]^2: s<t \text{ and }\gamma(\alpha)(t)=\gamma(\alpha)(s)\}  \\
&\qquad\qquad\qquad\qquad \longrightarrow \{ (k,l)\in \{1,\dots, K(\alpha)\}^2\},\\
&\quad\quad\qquad\qquad (t,s) \mapsto (k,l) \text{ such that } t\in J_k, s\in J_l. 
\end{aligned} 
\end{equation}
Since $T_1(\alpha)$ and $T_2(\alpha)$ are bounded for $\alpha$ in a small neighborhood of $\alpha_0$, we may assume that $K(\alpha)$ is bounded. 
Consequently, the cardinality of the set $\{ (k,l)\in \{1,\dots, K(\alpha)\}^2\}$ is bounded uniformly in $\alpha$. 
%, for $\alpha$ in a small neighborhood of $\alpha_0$, 
The claim follows.

We also set $t_0(\alpha):=-T_1(\alpha)-\varepsilon$ and $t_{N+1}(\alpha):=T_2(\alpha)+\varepsilon$.
An inspection of the proof of \cite[Lemma 3.5]{DKuLS_2016} allows us to conclude that  there exists an open cover $\{(U_j(\alpha),\kappa_j(\alpha))\}_{j=0}^{N(\alpha)+1}$ of $\gamma(\alpha)([-T_1(\alpha)-\varepsilon,T_2(\alpha)+\varepsilon])$ consisting of coordinate neighborhoods $U_j(\alpha)$ and real analytic diffeomorphisms $\kappa_j(\alpha)$, depending real analytically on $\alpha$, such that the following properties hold, 
\begin{itemize}
\item[(i)] $\kappa_j(\alpha)(U_j(\alpha))=I_j\times B$, where $I_j$ are fixed open intervals and $B=B(0,\delta')$ is an open ball in $\R^{n-1}$.  Here $\delta'>0$  can be taken arbitrarily small and the same for each $U_j(\alpha)$, uniformly for $\alpha$ close to $\alpha_0$, 
\item[(ii)] $\kappa_j(\alpha)(\gamma(\alpha)(t))=(t,0)$ for each $t\in I_j$,
\item[(iii)] $t_j$ only belongs to $I_j$ and $\overline{I_j}\cap \overline{I_k}=\emptyset$ unless $|j-k|\le 1$,
\item[(iv)] $\kappa_j(\alpha)=\kappa_k(\alpha)$ on $\kappa_j^{-1}((I_j\cap I_k)\times B)$.
\end{itemize}
In particular,  the open sets $U_j(\alpha)$ are bounded uniformly in $\alpha$ and contain a fixed open set. 

Following the proof of Theorem \ref{thm_main_1}, and making use of the fact that the geodesics $\gamma(\alpha)$ do not have self-intersections at $\alpha_{x}$, for $\alpha$ close to $\alpha_0$, we obtain the statement of Corollary \ref{cor_thm_main_1}.
\end{proof}

\begin{rem}
Let us also note that in general the number of self-intersecting times $N(\alpha)$ need not depend continuously on $\alpha$. To this end, assume that the dimension of the manifold $X$ is $>2$ and that the geodesic $\gamma_0$ in $X$ has a self-intersection at some point $x_1\in \gamma_0((-T_1, T_2))$ so that $x_1 = \gamma_0(t_1) = \gamma_0(t_2)$, $t_1<t_2$. Then one can show that by means of a small perturbation, that one can unwind the loop in the direction orthogonal to the plane spanned by the velocity vectors  $\dot{\gamma}(t_1)$ and $\dot{\gamma}(t_2)$. 

\end{rem}

\section{Construction of  families of harmonic functions based on Gaussian beam quasimodes}

\label{sec_CGO_based_quasi}

Let $(M,g)$ be a transversally anisotropic manifold of dimension $n\ge 3$ with transversal manifold $(M_0,g_0)$, and assume that $M_0^{\mathrm{int}}$ and $g_0|_{M_0^{\mathrm{int}}}$ are real analytic.

First assume, as we may,  that $(M,g)$ is embedded in a compact smooth manifold $(N,g)$ without boundary of the same dimension, and let $U$ be open in $N$ such that $M\subset U$.  Our starting point is the following Carleman estimate for $-h^2\Delta$, established in \cite{DKSU_2009}.

\begin{prop}
Let $\phi$ be a limiting Carleman weight for $-h^2\Delta$ on  $U$. Then for all $0<h\ll 1$, we have
\begin{equation}
\label{eq_Car_for_laplace}
h\|u\|_{L^2(N)}\le C\|e^{\frac{\phi }{h}}(-h^2\Delta)e^{-\frac{\phi}{h}}  u\|_{L^2(N)}, \quad C>0,
\end{equation}
for all $u\in C_0^\infty(M^\mathrm{int})$. 
\end{prop}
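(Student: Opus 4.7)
I would follow the standard semiclassical Carleman argument combined with the convexification technique that is characteristic of limiting Carleman weights, as in \cite{DKSU_2009}. The plan has four steps.

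First, I conjugate the operator. Writing $P_\phi := e^{\phi/h}(-h^2\Delta_g)e^{-\phi/h}$, a direct computation yields the semiclassical principal symbol
\[
p_\phi(x,\xi) = |\xi|_g^2 - |d\phi|_g^2 + 2i\langle \xi, d\phi\rangle_g.
\]
Decompose $P_\phi = A + iB$ into its formally self-adjoint pieces, so that $A$ and $B$ have real principal symbols $a = |\xi|_g^2 - |d\phi|_g^2$ and $b = 2\langle \xi, d\phi\rangle_g$. For $u \in C^\infty_0(M^{\mathrm{int}})$, integration by parts gives the fundamental identity
\[
\|P_\phi u\|_{L^2}^2 = \|Au\|_{L^2}^2 + \|Bu\|_{L^2}^2 + (i[A,B]u,u)_{L^2}.
\]

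Second, I analyze the commutator. Its principal symbol is $h\{a,b\}$. The defining feature of a \emph{limiting} Carleman weight is precisely that $\{a,b\}$ vanishes on the joint zero set $\{a = b = 0\}$, so $\{a,b\}$ carries no sign there and no direct positivity is available. This is the main obstacle and the reason a plain H\"ormander-type argument fails.

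Third, I introduce a convex perturbation. Replace $\phi$ by $\phi_\varepsilon := \phi + \tfrac{h}{2\varepsilon}\phi^2$ for a small fixed $\varepsilon > 0$, with associated self-adjoint pieces $A_\varepsilon$, $B_\varepsilon$ and symbols $a_\varepsilon, b_\varepsilon$. A symbol calculation shows that the principal symbol of $i[A_\varepsilon, B_\varepsilon]$ acquires an additional non-negative summand proportional to $\tfrac{h}{\varepsilon}(a_\varepsilon^2 + b_\varepsilon^2)$ plus a term of order $h^2/\varepsilon$ that is strictly positive. The sharp semiclassical G\r{a}rding inequality then supplies
\[
(i[A_\varepsilon, B_\varepsilon]u, u)_{L^2} \ge \frac{h}{C\varepsilon}\bigl(\|A_\varepsilon u\|_{L^2}^2 + \|B_\varepsilon u\|_{L^2}^2\bigr) + \frac{h^2}{C\varepsilon}\|u\|_{L^2}^2 - C h^2\|u\|_{L^2}^2.
\]
For $\varepsilon$ small but fixed and $h$ small, the $h^2/\varepsilon$ term absorbs the error $Ch^2\|u\|^2$, and inserting into the fundamental identity gives the intermediate estimate $h\|u\|_{L^2} \le C\|P_{\phi_\varepsilon}u\|_{L^2}$ for all $u \in C^\infty_0(M^{\mathrm{int}})$.

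Fourth, I transfer back to the original weight. Given $u \in C^\infty_0(M^{\mathrm{int}})$, set $w := e^{\phi^2/(2\varepsilon)}u \in C^\infty_0(M^{\mathrm{int}})$. Since $e^{-\phi_\varepsilon/h}w = e^{-\phi/h}u$, a direct calculation yields
\[
P_{\phi_\varepsilon}w = e^{\phi^2/(2\varepsilon)}\, P_\phi u.
\]
For $\varepsilon$ fixed the factor $e^{\phi^2/(2\varepsilon)}$ is bounded above and below on the compact manifold $M$, so $\|w\|_{L^2} \asymp \|u\|_{L^2}$ and $\|P_{\phi_\varepsilon}w\|_{L^2} \asymp \|P_\phi u\|_{L^2}$; applying the intermediate estimate to $w$ then yields \eqref{eq_Car_for_laplace}. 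The hardest technical step is the commutator calculation in Step 3, where one must track the positive square contribution generated by the convexification together with its subprincipal errors closely enough to secure the net gain of size $h$.
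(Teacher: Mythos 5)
The paper does not prove this Carleman estimate; it states the result and defers entirely to \cite{DKSU_2009}, where the argument you describe is carried out. Your four-step plan---conjugation and the $A+iB$ decomposition with $a=|\xi|_g^2-|d\phi|_g^2$, $b=2\langle\xi,d\phi\rangle_g$; identification of the obstruction (vanishing of $\{a,b\}$ on $\{a=b=0\}$); convexification $\phi_\varepsilon = \phi + \frac{h}{2\varepsilon}\phi^2$; and transfer back via the $h$-independent factor $e^{\phi^2/(2\varepsilon)}$, bounded above and below on compact $M$---is precisely the proof in \cite{DKSU_2009}, and Steps 1, 2 and 4 are correct as written.

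Step 3 contains a technical misstatement which is worth flagging even though it does not change the strategy. Writing $\phi_\varepsilon = f\circ\phi$ with $f(t)=t+\frac{h}{2\varepsilon}t^2$, a direct computation (for the product weight $\phi=x_1$, $g=e\oplus g_0$) gives
\[
\{a_\varepsilon, b_\varepsilon\} = \frac{f''(\phi)}{f'(\phi)^2}\,b_\varepsilon^2 + 4\,f''(\phi)\,f'(\phi)^2,
\]
so the square that appears is $b_\varepsilon^2$ alone, with coefficient of size $h/\varepsilon$, together with a strictly positive constant of size $h/\varepsilon$; no $a_\varepsilon^2$ term arises. In the general LCW case the remaining contribution lies in the ideal generated by $a_\varepsilon$ and $b_\varepsilon$. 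After multiplying by the extra $h$ in the principal symbol of $i[A_\varepsilon,B_\varepsilon]$ and quantizing, the ideal part contributes terms of the form $h\bigl(\mathrm{Op}(\ell_1)A_\varepsilon u,u\bigr)+h\bigl(\mathrm{Op}(\ell_2)B_\varepsilon u,u\bigr)$, which by Cauchy--Schwarz yield \emph{negative} error terms $-\delta\bigl(\|A_\varepsilon u\|^2+\|B_\varepsilon u\|^2\bigr) - C_\delta h^2\|u\|^2$ that must be absorbed into the $\|A_\varepsilon u\|^2+\|B_\varepsilon u\|^2$ from the fundamental identity. Your displayed lower bound on $(i[A_\varepsilon,B_\varepsilon]u,u)$ therefore has the wrong sign in front of $\|A_\varepsilon u\|^2+\|B_\varepsilon u\|^2$, and the $\|A_\varepsilon u\|^2$ piece is spurious in any case. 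With these corrections the intermediate estimate $h\|u\|_{L^2}\le C\|P_{\phi_\varepsilon}u\|_{L^2}$ follows as you claim, and Step 4 completes the proof.
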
 
Using a standard argument, see \cite{DKSU_2009}, we convert the Carleman estimate \eqref{eq_Car_for_laplace} into the following solvability result. 

\begin{prop}
\label{prop_solvability}
Let $\phi$ be a limiting Carleman weight for $-h^2\Delta$ on $U$.  If  $h>0$ is small enough, then for any $v\in L^2(M)$, there is a solution $u\in L^2(M)$ of the equation 
\[
e^{\frac{\phi}{h}}(-h^2\Delta)e^{-\frac{\phi}{h}}  u=v \quad \text{in}\quad M^{\mathrm{int}},
\]
which satisfies 
\[
\|u\|_{L^2(M)}\le \frac{C}{h} \|v\|_{L^2(M)}.
\]
\end{prop}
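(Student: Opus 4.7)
The plan is to deduce Proposition \ref{prop_solvability} from the Carleman estimate \eqref{eq_Car_for_laplace} by the standard Hahn--Banach duality argument. The key point is that if $\phi$ is a limiting Carleman weight for $-h^2\Delta$, then so is $-\phi$, which means \eqref{eq_Car_for_laplace} also holds with $\phi$ replaced by $-\phi$. Moreover, the operator $P_{\phi}:=e^{\phi/h}(-h^2\Delta)e^{-\phi/h}$ has formal $L^2$-adjoint $P_{\phi}^{*}=P_{-\phi}$, since $-h^2\Delta$ is formally self-adjoint on $(N,g)$.

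First I would apply the Carleman estimate to $-\phi$: for $h$ sufficiently small,
\begin{equation*}
h\|w\|_{L^2(M)}\le C\|P_{-\phi}w\|_{L^2(M)}, \qquad w\in C_0^{\infty}(M^{\mathrm{int}}).
\end{equation*}
Given $v\in L^2(M)$, define a linear functional $L$ on the subspace $P_{-\phi}(C_0^{\infty}(M^{\mathrm{int}}))\subset L^2(M)$ by
\begin{equation*}
L\bigl(P_{-\phi}w\bigr):=(w,v)_{L^2(M)}, \qquad w\in C_0^{\infty}(M^{\mathrm{int}}).
\end{equation*}
The estimate above ensures $L$ is well defined (the map $w\mapsto P_{-\phi}w$ is injective) and bounded on its domain with norm at most $(C/h)\|v\|_{L^2(M)}$, since
\begin{equation*}
|L(P_{-\phi}w)|\le \|w\|_{L^2(M)}\|v\|_{L^2(M)}\le \frac{C}{h}\|P_{-\phi}w\|_{L^2(M)}\|v\|_{L^2(M)}.
\end{equation*}

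Next I would invoke the Hahn--Banach theorem to extend $L$ to a bounded linear functional on all of $L^2(M)$ with the same norm, and then the Riesz representation theorem to produce $u\in L^2(M)$ with $\|u\|_{L^2(M)}\le (C/h)\|v\|_{L^2(M)}$ such that $L(f)=(f,\overline{u})_{L^2(M)}$ for every $f\in L^2(M)$ (or, adjusting conjugations appropriately, $L(f)=(u,f)_{L^2(M)}$). Testing against $f=P_{-\phi}w$ for $w\in C_0^{\infty}(M^{\mathrm{int}})$ gives
\begin{equation*}
(u,P_{-\phi}w)_{L^2(M)}=(w,v)_{L^2(M)},
\end{equation*}
which is exactly the statement that $P_{\phi}u=v$ in $\mathcal{D}'(M^{\mathrm{int}})$, i.e.\ $e^{\phi/h}(-h^2\Delta)e^{-\phi/h}u=v$ in $M^{\mathrm{int}}$. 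Combined with the norm bound on $u$, this is the conclusion.

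The only subtle point is to ensure that the distributional identity obtained from testing against $w\in C_0^{\infty}(M^{\mathrm{int}})$ really gives solvability in $L^2(M)$ with the stated estimate; this is automatic once we note that $P_{\phi}$ applied to an $L^2$ function is a priori a distribution on $M^{\mathrm{int}}$, and the identity forces this distribution to coincide with the $L^2$ function $v$. There is no genuine analytic obstacle here — the result is a direct consequence of the Carleman estimate combined with duality.
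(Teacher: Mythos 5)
Your argument is correct and is exactly the ``standard argument'' the paper refers to (citing \cite{DKSU_2009}) without repeating it: apply the Carleman estimate with $-\phi$ (also a limiting Carleman weight), note that $P_{-\phi}$ is the formal adjoint of $P_\phi$ since $\phi$ is real and $-h^2\Delta$ is formally self-adjoint, and pass from the a priori estimate for the adjoint to existence for $P_\phi$ via Hahn--Banach and Riesz representation. The identification of the extended functional with an $L^2$ element $u$ and the distributional reading of $(u,P_{-\phi}w)=(w,v)$ as $P_\phi u=v$ in $\mathcal{D}'(M^{\mathrm{int}})$ are handled correctly, so there is nothing to add.
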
 

Now as $M\subset \subset \R\times  M_0^{\mathrm{int}}$, there is a compact Riemannian manifold $\tilde M_0$ of dimension $n-1$ with smooth boundary such that $M\subset\subset \R\times \tilde M_0\subset \subset \R\times M_0^{\mathrm{int}}$. Note that $(M_0^{\mathrm{int}},g_0|_{M_0^{\mathrm{int}}})$ is an open real analytic manifold with real analytic metric, and we can use Corollary \ref{cor_thm_main_1} to construct a real analytic family of Gaussian beam quasimodes along nontangential geodesics on $\tilde M_0$.

Let us write $x=(x_1, x')$ for local coordinates in $\R\times \tilde M_0$. Let 
\[
s=\frac{1}{h}+i\lambda, \quad  \lambda\in \R, \quad \lambda\quad  \text{fixed}. 
\]
Note by~\cite[Lemma 2.9]{DKSU_2009} that $\phi(x)=\pm  x_1$  is a limiting Carleman weight for $-h^2\Delta$ on $U$. 
 We are interested in finding harmonic functions, 
 \begin{equation}
\label{eq_CGO_6}
-\Delta_{ g} u=0\quad \text{in}\quad M^{\mathrm{int}},
\end{equation}
having the form
\[
 u=u(x,\alpha;h)=e^{-s x_1}(v(x',\alpha;h)+r(x,\alpha;h)),
\]
where $v=v(x', \alpha; h)$ is the Gaussian beam quasimode constructed in Corollary \ref{cor_thm_main_1} on the transversal manifold $\tilde M_0$, associated to a nontangential unit speed geodesic $\gamma(\alpha)$ on $\tilde M_0$ depending analytically on $\alpha\in \text{neigh}(\alpha_0, S^*\tilde M_0^{\mathrm{int}})$,  and $r$ is a remainder term.   
Thus,  $ u$  is a solution of \eqref{eq_CGO_6}
 provided that $r$ solves
 \begin{equation}
 \label{eq_cgo_conjug}
 e^{ \frac{ x_1}{h}} (-h^2\Delta_{ g}) e^{-\frac{ x_1}{h}}(e^{-i \lambda x_1}r) =- e^{-i  \lambda x_1} (-h^2\Delta_{g_0}-(hs)^2) v(x',\alpha ;h).
 \end{equation}
Proposition \ref{prop_solvability} and Corollary \ref{cor_thm_main_1} imply that there is $r=r(\ccdot;\alpha;h)\in L^2(M)$ solving \eqref{eq_cgo_conjug} such that 
\[
\|r\|_{L^2(M)}=\mathcal{O}(e^{-\frac{1}{Ch}}), \quad C>0, 
\]
as $h\to 0$, uniformly in $\alpha\in \text{neigh}(\alpha_0, S^*\tilde M_0^{\mathrm{int}})$.

To summarize, we have the following result. 

\begin{prop}
\label{prop_CGO_general_mnfld}  Let $s=\frac{1}{h}+i\lambda$ with $\lambda\in \R$ being fixed.  For all $h>0$ small enough, there are families of harmonic functions $u_1,u_2\in L^2(M)$, i.e. $-\Delta_g u_j=0$ in $M^{\mathrm{int}}$, having the form
\begin{align*}
& u_1(x,\alpha;h)=e^{-s x_1}(v(x',\alpha;h)+r_1(x,\alpha;h)),\\
& u_2(x,\alpha;h)=e^{s x_1}(v(x',\alpha;h)+r_2(x,\alpha;h)),
\end{align*}
where $v=v(\ccdot,\alpha; h)$ is the family of  Gaussian beam quasimodes constructed in Corollary \ref{cor_thm_main_1}  on $\tilde M_0$,  and $r\in L^2(M)$ is such that $ \|r_j\|_{L^2(M)}=\mathcal{O}(e^{-\frac{1}{Ch}})$, $C>0$, as $h\to 0$, uniformly in $\alpha\in \text{neigh}(\alpha_0, S^*\tilde M_0^{\mathrm{int}})$,  $j=1,2$. 
\end{prop}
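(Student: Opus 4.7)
The plan is to implement the construction already outlined in the paragraphs preceding the proposition, which reduces the problem to a direct application of Corollary~\ref{cor_thm_main_1} and Proposition~\ref{prop_solvability}. First I would make the ansatz $u_1(x,\alpha;h)=e^{-sx_1}(v(x',\alpha;h)+r_1(x,\alpha;h))$ with $s=1/h+i\lambda$, where $v$ is the analytic family of exponentially accurate Gaussian beam quasimodes on the auxiliary real analytic transversal manifold $\tilde M_0$ (chosen so that $M\subset\subset \R\times \tilde M_0\subset\subset \R\times M_0^{\mathrm{int}}$) provided by Corollary~\ref{cor_thm_main_1}. Using the product structure $g=e\oplus g_0$ and the fact that $v$ depends only on the transversal variable $x'$, a short direct computation shows that $-\Delta_g u_1 = 0$ in $M^{\mathrm{int}}$ is equivalent to $r_1$ satisfying equation~\eqref{eq_cgo_conjug}, whose inhomogeneity is precisely $-e^{-i\lambda x_1}(-h^2\Delta_{g_0}-(hs)^2)v(x',\alpha;h)$.

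The next step is to solve~\eqref{eq_cgo_conjug}. Since $\phi(x)=x_1$ is a limiting Carleman weight for $-h^2\Delta$ on $U$ by \cite[Lemma~2.9]{DKSU_2009}, Proposition~\ref{prop_solvability} applied to the weight $\phi=x_1$ produces a solution $w:=e^{-i\lambda x_1}r_1\in L^2(M)$ with
\[
\|r_1\|_{L^2(M)}=\|w\|_{L^2(M)}\le \frac{C}{h}\|(-h^2\Delta_{g_0}-(hs)^2)v(\cdot,\alpha;h)\|_{L^2(M)}.
\]
By the exponentially small error bound in Corollary~\ref{cor_thm_main_1}, the right-hand side is $\mathcal{O}(h^{-1}e^{-1/(Ch)})$, which is absorbed into $\mathcal{O}(e^{-1/(C'h)})$ for some $C'>0$. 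Since the Carleman weight $\phi=x_1$ is independent of $\alpha$, and the quasimode bounds in Corollary~\ref{cor_thm_main_1} are uniform in $\alpha\in\text{neigh}(\alpha_0,S^*\tilde M_0^{\mathrm{int}})$, the resulting estimate on $r_1$ is also uniform in $\alpha$.

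For $u_2(x,\alpha;h)=e^{sx_1}(v(x',\alpha;h)+r_2(x,\alpha;h))$ the construction is entirely symmetric: I would repeat the derivation with the opposite sign in the exponential, which yields the conjugated equation with the weight $\phi(x)=-x_1$ in place of $+x_1$. Since $\pm x_1$ are both limiting Carleman weights on $U$, Proposition~\ref{prop_solvability} applies verbatim and furnishes $r_2\in L^2(M)$ with $\|r_2\|_{L^2(M)}=\mathcal{O}(e^{-1/(C'h)})$, uniformly in $\alpha$. I do not anticipate any real obstacle: the exponentially small error in the quasimode construction is exactly matched against the loss of $h^{-1}$ coming from the Carleman solvability, and the $\alpha$-dependence is harmless because the weight and the constant in the Carleman estimate do not involve $\alpha$. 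The only point requiring a moment of care is to verify that the restriction of the quasimode $v(\cdot,\alpha;h)$, a priori defined as a $C^\infty$ function on $\tilde M_0$, makes sense as an inhomogeneity on $M$ — which is immediate from the inclusion $M\subset\subset\R\times\tilde M_0$ combined with the confinement of $\supp v(\cdot,\alpha;h)$ to a small neighborhood of the corresponding geodesic.
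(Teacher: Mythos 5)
Your proposal is correct and follows essentially the same route as the paper: the same ansatz and reduction to the conjugated equation \eqref{eq_cgo_conjug}, the same application of Proposition~\ref{prop_solvability} with the limiting Carleman weights $\phi=\pm x_1$, and the same observation that the $h^{-1}$ loss is absorbed by the exponential smallness of the quasimode error, uniformly in $\alpha$. There is nothing to add beyond what you wrote.
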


\section{Proofs of Theorem \ref{thm_main_2} and Theorem  \ref{thm_main}}

\label{sec_proofs_of_main_theorems}

\subsection{Some facts about analytic wave front sets}
We shall rely on the following characterization of the analytic wave front set, which we recall from \cite[Definition 6.1]{Sjostrand_Analytic} for the convenience of the reader. In our applications, we have $m=n-1$.

\begin{deff}
\label{def_wave_front_set}
Let $\alpha_0=(x_0,\xi_0)\in T^*\R^m\setminus\{0\}$, and let $\varphi(x,\alpha)$, $x\in \R^m$, $\alpha=(\alpha_x,\alpha_\xi)\in T^*\R^m\setminus\{0\}$,  be analytic defined in a neighborhood of  $(x_0,\alpha_0)$ such that  
\begin{equation}
\label{eq_5_1}
\varphi(x,\alpha)|_{x=\alpha_x}=0,\quad \varphi'_x(x,\alpha)|_{x=\alpha_x}=\alpha_\xi, 
\end{equation}
and 
\begin{equation}
\label{eq_5_2}
\emph{\text{Im}}\, \varphi(x,\alpha)\ge C_0|x-\alpha_x|^2, \quad x, \alpha \text{ real},
\end{equation}
for some $C_0>0$. 
Let $a(x,\alpha;h)$ be an elliptic classical analytic symbol defined in a neighborhood of  $(x_0,\alpha_0)$, and let $u\in \mathcal{D}'(X)$, where $X\subset \R^m$ is an open set containing $x_0$. We have $\alpha_0 \notin \emph{\text{WF}}_a(u)$ if and only if there is a real neighborhood $U$ of $\alpha_0$ and $C>0$ such that 
\begin{equation}
\label{eq_5_3}
\sup_{\alpha\in U}|Tu(\alpha;h)|\le Ce^{-\frac{1}{Ch}},
\end{equation}
for $0<h\le 1$, where 
\[
Tu(\alpha;h)=
\int e^{\frac{i\varphi(x,\alpha)}{h}}a(x,\alpha;h)\chi(x)\overline{u(x)}dx,
\]
and $\chi\in C^\infty_0(X)$ is supported in a small neighborhood of  $x_0$ and $\chi=1$ near $x_0$. 
\end{deff}

\begin{rem}
It is established in \cite[Proposition 6.2]{Sjostrand_Analytic} that the condition \eqref{eq_5_3} is independent of the choice of $\chi$, $a$, and $\varphi$. 
\end{rem}

\begin{rem}
\label{rem_wave_front_set}
Assume that $\varphi$, $a$, and $u$ satisfy the same conditions as in Definition \ref{def_wave_front_set}, and let $\psi\in C^\infty_0(\R^n)$ be supported in a small neighborhood of $0$ and  $\psi=1$ near $0$. 
We have $\alpha_0 \notin \emph{\text{WF}}_a(u)$ if and only if there is a real neighborhood $\tilde U$ of $\alpha_0$ and $\tilde C>0$ such that 
\begin{equation}
\label{eq_5_3_new_10}
\sup_{\alpha\in \tilde U}|\tilde Tu(\alpha;h)|\le \tilde Ce^{-\frac{1}{\tilde Ch}},
\end{equation}
for $0<h\le 1$, where 
\[
\tilde Tu(\alpha;h)=
\int e^{\frac{i\varphi(x,\alpha)}{h}}a(x,\alpha;h)\psi(x-\alpha_x)\overline{u(x)}dx.
\]
The condition \eqref{eq_5_3_new_10} is independent of the choice of $\psi$. 
\end{rem}

\begin{rem} 
\label{rem_wave_front_set_1}
The condition \eqref{eq_5_1} in Definition \ref{def_wave_front_set} and Remark \ref{rem_wave_front_set} can be replaced by the following
\begin{equation}
\label{eq_5_1_new}
\varphi(x,\alpha)|_{x=\alpha_x}=f(\alpha)\text{ real valued},\quad \varphi'_x(x,\alpha)|_{x=\alpha_x}=t_0\alpha_\xi, 
\end{equation}
for some fixed $t_0>0$. Indeed, we apply Definition \ref{def_wave_front_set} and Remark \ref{rem_wave_front_set} with  $\varphi(x,\alpha)$ replaced by  $\frac{1}{t_0}(\varphi(x,\alpha)-f(\alpha))$ and with $h$ replaced by $h/t_0$. 
\end{rem}

\begin{rem}
Since the wave front set $\text{WF}_a(u)$ is conic, we may restrict the attention in Definition  \ref{def_wave_front_set}  to $\xi_0\in \R^{m}$ such that $|\xi_0|=1$. 
\end{rem}

\subsection{Proof of Theorem \ref{thm_main_2}}

Let $\alpha_0=(x_0',\xi_0')\in S^*M_0^{\mathrm{int}}$ be generated by an admissible pair of geodesics $\gamma_1(\alpha_0):[-T_1(\alpha_0), T_2(\alpha_0)]\to M_0$ and $\gamma_2(\alpha_0):[-S_1(\alpha_0), S_2(\alpha_0)]\to M_0$. As $M\subset \subset \R\times  M_0^{\mathrm{int}}$, there is a compact Riemannian manifold $\tilde M_0$ of dimension $n-1$ with smooth boundary such that $M\subset\subset \R\times \tilde M_0\subset \subset \R\times M_0^{\mathrm{int}}$, and $x_0'\in \tilde M_0^{\mathrm{int}}$. Furthermore, we can choose $\tilde M_0$ so that the geodesics $\gamma_1(\alpha_0)$ and $\gamma_2(\alpha_0)$ are nontangential on $\tilde M_0$, and hence, $\gamma_1(\alpha_0)$ and $\gamma_2(\alpha_0)$ are admissible on $\tilde M_0$.

Then by Lemma \ref{lem_geodesic_2},  there exists a neighborhood $V$ of $\alpha_0$ in $S^*\tilde M_0^{\mathrm{int}}$ such that every point $\alpha=(\alpha_{x'},\alpha_{\xi'})\in V$ is generated by an admissible pair of geodesics $\gamma_1(\alpha):[-T_1(\alpha), T_2(\alpha)]\to \tilde M_0$ and $\gamma_2(\alpha):[-S_1(\alpha), S_2(\alpha)]\to \tilde M_0$,  which depend real-analytically on $\alpha$. Thus, for all $\alpha\in V$, we have 
\begin{equation}
\label{eq_5_3_2}
\gamma_1(\alpha)(0)=\gamma_2(\alpha)(0)=\alpha_{x'},
\end{equation}
\begin{equation}
\label{eq_5_3_3}
\dot{\gamma}_1(\alpha)(0)+ \dot{\gamma}_2(\alpha)(0)=t_0\alpha_{\xi'},
\end{equation}
for some $0<t_0<2$ fixed, $\gamma_1(\alpha)$, $\gamma_2(\alpha)$ do not have self-intersections at $\alpha_{x'}$,  and $\alpha_{x'}$ is the only point where $\gamma_1(\alpha)$ and $\gamma_2(\alpha)$ intersect, for all $\alpha\in V$.

Let $s_1=\frac{1}{h}+i\lambda$ and $s_2=\frac{1}{h}$, where $\lambda\in \R$. Applying Corollary \ref{cor_thm_main_1}, we get  $v_j(x',\alpha;h)$, $j=1,2$,  Gaussian beam quasimodes on $\tilde M_0$, associated to $\gamma_j(\alpha)$  on $\tilde M_0$,  depending real analytically on $\alpha\in V$  such that 
\begin{equation}
\label{eq_5_3_3_1}
 \|v_j (\ccdot,\alpha;h)\|_{L^2(M)}\asymp 1, \quad \| (-h^2\Delta_{g_0}-(hs)^2)v_j(\ccdot,\alpha;h)\|_{L^2(M)}=\mathcal{O}(e^{-\frac{1}{Ch}}), 
\end{equation}
as $h\to 0$, for some $C>0$, uniformly in $\alpha\in  V$.

An application of Proposition \ref{prop_CGO_general_mnfld} gives harmonic functions on $M$ having the form
\begin{equation}
\label{eq_5_4}
\begin{aligned}
&u_1(x,\alpha;h)=e^{-s_1x_1}(v_1(x',\alpha;h)+r_1(x,\alpha;h)),\\
&u_2(x,\alpha;h)=e^{s_2x_1}(v_2(x',\alpha;h)+r_2(x,\alpha;h)),
\end{aligned}
\end{equation}
where  
\begin{equation}
\label{eq_5_5}
 \|r_j\|_{L^2(M)}=\mathcal{O}(e^{-\frac{1}{Ch}}),\quad  C>0,
\end{equation} 
as $h\to 0$, uniformly in $\alpha\in  V$.

Substituting the harmonic functions $u_1$ and $u_2$ given by \eqref{eq_5_4} into \eqref{eq_int_orthog}, we get 
\begin{equation}
\label{eq_5_6}
\int_M fe^{-i\lambda x_1} (v_1(x',\alpha;h)+r_1)(v_2(x',\alpha;h)+r_2)dV_g=0.
\end{equation}
Using \eqref{eq_5_5} and \eqref{eq_5_3_3_1}, we see that 
\begin{equation}
\label{eq_5_6_1}
\int_M fe^{-i\lambda x_1} v_1(x',\alpha;h)v_2(x',\alpha;h)dV_g=\mathcal{O}(e^{-\frac{1}{Ch}}),\quad  C>0,
\end{equation}
uniformly in $\alpha\in V$. Let us extend $f\in L^\infty(M)$ by zero to $(\R\times M_0)\setminus M$ and set 
\[
\hat f(\lambda, x')=\int_{-\infty}^\infty e^{-i\lambda x_1} f(x_1,x')dx_1
\]
for the Fourier transform with respect to $x_1$.  Using the fact that $dV_g=dx_1dV_{g_0}$, we obtain from  \eqref{eq_5_6_1}  that 
\begin{equation}
\label{eq_5_7}
\int_{\tilde M_0} \hat f(\lambda,x') v_1(x',\alpha;h)v_2(x',\alpha;h)dV_{g_0}=\mathcal{O}(e^{-\frac{1}{Ch}}),\quad  C>0,
\end{equation}
uniformly in $\alpha\in V$. Recalling that the geodesics $\gamma_1(\alpha)$ and $\gamma_2(\alpha)$ intersect at $\alpha_{x'}$ only and that   
\[
\supp(v_j(\ccdot,\alpha;h))\subset \text{small }\text{neigh}(\gamma_j(\alpha)), \quad j=1,2, 
\]
we conclude from \eqref{eq_5_7} that 
\begin{equation}
\label{eq_5_8}
\int_{\text{neigh}(\alpha_{x'}, M_0)} \hat f(\lambda,x') v_1(x',\alpha;h)v_2(x',\alpha;h)\sqrt{g_0(x')}dx'=\mathcal{O}(e^{-\frac{1}{Ch}}), 
\end{equation}
uniformly in $\alpha\in V$. 
%Note that we can take the neighborhood of $\alpha_{x'}$ occurring as the domain of integration in \eqref{eq_5_8} arbitrarily small but fixed.
 Recalling that the geodesics $\gamma_1(\alpha)$, $\gamma_2(\alpha)$ do not have self-intersections at $\alpha_{x'}$, by 
Corollary \ref{cor_thm_main_1}, we have in a small neighborhood of $\alpha_{x'}$, 
\begin{equation}
\label{eq_5_9}
\begin{aligned}
&v_1(x',\alpha;h)=h^{-\frac{(n-2)}{4}}e^{is_1\varphi_1(x',\alpha)}a_1(x',\alpha;h),\\
&v_2(x',\alpha;h)=h^{-\frac{(n-2)}{4}}e^{is_2\varphi_2(x',\alpha)}a_2(x',\alpha;h).
\end{aligned}
\end{equation}
Here $\varphi_j(x',\alpha)$ are real analytic in $(x',\alpha)$ in a region of  the form $\alpha\in V$ and $|x'-\alpha_{x'}|<1/c$, which is an open neighborhood of  $(x_0',\alpha_0)$. Furthermore, $a_j(x', \alpha;h)$ are elliptic classical analytic symbols in a neighborhood of  $(x_0',\alpha_0)$,  $j=1,2$. It follows that the neighborhood of $\alpha_{x'}$ occurring as the domain of integration in \eqref{eq_5_8} can be taken to be fixed and independent of $\alpha$.
We also have for the geodesic parameters $t$ and $s$ near $0$ that %$t$ near $0$ and $s$ near $0$,
\begin{equation}
\label{eq_5_10}
\begin{aligned}
&(\varphi_1)'_{x'}(\gamma_1(\alpha)(t),\alpha)=\dot{\gamma}_1(\alpha)(t), \quad \text{Im}\, ((\varphi_1)''_{x'x'}(\gamma_1(\alpha)(t), \alpha))\ge 0, \\
&\text{Im}\, ((\varphi_1)''_{x'x'}(\gamma_1(\alpha)(t), \alpha)|_{[\dot{\gamma}_1(\alpha)(t)]^\perp})
>0,
\end{aligned}
\end{equation}
and
\begin{equation}
\label{eq_5_11}
\begin{aligned}
&(\varphi_2)'_{x'}(\gamma_2(\alpha)(s),\alpha)=\dot{\gamma}_2(\alpha)(s), \quad \text{Im}\, ((\varphi_2)''_{x'x'}(\gamma_2(\alpha)(s), \alpha))\ge 0, \\
&\text{Im}\, ((\varphi_2)''_{x'x'}(\gamma_2(\alpha)(s), \alpha)|_{[\dot{\gamma}_2(\alpha)(s)]^\perp})
>0.
\end{aligned}
\end{equation}

Now substituting \eqref{eq_5_9} into \eqref{eq_5_8}, we see that 
\begin{equation}
\label{eq_5_12}
\int_{\text{neigh}(\alpha_{x'}, M_0)} e^{\frac{i\varphi(x',\alpha)}{h}}\hat f(\lambda,x') a(x',\alpha;h) dx'=\mathcal{O}(e^{-\frac{1}{Ch}}), \quad h\to 0, 
\end{equation}
uniformly in $\alpha\in V$. Here 
\begin{equation}
\label{eq_5_12_new}
\varphi(x',\alpha)=\varphi_1(x',\alpha)+\varphi_2(x',\alpha)
\end{equation}
is analytic in a neighborhood of $(x_0',\alpha_0)$, 
and 
\[
a(x',\alpha;h)=e^{-\lambda\varphi_1(x',\alpha)}a_1(x',\alpha;h)a_2(x',\alpha;h)\sqrt{g_0(x')}
\]
is an elliptic classical analytic symbol in a neighborhood of $(x_0',\alpha_0)$, since the product of two classical analytic symbols is a classical analytic symbol.

We now claim that the phase function $\varphi(x',\alpha)$ in \eqref{eq_5_12_new} satisfies the conditions \eqref{eq_5_1_new} and \eqref{eq_5_2}. First, in view of  \eqref{eq_5_3_2} and \eqref{eq_prop_gaussian_1_phase_100}, we have 
\begin{equation}
\label{eq_5_13}
\varphi(x',\alpha)|_{x'=\alpha_{x'}}=\varphi_1(\gamma_1(\alpha)(0),\alpha)+\varphi_2(\gamma_2(\alpha)(0),\alpha)=0.
\end{equation}
Using \eqref{eq_5_10}, \eqref{eq_5_11}, and \eqref{eq_5_3_3}, we get 
\begin{equation}
\label{eq_5_14}
\begin{aligned}
\varphi'_{x'}(x',\alpha)|_{x'=\alpha_{x'}}&= (\varphi_1)'_{x'}(\gamma_1(\alpha)(0),\alpha)+ (\varphi_2)'_{x'}(\gamma_2(\alpha)(0),\alpha)\\
&=\dot{\gamma}_1(\alpha)(0)+ \dot{\gamma}_2(\alpha)(0)=t_0\alpha_{\xi'}.
\end{aligned}
\end{equation}
It follows from \eqref{eq_5_13} and \eqref{eq_5_14} that the condition \eqref{eq_5_1_new} holds. Let us now check the condition \eqref{eq_5_2}. To this end, Taylor expanding $\varphi(x',\alpha)$ at $x'=\alpha_{x'}$, we get 
\begin{align*}
\varphi(x',\alpha)=&\varphi(\alpha_{x'},\alpha)+\varphi'_{x'}(\alpha_{x'},\alpha)\cdot (x'-\alpha_{x'})\\
&+\frac{1}{2}\varphi''_{x'x'}(\alpha_{x'},\alpha)(x'-\alpha_{x'})\cdot(x'-\alpha_{x'})+\mathcal{O}(|x'-\alpha_{x'}|^3), 
\end{align*}
and therefore, in view of \eqref{eq_5_13} and \eqref{eq_5_14}, when $x'$ and $\alpha$ are real, we see that 
\[
\text{Im}\, \varphi(x',\alpha)=\frac{1}{2}\text{Im}\, \varphi''_{x'x'}(\alpha_{x'},\alpha)(x'-\alpha_{x'})\cdot(x'-\alpha_{x'})+\mathcal{O}(|x'-\alpha_{x'}|^3).
\]
Hence, the condition \eqref{eq_5_2} is equivalent the following condition, 
\begin{equation}
\label{eq_5_15}
\text{Im}\, \varphi''_{x'x'}(\alpha_{x'},\alpha)>0. 
\end{equation}
Using \eqref{eq_5_10},  \eqref{eq_5_11}, and the fact that the vectors $\dot{\gamma}_1(\alpha)(0)$ and $\dot{\gamma}_2(\alpha)(0)$ are not parallel, we have
\[
\text{Im}\, \varphi''_{x'x'}(\alpha_{x'},\alpha)=\text{Im}\, (\varphi_1)''_{x'x'}(\gamma_1(\alpha)(0),\alpha)+ \text{Im}\, (\varphi_2)''_{x'x'}(\gamma_2(\alpha)(0),\alpha)>0,
\]
showing \eqref{eq_5_15}. Thus, by  Remarks \ref{rem_wave_front_set} and \ref{rem_wave_front_set_1}, in view of \eqref{eq_5_12}, we get 
$\alpha_0\notin \text{WF}_a(\overline{\hat f}(\lambda,\cdot))=\text{WF}_a(\hat{\overline{ f}}(-\lambda,\cdot))$ for all $\lambda\in \R$.  Noting that if \eqref{eq_int_orthog} holds for $f$, it also holds for $\overline{f}$ and $\lambda\in \R$ is arbitrary, we get $\alpha_0\notin \text{WF}_a(\hat f(\lambda,\cdot))$ for all $\lambda\in \R$. This completes the proof of Theorem \ref{thm_main_2}.

\subsection{Proof of Theorem \ref{thm_main}}

Now since every point $(x'_0, \xi'_0)\in S^*M_0^{\mathrm{int}}$ is generated by an admissible pair of geodesics, by Theorem \ref{thm_main_2}, we get $\hat f(\lambda,\cdot)$ is real-analytic in $ M_0^{\mathrm{int}}$ for all $\lambda\in \R$. The fact that  $\hat f(\lambda,\cdot)$ has a compact support in $ M_0^{\mathrm{int}}$  and that $M_0$ is connected implies that  $\hat f(\lambda,\cdot)=0$  for all $\lambda\in \R$, and therefore, $f=0$. This completes the proof of Theorem \ref{thm_main}.

\begin{appendix}

\section{Discussion related to Example \ref{ex_main}}
\label{app_example}

Let $M_0 = \mathbb{S}^1 \times [0,a]$, with $a > 0$, be a cylinder with its usual flat metric $g_0$.  The purpose of this Appendix is to show that every point $(x_0,\xi_0)\in S^*M_0^{\mathrm{int}}$ is generated by an admissible pair of geodesics. 

We have 
\[
T^*(\mathbb{S}^1\times (0,a))\simeq T^*\mathbb{S}^1\times T^*(0,a)\simeq (\mathbb{S}^1\times \R)\times ((0,a)\times\R)\simeq (\mathbb{S}^1\times(0,a))\times \R^2,
\]
and therefore, we may identify $S_{x_0}^*M_0^{\mathrm{int}}$ with the unit circle $\mathbb{S}^1$ in $\R^2\simeq\C$. 

Given $\mathbb{S}^1\ni \xi_0=(\xi_{01},\xi_{02})\simeq \xi_{01}+i\xi_{02}$, we set 
\begin{equation}
\label{eq_app_0}
\xi_1=e^{i\alpha}(\xi_{01}+i\xi_{02})\in \mathbb{S}^1, \quad \xi_2=e^{-i\alpha}(\xi_{01}+i\xi_{02})\in \mathbb{S}^1,
\end{equation}
with $\alpha\in (0,2\pi)$ to be chosen. The geodesics $\gamma_1$ and $\gamma_2$ on $M_0$ such that $\gamma_j(0)=x_0$ and $\dot \gamma_j(0)=\xi_j$, $j=1,2$, are given by 
\begin{align*}
\gamma_1(t)=(x_{01}+\xi_{11}t, x_{02}+\xi_{12}t)\in \R/2\pi\Z\times [0,a],\\
\gamma_2(s)=(x_{01}+\xi_{21}s, x_{02}+\xi_{22}s)\in \R/2\pi\Z\times [0,a].
\end{align*}
The geodesics $\gamma_1$ and $\gamma_2$ are nontangential provided that 
\begin{equation}
\label{eq_app_1}
\begin{aligned}
&\xi_{12}=\text{Im}\, (e^{i\alpha}(\xi_{01}+i\xi_{02}))=\xi_{02}\cos\alpha+\xi_{01}\sin\alpha\ne 0,\\
&\xi_{22}=\text{Im}\, (e^{-i\alpha}(\xi_{01}+i\xi_{02}))=\xi_{02}\cos\alpha-\xi_{01}\sin\alpha\ne 0.
\end{aligned}
\end{equation}
Note that if  $\gamma_1$ and $\gamma_2$ are nontangential then they do not have self-intersections.  

We have in view of \eqref{eq_app_0},
\begin{equation}
\label{eq_app_2}
\xi_1+\xi_2=(2\cos\alpha) \xi_0,
\end{equation}
and therefore, the property (ii) of Definition \ref{def_admissible} follows with $t_0=2\cos\alpha$, provided that 
\begin{equation}
\label{eq_app_3}
0<\cos\alpha<1. 
\end{equation}
Note that $\gamma_1$ and $\gamma_2$ intersect each other if there exist $t$ and $s$ such that 
\begin{equation}
\label{eq_app_4}
\xi_{11}t-\xi_{21}s\in 2\pi\Z,\quad 
\xi_{12}t=\xi_{22}s. 
\end{equation}
Now if we choose $\alpha$ so that 
\begin{equation}
\label{eq_app_5}
|\xi_{11}t-\xi_{21}s|<2\pi, \quad \xi_{12}t=\xi_{22}s, 
\end{equation}
then \eqref{eq_app_4} implies that $\xi_1t=\xi_2s$, and therefore, $|t|=|s|$. In view of \eqref{eq_app_2} and \eqref{eq_app_3}, we get $t=s=0$, and hence, $x_0$ is the only point of intersections of $\gamma_1$ and $\gamma_2$. 

To achive \eqref{eq_app_5}, assuming that \eqref{eq_app_1} holds,  we estimate 
\begin{align*}
|\xi_{11}t-\xi_{21}s|=\frac{|t|}{|\xi_{22}|}|\xi_{22}\xi_{11}-\xi_{21}\xi_{12}|=\frac{|t|}{|\xi_{22}|}|\text{Im}\,( \overline{\xi}_1\xi_2)|= \frac{|t|}{|\xi_{22}|}|\sin(2\alpha)|
\\ \le \frac{a}{|\xi_{12}||\xi_{22}|}|\sin(2\alpha)|
=\frac{a}{|\xi_{02}^2-\sin^2\alpha|}|\sin(2\alpha)|,
\end{align*}
where we use that $0\le x_{02}+\xi_{12}t\le a$, $0\le x_{02}\le a$, and \eqref{eq_app_1}.  Thus, to prove the result, we have to choose $\alpha\in (0,2\pi)$ so that \eqref{eq_app_1}, \eqref{eq_app_3}, and 
\begin{equation}
\label{eq_app_6}
\frac{a}{|\xi_{02}^2-\sin^2\alpha|}|\sin(2\alpha)|<2\pi
\end{equation}
hold. In doing so let us first consider the case when $\xi_{02}\ne 0$. In this case choosing $\alpha>0$ small enough, depending on $a$ and $\xi_{02}$, we see that \eqref{eq_app_1}, \eqref{eq_app_3}, and \eqref{eq_app_6} hold. When $\xi_{02}=0$, we choose $\alpha=\frac{\pi}{2}-\beta$ with $\beta>0$ small enough, depending on $a$. Then \eqref{eq_app_6} becomes
\[
\frac{a}{|\cos^2\beta|}|\sin(2\beta)|<2\pi,
\]
which together with  \eqref{eq_app_1}, \eqref{eq_app_3} hold for such small $\beta$. This completes the proof that every point of  $S^*M_0^{\mathrm{int}}$ is generated by an admissible pair of geodesics.
\end{appendix}

\end{document}